\theoremstyle{plain}
\newtheorem{theorem}{Theorem}[section]
\newtheorem{proposition}[theorem]{Proposition}
\newtheorem{lemma}[theorem]{Lemma}
\newtheorem{corollary}[theorem]{Corollary}
\newtheoremstyle{clm}% name of the style to be used
  {}% measure of space to leave above the theorem. E.g.: 3pt
  {}% measure of space to leave below the theorem. E.g.: 3pt
  {\itshape}% name of font to use in the body of the theorem
  {}% measure of space to indent
  {\rmfamily}% name of head font
  {\@.}% punctuation between head and body
  { }% space after theorem head; " " = normal interword space
  {}% Manually specify head
\theoremstyle{clm}
\numberwithin{claim}{theorem}
\numberwithin{equation}{section}
\numberwithin{table}{section}
\numberwithin{figure}{section}
\let\c@figure\c@table\makeatother
\theoremstyle{definition}
\newtheorem{definition}[theorem]{Definition}
\newtheorem{example}[theorem]{Example}
\newtheorem{remark}[theorem]{Remark}
\newcommand{\IN}{\ensuremath{\mathbb{N}}}
\newcommand{\ZZ}{\ensuremath{\mathbb{Z}}}
\newcommand{\RR}{\ensuremath{\mathbb{R}}}
\newcommand{\CC}{\ensuremath{\mathbb{C}}}
\newcommand{\nset}[1]{\ensuremath{[{#1}]}}
\newcommand{\divides}{\ensuremath{\mathrel{\vert}}}
\newcommand{\card}[1]{\ensuremath{\lvert{#1}\rvert}}
\newcommand{\alg}[1]{\ensuremath{\mathbf{#1}}}
\newcommand{\satisfies}{\models}
\newcommand{\eqD}[1]{\ensuremath{\sim^\mathrm{d}_{#1}}}
\newcommand{\eqL}[1]{\ensuremath{\sim^\mathrm{L}_{#1}}}
\newcommand{\eqR}[1]{\ensuremath{\sim^\mathrm{R}_{#1}}}
\newcommand{\eqLR}[2]{\ensuremath{\sim^\mathrm{LR}_{{#1},{#2}}}}
\newcommand{\eqabm}[3]{\ensuremath{\sim^\mathrm{lin}_{{#1},{#2},{#3}}}}
\newcommand{\eqabG}[3]{\ensuremath{\sim^{#3}_{{#1},{#2}}}}
\newcommand{\NeqD}[2]{\ensuremath{T^\mathrm{d}_{{#1},{#2}}}}
\newcommand{\NeqL}[2]{\ensuremath{T^\mathrm{L}_{{#1},{#2}}}}
\newcommand{\NeqR}[2]{\ensuremath{T^\mathrm{R}_{{#1},{#2}}}}
\newcommand{\NeqLR}[3]{\ensuremath{T^\mathrm{LR}_{{#1},{#2},{#3}}}}
\newcommand{\Neqabm}[4]{\ensuremath{T^\mathrm{lin}_{{#1},{#2},{#3},{#4}}}}
\newcommand{\NeqabG}[4]{\ensuremath{T^{#3}_{{#1},{#2},{#4}}}}
\newcommand{\FS}{\alg{FS}_{\rm linqgr}}
\DeclareMathOperator{\id}{id}
\DeclareMathOperator{\lcm}{lcm}
\DeclareMathOperator{\var}{var}
\DeclareMathOperator{\Aut}{Aut}
\DeclareMathOperator{\addr}{\alpha}
\DeclareMathOperator{\Sub}{Sub}
\newcommand{\comb}{\ensuremath{\mathrm{comb}}}
\newcommand{\ld}{\ensuremath{\lambda}}
\newcommand{\rd}{\ensuremath{\rho}}
\tikzstyle{puu}=[dot/.style={inner sep=0.0625cm, circle, draw}, every node/.style={dot,fill},
\tikzstyle{puup}=[dot/.style={inner sep=0.0625cm, circle, draw}, every node/.style={dot,fill},
\newcommand{\pairs}[3]{\ensuremath{\Lambda_{#1}(#2,#3)}}
\begin{document}
\title[Associativity conditions for linear quasigroups]{Associativity conditions for linear quasigroups and equivalence relations on binary trees}

\author{Erkko Lehtonen}

\address%
   {Department of Mathematics \\
    Khalifa University \\
    P.O. Box 127788 \\
    Abu Dhabi \\
    United Arab Emirates}

\author{Tam\'{a}s Waldhauser}

\address
   {Bolyai Institute \\
    University of Szeged \\
    Aradi v\'{e}rtan\'{u}k tere 1 \\
    H6720 Szeged \\
    Hungary}

\thanks{\thanks{This research was partially supported by the National Research, Development and Innovation Office of Hungary under grants no. K128042 and K138892, and by project TKP2021-NVA-09, implemented with the support provided by the Ministry of Innovation and Technology of Hungary from the National Research, Development and Innovation Fund, financed under the TKP2021-NVA funding scheme.}
\date{\today}
}

\date{\today}

\begin{abstract}
We characterise the bracketing identities satisfied by linear quasigroups with the help of certain equivalence relations on binary trees that are based on the left and right depths of the leaves modulo some integers. The numbers of equivalence classes of $n$\hyp{}leaf binary trees are variants of the Catalan numbers, and they form the associative spectrum (a kind of measure of non\hyp{}associativity) of a quasigroup.
\end{abstract}

\maketitle

%%%%%%%%%%%%%%%%%%%%%%%%%%%%%%%%%%%%%%%%%%%%%%%%%%

\section{Introduction}

If a binary operation $\circ$ is associative, then, by the generalized associative law, all bracketings of the formal product $x_1 \circ \dots \circ x_n$ give the same result.
Here, by a \emph{bracketing} we mean any expression that is obtained from $x_1 \circ \dots \circ x_n$ by inserting round brackets (parentheses) in a syntactically correct way to determine its value unambiguously.
However, even if our operation is not associative, it may still satisfy some nontrivial \emph{bracketing identities} in more than three variables, such as the identity $(x_1-(x_2-x_3))-x_4 \approx x_1-(x_2-(x_3-x_4))$ satisfied by subtraction.
On the other hand, it may also happen that an operation satisfies no such generalized associativity conditions at all; for example, exponentiation of positive integers is \emph{antiassociative} in this sense \cite[Example 3]{Lord-1987} and so is the logical implication operation \cite[4.3]{CsaWal-2000}, just to mention two noteworthy examples.
(Actually, it seems plausible that almost all binary operations on a given finite set are antiassociative \cite{CsaWal-2000}.)

In this paper we study associativity conditions of this kind for quasigroups.
Belousov \cite[Theorem 4]{Belousov-1966} proved that if a quasigroup $\alg{A}=(A, {\circ})$ satisfies a nontrivial bracketing identity, then we have $x \circ y = \varphi_0(x)+c+\varphi_1(y)$, where $(A, {+})$ is a group, $\varphi_0, \varphi_1$ are automorphisms of this group and $c \in A$ is an arbitrary constant.
In this case we say that $\alg{A}=(A, {\circ})$ is an \emph{affine quasigroup} over the group $(A, {+})$.
(Let us note that this result of Belousov's considers not bracketing identities, but so-called irreducible balanced identities of the first kind. 
Balanced identities of the first kind are the same as bracketing identities, and it is straightforward to verify that any quasigroup satisfying a nontrivial bracketing identity also satisfies an irreducible balanced identity of the first kind.)
It was also proved by Belousov that the automorphisms $\varphi_0$ and $\varphi_1$ have to be of finite order in order to have a nontrivial associativity condition.
We will also obtain this as a corollary of our main theorem (see Corollary~\ref{cor:antiassociative}).

In light of Belousov's theorem, in order to understand associativity conditions of quasigroups, it suffices to focus on affine quasigroups.
We deal with the case $c=0$ here, i.e., with \emph{linear quasigroups} over groups; thus we consider operations of the form $x \circ y = \varphi_0(x)+\varphi_1(y)$.
Subtraction and arithmetic mean of numbers are simple examples of linear quasigroup operations, and, more generally, $(\CC, ax+by)$ is a linear quasigroup for any nonzero complex numbers $a$ and $b$.
Our main result is a complete description of bracketing identities satisfied by such operations (Theorem~\ref{thm:spectra}).
The description is given in terms of certain equivalence relations on binary trees, as one can naturally identify each bracketing of $x_1 \circ \dots \circ x_n$ with a binary tree with $n$ leaves.
We define an equivalence relation $\eqabG{a}{b}{\alg{G}}$ on binary trees for any group $\alg{G}$ and $a,b \in G$, and we prove that the bracketing identities satisfied by a linear quasigroup can be always given by such a relation.
Then we show that every such ``modulo $\alg{G}$" equivalence coincides with one where $\alg{G}$ is a two-generated Abelian group, and finally we conclude that the bracketing identities satisfied by a linear quasigroup can be described by at most two linear congruences involving left and right depths of leaves in binary trees.

The number of equivalence classes on the set of binary trees with $n$ leaves gives the number of term operations induced by different bracketings of $x_1 \circ \dots \circ x_n$.
This sequence of natural numbers is called the \emph{associative spectrum} of the operation $\circ$, and it measures -- in some sense -- how far the operation is from being associative \cite{CsaWal-2000}:
the faster the sequence grows, the less associative the operation is considered.
The spectrum of an associative operation is constant $1$, whereas the spectrum of an antiassociative operation is the sequence of Catalan numbers.
Since associative spectra of linear quasigroups are determined by congruences for left and right depths of leaves in binary trees, these sequences can be regarded as interesting ``modular" variants of the Catalan numbers.
In fact, Hein and Huang defined $k$\hyp{}modular Catalan numbers as the sequence arising from the equivalence relation where left depths are required to agree modulo $k$ \cite{HeiHua-2017}.
We obtain a much larger class of equivalence relations, where left and right depths occur together in linear congruences with possibly different moduli, and many of the corresponding numerical sequences seem to be new and do not appear in the OEIS\@. 
We believe they may be of interest on their own right, as they are based on simple and fundamental relationships between binary trees.
We have computed the first few members of the sequences, but unfortunately we were not able to explicitly describe the entire sequences.
Finding explicit formulas for the $n$\hyp{}th members of such sequences remains an intriguing open problem.

%%%%%%%%%%%%%%%%%%%%%%%%%%%%%%%%%%%%%%%%%%%%%%%%%%

\section{Preliminaries}

\subsection{Groupoids, groups, quasigroups}

We will use the following notation for familiar sets of natural numbers.
Let $\IN := \{0, 1, 2, \dots\}$ and $\IN_{+} := \IN \setminus \{0\}$.
For $n \in \IN$, let $\nset{n} := \{1,2,\dots,n\}$.

Recall that a \emph{groupoid} is an algebra $\alg{A} = (A, {\circ})$ with a single binary operation $\circ\colon A \times A \to A$, often referred to as \emph{multiplication} and usually written simply as juxtaposition.
A \emph{semigroup} is a groupoid with associative multiplication.
A \emph{monoid} is a semigroup with a neutral element.
A \emph{group} is a monoid in which every element is invertible.
A \emph{quasigroup} is a groupoid $(A, {\circ})$ such that for all $a, b \in A$, there exist unique elements $x, y \in A$ such that $a \circ x = b$ and $y \circ a = b$.
In other words, the multiplication table of $(A, {\circ})$ (for a finite set $A$) is a Latin square.
An associative quasigroup is a group.

\subsection{Bracketings, associative spectrum}

In this paper, we consider terms in the language of groupoids over the set $X := \{x_i \mid i \in \IN_{+}\}$, the so\hyp{}called 
standard set of variables.
Such \emph{terms} can be defined by the following recursion:
every variable $x_i \in X$ is a term, and
if $t_1$ and $t_2$ are terms, then $(t_1 t_2)$ is a term;
every term is obtained by a finite number of applications of these rules.

Let $\alg{A} = (A, {\circ})$ be a groupoid, and let $t$ be a term in which only (some of) the variables $x_1,\dots,x_n$ occur.
The \emph{term operation} induced by $t$ on $\alg{A}$ is the $n$-ary operation $t^{\alg{A}} \colon A^n \to A,\  (a_1,\dots,a_n) \mapsto t^{\alg{A}}(a_1,\dots,a_n)$, where $t^{\alg{A}}(a_1,\dots,a_n)$ is obtained by substituting $a_i$ for each $x_i$ and evaluating the expression over $\alg{A}$.

An \emph{identity} is a pair $(s,t)$ of terms, usually written as $s \approx t$; this identity is considered \emph{trivial} if $s = t$.
A groupoid $\alg{A} = (A, {\circ})$ \emph{satisfies} an identity $s \approx t$, in symbols, $\alg{A} \satisfies s \approx t$, if $s^{\alg{A}} = t^{\alg{A}}$, 
i.e., if $s^{\alg{A}}(a_1,\dots,a_n) = t^{\alg{A}}(a_1,\dots,a_n)$ for all $a_1,\dots,a_n \in A$ (assuming that only $x_1,\dots,x_n$ occur in $s$ and $t$).

A \emph{bracketing} of size $n$ is a term in the language of groupoids obtained by inserting pairs of parentheses in the string $x_1 x_2 \cdots x_n$ appropriately.
The number of distinct bracketings of size $n$ equals the $(n-1)$\hyp{}st Catalan number $C_{n-1}$.
We denote by $B_n$ the set of all bracketings of size $n$.
A \emph{bracketing identity} of size $n$ is an identity $t \approx t'$ where $t, t' \in B_n$.

Let $\alg{A} = (A, {\circ})$ be a groupoid.
For each $n \in \IN_+$, we define the equivalence relation $\sigma_n(\alg{A})$ on $B_n$ by the rule that $(t,t') \in \sigma_n(\alg{A})$ if and only if $\alg{A}$ satisfies the identity $t \approx t'$.
We call the sequence $\sigma(\alg{A})=(\sigma_n(\alg{A}))_{n \in \IN_{+}}$ the \emph{fine associative spectrum} of $\alg{A}$.
Observe that $\sigma(\alg{A})$ can be regarded as a single equivalence relation on the set of all bracketings with the property that each $B_n$ is a union of equivalence classes.
The \emph{associative spectrum} of $\alg{A}$ is the sequence $(s_n(\alg{A}))_{n \in \IN_+}$, where $s_n(\alg{A}) := \card{B_n / \sigma_n(\alg{A})}$.
Equivalently, $s_n(\alg{A})$ is the number of distinct term operations induced by the bracketings of size $n$ on $\alg{A}$.
We clearly have $1 \leq s_n(\alg{A}) \leq C_{n-1}$.
Since there is only one bracketing of size $1$, namely $x_1$, and only one bracketing of size $2$, namely $(x_1 x_2)$, it it obvious that $s_1(\alg{A}) = s_2(\alg{A}) = 1$ for every groupoid $\alg{A}$.
Therefore, we may always assume that $n \geq 3$ when we consider the $n$\hyp{}th component of an associative spectrum.

The associative spectrum can be seen as a measure of how far the groupoid operation is from being associative.
Intuitively, the faster the associative spectrum grows, the less associative the operation is considered.
If the operation is associative, then $s_n(\alg{A}) = 1$ for all $n \in \IN_+$.
At the other extreme, we have groupoids with ``Catalan spectrum", i.e., $s_n(\alg{A}) = C_{n-1}$ for $n \geq 2$; we call such groupoids \emph{antiassociative.}\footnote{This is not to be confused with the following property that is also often called \emph{antiassociativity:} for all $a, b, c \in A$, $a \circ (b \circ c) \neq (a \circ b) \circ c$.}
The notion of associative spectrum was introduced by Cs\'{a}k\'{a}ny and Waldhauser \cite{CsaWal-2000},
and it appears in the literature under different names, such as
``subassociativity type'' (Braitt, Silberger \cite{BraSil-2006}),
and
``the number of $*$\hyp{}equivalence classes of parenthesisations of $x_0 * x_1 * \dots * x_n$'' (Hein, Huang \cite{HeiHua-2017}, Huang, Mickey, Xu \cite{HuaMicXu-2017}).

\subsection{Binary trees}
\label{subs:binary-trees}

A \emph{tree} is a directed graph $T$ that has a designated vertex
called the \emph{root}, and in which there is a unique walk from the root to any other vertex $v$.
Hence a tree is acyclic, and the edges are directed away from the root.
In this paper, we draw trees in such a way that the root is on the top and edges are directed downwards; with this convention there is no need to indicate the direction of edges.
In a tree, the outneighbours of a vertex $v$ are called its \emph{children,} and $v$ is called the \emph{parent} of its children.
The vertices reachable from $v$ are called its \emph{descendants,} and $v$ is an \emph{ancestor} of any of its descendant.
A childless vertex is called a \emph{leaf;} non\hyp{}leaves are called \emph{internal vertices.}
A subgraph of a tree induced by a vertex $v$ and all its descendants is called the \emph{subtree} rooted at $v$.

An \emph{ordered tree} or \emph{plane tree} is a tree in which a linear ordering is specified for the children of each vertex.
We think of ordering the children from left to right, so that if $v$ has outdegree $k$ and its children are ordered as $u_0 < u_1 < \dots < u_{k-1}$, then $u_0$ is the leftmost child and $u_{k-1}$ is the rightmost child of $v$.
Diagrams presenting plane trees shall be drawn in such a way that the children of a vertex are drawn left\hyp{}to\hyp{}right; such a drawing uniquely specifies the ordering of children.

A \emph{binary tree} is a plane tree in which every internal vertex has exactly two children; the two children are referred to as the \emph{left child} and the \emph{right child.}
We denote by $\mathcal{T}_n$ the set of all (isomorphism classes of) binary trees with $n$ leaves, and we let $\mathcal{T} = \bigcup_{n \in \IN_{+}} \mathcal{T}_n$ stand for the set of all binary trees.
The subtree rooted at the left child (right child) of a vertex $v$ is referred to as the \emph{left} (\emph{right}) \emph{subtree} of $v$.

Let $T$ be a plane tree.
The \emph{address} of a vertex $v$ in $T$, denoted by $\addr_T(v)$, is a word over $\IN$ defined by the following recursion.
The address of the root is the empty word $\varepsilon$.
If $v$ is an internal node with address $w$, and the children of $v$ are $u_0 < u_1 < \dots < u_{k-1}$, then the address of the child $u_i$ is $w i$.
Thus, the address of a vertex conveys the sequence of choices of children made along the unique path from the root to the given vertex.

The length of the unique path from the root to a vertex $v$ in $T$ is called the (\emph{total}) \emph{depth} of $v$ in $T$ and is denoted by $d_T(v)$.
In a binary tree $T$, we also define
the \emph{left depth} of a vertex $v$ in $T$, denoted by $\ld_T(v)$, as the number of left steps on the unique path from the root of $T$ to $v$, i.e., the number of $0$'s in $\addr_T(v)$.
The \emph{right depth} of $v$ in $T$ is defined analogously and is denoted by $\rd_T(v)$.

\begin{example}
There are fourteen binary trees with five leaves; they are presented in Figure~\ref{fig:5-trees}.
Below each tree, we provide its left, right, and total depth sequences.
In tree $T_3$, the addresses of the leaves are $0$, $100$, $101$, $110$, and $111$.
\end{example}

\begin{figure}
\begin{center}
\makebox[0pt]{%
\begin{tabular}{ccccccc}
$T_1$ & $T_2$ & $T_3$ & $T_4$ & $T_5$ & $T_6$ & $T_7$ \\[2ex]
\begin{tikzpicture}[puu]
  \node {}
    child {node {}}
    child {node {}
      child {node {}}
      child {node {}
        child {node {}}
        child {node {}
          child {node {}}
          child {node {}}
        }
      }
    }
  ;
\end{tikzpicture}
&
\begin{tikzpicture}[puu]
  \node {}
    child {node {}}
    child {node {}
      child {node {}}
      child {node {}
        child {node {}
          child {node {}}
          child {node {}}
        }
        child {node {}}
      }
    }
  ;
\end{tikzpicture}
&
\begin{tikzpicture}[puu,level 1/.style={sibling distance=1cm},level 3/.style={sibling distance=0.5cm}]
  \node {}
    child {node {}}
    child {node {}
      child {node {}
        child {node {}}
        child {node {}}
      }
      child {node {}
        child {node {}}
        child {node {}}
      }
    }
  ;
\end{tikzpicture}
&
\begin{tikzpicture}[puu]
  \node {}
    child {node {}}
    child {node {}
      child {node {}
        child {node {}}
        child {node {}
          child {node {}}
          child {node {}}
        }
      }
      child {node {}}
    }
  ;
\end{tikzpicture}
&
\begin{tikzpicture}[puu]
  \node {}
    child {node {}}
    child {node {}
      child {node {}
        child {node {}
          child {node {}}
          child {node {}}
        }
        child {node {}}
      }
      child {node {}}
    }
  ;
\end{tikzpicture}
&
\begin{tikzpicture}[puu,level 1/.style={sibling distance=1cm},level 2/.style={sibling distance=0.5cm}]
  \node {}
    child {node {}
      child {node {}}
      child {node {}}
    }
    child {node {}
      child {node {}}
      child {node {}
        child {node {}}
        child {node {}}
      }
    }
  ;
\end{tikzpicture}
&
\begin{tikzpicture}[puu,level 1/.style={sibling distance=1cm},level 2/.style={sibling distance=0.5cm}]
  \node {}
    child {node {}
      child {node {}}
      child {node {}}
    }
    child {node {}
      child {node {}
        child {node {}}
        child {node {}}
      }
      child {node {}}
    }
  ;
\end{tikzpicture}
\\[2ex]
11110 & 11210 & 12110 & 12210 & 13210 & 21110 & 21210 \\
01234 & 01233 & 01223 & 01232 & 01222 & 01123 & 01122 \\
12344 & 12443 & 13333 & 13442 & 14432 & 22233 & 22332 \\[4ex]
$T_8$ & $T_9$ & $T_{10}$ & $T_{11}$ & $T_{12}$ & $T_{13}$ & $T_{14}$ \\[2ex]
\begin{tikzpicture}[puu]
  \node {}
    child {node {}
      child {node {}
        child {node {}
          child {node {}}
          child {node {}}
        }
        child {node {}}
      }
      child {node {}}
    }
    child {node {}}
  ;
\end{tikzpicture}
&
\begin{tikzpicture}[puu]
  \node {}
    child {node {}
      child {node {}
        child {node {}}
        child {node {}
          child {node {}}
          child {node {}}
        }
      }
      child {node {}}
    }
    child {node {}}
  ;
\end{tikzpicture}
&
\begin{tikzpicture}[puu,level 1/.style={sibling distance=1cm},level 3/.style={sibling distance=0.5cm}]
  \node {}
    child {node {}
      child {node {}
        child {node {}}
        child {node {}}
      }
      child {node {}
        child {node {}}
        child {node {}}
      }
    }
    child {node {}}
  ;
\end{tikzpicture}
&
\begin{tikzpicture}[puu]
  \node {}
    child {node {}
      child {node {}}
      child {node {}
        child {node {}
          child {node {}}
          child {node {}}
        }
        child {node {}}
      }
    }
    child {node {}}
  ;
\end{tikzpicture}
&
\begin{tikzpicture}[puu]
  \node {}
    child {node {}
      child {node {}}
      child {node {}
        child {node {}}
        child {node {}
          child {node {}}
          child {node {}}
        }
      }
    }
    child {node {}}
  ;
\end{tikzpicture}
&
\begin{tikzpicture}[puu,level 1/.style={sibling distance=1cm},level 2/.style={sibling distance=0.5cm}]
  \node {}
    child {node {}
      child {node {}
        child {node {}}
        child {node {}}
      }
      child {node {}}
    }
    child {node {}
      child {node {}}
      child {node {}}
    }
  ;
\end{tikzpicture}
&
\begin{tikzpicture}[puu,level 1/.style={sibling distance=1cm},level 2/.style={sibling distance=0.5cm}]
  \node {}
    child {node {}
      child {node {}}
      child {node {}
        child {node {}}
        child {node {}}
      }
    }
    child {node {}
      child {node {}}
      child {node {}}
    }
  ;
\end{tikzpicture}
\\[2ex]
43210 & 33210 & 32210 & 23210 & 22210 & 32110 & 22110 \\
01111 & 01211 & 01121 & 01221 & 01231 & 01112 & 01212 \\
44321 & 34421 & 33331 & 24431 & 23441 & 33222 & 23322 \\
\end{tabular}
}
\end{center}

\bigskip
\caption{The binary trees with five leaves.}
\label{fig:5-trees}
\end{figure}
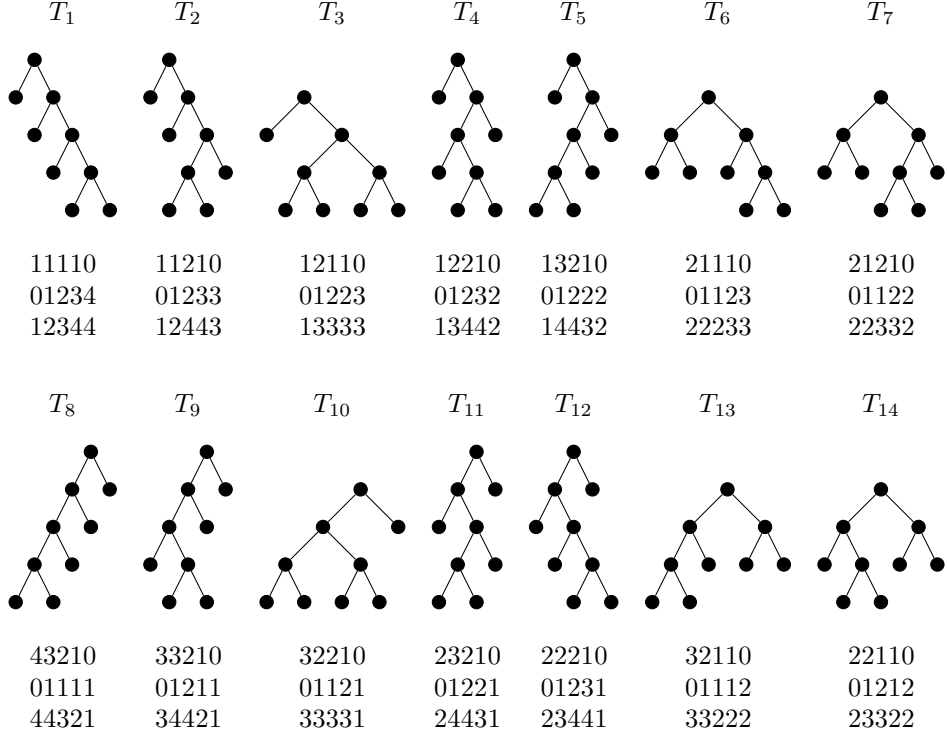

The addresses of two consecutive leaves of a binary tree are related in the following way.

\begin{lemma}
\label{lem:addresses}
Let $T$ be a binary tree with leaves $1, 2, \dots, n$ in the left\hyp{}to\hyp{}right order.
Then for all $i \in \nset{n-1}$, $\addr_T(i) = u 0 1^p$ and $\addr_T(i+1) = u 1 0^q$ for some $p, q \in \IN$, where $u$ is the address of the deepest common ancestor of the leaves $i$ and $i + 1$.
\end{lemma}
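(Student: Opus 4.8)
The plan is to induct on the structure of the tree $T$, or equivalently on the depth of the deepest common ancestor of the leaves $i$ and $i+1$. Let $w$ be the deepest common ancestor of leaves $i$ and $i+1$; since leaf $i$ comes before leaf $i+1$ in the left-to-right order and $w$ is their deepest common ancestor, leaf $i$ lies in the left subtree of $w$ and leaf $i+1$ lies in the right subtree of $w$. Write $\addr_T(w) = u$. Then $\addr_T(i)$ begins with $u0$ and $\addr_T(i+1)$ begins with $u1$.

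First I would show that leaf $i$ is the rightmost leaf of the left subtree of $w$, and leaf $i+1$ is the leftmost leaf of the right subtree of $w$. Indeed, every leaf in the left subtree of $w$ precedes every leaf in the right subtree of $w$ in the left-to-right order (this is the defining property of plane/ordered trees), and any leaf strictly between the rightmost leaf of the left subtree and $i$, or strictly between $i+1$ and the leftmost leaf of the right subtree, would contradict the consecutiveness of $i$ and $i+1$. So it suffices to prove the following two auxiliary facts about an arbitrary binary tree $S$: the address of the rightmost leaf of $S$ is $1^p$ for some $p \in \IN$, and the address of the leftmost leaf of $S$ is $0^q$ for some $q \in \IN$. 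Both follow by an easy induction on the number of leaves of $S$: if $S$ has one leaf, the address is $\varepsilon = 1^0 = 0^0$; otherwise the rightmost leaf of $S$ is the rightmost leaf of the right subtree of the root, whose address (by induction) has the form $1^{p'}$, so prepending the step $1$ gives $1^{p'+1}$, and symmetrically for the leftmost leaf.

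Combining these, the address of leaf $i$ inside the left subtree of $w$ is $1^p$, hence $\addr_T(i) = u 0 1^p$; and the address of leaf $i+1$ inside the right subtree of $w$ is $0^q$, hence $\addr_T(i+1) = u 1 0^q$, as claimed. I do not expect any real obstacle here — the only point requiring a little care is the justification that $i$ and $i+1$ are respectively the rightmost and leftmost leaves of the two subtrees of $w$, which rests on the order-theoretic properties of the linear ordering on leaves induced by the plane-tree structure (equivalently, that addresses are ordered lexicographically and that the leaf order is this lexicographic order restricted to leaves). Once that is in place, the two one-line inductions finish the proof.
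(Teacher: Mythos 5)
Your proof is correct and follows exactly the paper's route: the paper's own (one-line) proof rests on the observation that leaf $i$ is the rightmost leaf of the left subtree and leaf $i+1$ the leftmost leaf of the right subtree of their deepest common ancestor, which is precisely your key step. You merely spell out the two easy inductions (addresses $1^p$ and $0^q$ within those subtrees) that the paper leaves implicit as "obvious".
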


\begin{proof}
Obvious, as the leaves $i$ and $i + 1$ are the rightmost leaf of the left subtree and the leftmost leaf of the right subtree, respectively, of the deepest common ancestor of $i$ and $i+1$.
\end{proof}

A binary tree is uniquely determined by the left (or right) depths of the leaves, as the following lemma reveals.

\begin{lemma}
\label{lem:da}
For any $T \in \mathcal{T}_n$ and $i \in \nset{n}$,
\begin{enumerate}[label={\upshape (\roman*)}]
\item\label{lem:da:l} the left depths $\ld_T(1), \dots, \ld_T(i)$ uniquely determine the addresses $\addr_T(1), \linebreak \dots, \addr_T(i)$, and
\item\label{lem:da:r} the right depths $\rd_T(i), \dots, \rd_T(n)$ uniquely determine the addresses $\addr_T(i), \linebreak \dots, \addr_T(n)$.
\end{enumerate}
\end{lemma}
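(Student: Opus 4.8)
The plan is to prove both statements by induction on the number of leaves being considered, using Lemma~\ref{lem:addresses} to reconstruct addresses one leaf at a time. By left--right symmetry (reversing the linear order of leaves swaps left and right depths and reverses addresses with $0\leftrightarrow 1$), it suffices to prove~\ref{lem:da:l}; part~\ref{lem:da:r} then follows by applying~\ref{lem:da:l} to the mirror image of $T$.

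For~\ref{lem:da:l}, I would argue that $\addr_T(1), \dots, \addr_T(i)$ can be recovered from $\ld_T(1), \dots, \ld_T(i)$ by induction on $i$. The base case $i=1$ is immediate: leaf~$1$ is the leftmost leaf, so $\addr_T(1) = 0^{\ld_T(1)}$. For the inductive step, suppose $\addr_T(1), \dots, \addr_T(i)$ are already determined. By Lemma~\ref{lem:addresses}, $\addr_T(i) = u 0 1^p$ and $\addr_T(i+1) = u 1 0^q$, where $u$ is the address of the deepest common ancestor of leaves $i$ and $i+1$, and $p, q \in \IN$. Since we know $\addr_T(i)$, the candidates for $u$ are exactly the prefixes $u'$ of $\addr_T(i)$ such that $\addr_T(i)$ continues with $0$ after $u'$ followed only by $1$'s; that is, $u$ is obtained from $\addr_T(i)$ by deleting a trailing block $0 1^p$. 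The key observation is that $\ld_T(i+1) = \ld_T(u) + 1$: indeed, the address $u 1 0^q$ has $\ld_T(u)$ zeros coming from $u$, none from the letter $1$, and $q$ from the trailing block, and since leaf $i+1$ is the \emph{leftmost} leaf of the right subtree of the node with address $u$, its own subtree descent contributes $q$ left steps --- wait, this needs care: $0^q$ contributes $q$ left steps, so $\ld_T(i+1) = \ld_T(u) + q$, not $\ld_T(u)+1$. The correct extraction is: among the prefixes $u'$ of $\addr_T(i)$ eligible to be $u$ (those after which $\addr_T(i)$ reads $0 1^{*}$), the quantity $\ld_{T}(u')$ is strictly decreasing as $u'$ gets shorter, since each deleted block $0 1^p$ removes exactly one $0$. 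Hence there is a \emph{unique} eligible prefix $u$ with $\ld_T(u) \le \ld_T(i+1)$ and $\ld_T(u)$ as large as possible; more precisely, $\ld_T(u)$ is forced because going from $\addr_T(i)$ to $\addr_T(i+1)$ we must have $\ld_T(u) \le \min(\ld_T(i), \ld_T(i+1))$ and the number of trailing $1$'s in $\addr_T(i)$ after the last $0$ of $u$ determines everything. I would make this precise by noting: let $\ell$ be the largest index with a $0$ in position $\ell$ of $\addr_T(i)$; walking back, the eligible ancestors correspond to positions of $0$'s read from the right, and the left depth of each drops by one at each step, so $\ld_T(i+1) = \ld_T(u) + q$ with $q \ge 0$ pins down $u$ as the unique eligible prefix with left depth $\le \ld_T(i+1)$ and with no shorter eligible prefix also having left depth $\le \ld_T(i+1)$ --- equivalently, $u$ is the eligible prefix whose left depth equals $\ld_T(i+1) - q$ for the unique valid $q$. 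Once $u$ is identified, $\addr_T(i+1) = u\, 1\, 0^{\,\ld_T(i+1) - \ld_T(u)}$ is completely determined.

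The main obstacle is getting the bookkeeping of this extraction exactly right: one must verify that the ``descend from $\addr_T(i)$'' procedure has a unique stopping point, i.e., that knowing $\ld_T(i+1)$ singles out one ancestor $u$ among the chain of candidate deepest-common-ancestors. This reduces to the monotonicity fact that as we strip successive blocks $0 1^{p}$ off the right end of $\addr_T(i)$, the left depth of the resulting prefix strictly decreases (by exactly one per block), while the total depth decreases by $1 + p$; since $\ld_T(u) = \ld_T(i+1) - q \le \ld_T(i+1)$ with $q$ the number of trailing zeros we must then append, and since a shorter $u$ would have strictly smaller left depth hence require a larger $q$ but fewer available positions, uniqueness follows. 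I would present this as a short computation: let the trailing-$1$ block of $\addr_T(i)$ after its last $0$ have length $p_0$, the next one length $p_1$, and so on; the candidate $u$'s have left depths $\ld_T(i), \ld_T(i)-1, \ld_T(i)-2, \dots$, and exactly one of these equals some value $\le \ld_T(i+1)$ that is consistent with a nonnegative $q$, because $\ld_T(i+1)$ itself is bounded by $\ld_T(i)$ (the path to leaf $i+1$ has fewer left steps among its shared prefix with leaf $i$'s path, plus the extra zeros). Everything else --- the base case, the symmetry argument for~\ref{lem:da:r}, and the fact that recovering addresses recovers the tree --- is routine.
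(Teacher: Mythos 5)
Your overall strategy coincides with the paper's: induction on $i$, base case $\addr_T(1)=0^{\ld_T(1)}$, Lemma~\ref{lem:addresses} in the inductive step, the reduction of \ref{lem:da:r} to \ref{lem:da:l} via the opposite tree, and the final formula $\addr_T(i+1)=u\,1\,0^{\,\ld_T(i+1)-\ld_T(u)}$, which is exactly the paper's $q=\ld_T(i+1)-\ld_T(i)+1$. The difference is that the paper's inductive step is immediate, whereas most of your write-up is devoted to ``disambiguating'' $u$ using $\ld_T(i+1)$ --- and that digression is both unnecessary and, as argued, not sound.

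The fact you overlook is that the factorisation $\addr_T(i)=u\,0\,1^{p}$ determines $u$ from $\addr_T(i)$ alone: since every letter after the displayed $0$ is a $1$, that $0$ is the \emph{last} $0$ of $\addr_T(i)$, so $u$ is the prefix preceding the last $0$ and $p$ is the number of trailing $1$'s. There is exactly one ``eligible prefix'' in your own first sense, so no appeal to $\ld_T(i+1)$ is needed to locate $u$. Moreover, the disambiguation argument you then sketch would fail if it were really needed: once you enlarge the candidate set to all prefixes of $\addr_T(i)$ that are followed by a $0$ (i.e.\ all ancestors of leaf $i$ from which the path turns left), \emph{every} candidate $u'$ with at most $\ld_T(i+1)$ zeros is ``consistent with a nonnegative $q$'' --- just take $q=\ld_T(i+1)$ minus the number of zeros of $u'$ --- and since the deepest candidate has $\ld_T(i)-1\le\ld_T(i+1)$ zeros, typically all candidates are consistent. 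So the claim that ``exactly one of these \dots\ is consistent with a nonnegative $q$'' is false, and ``fewer available positions'' carries no force; the value $\ld_T(i+1)$ cannot distinguish the words $u'\,1\,0^{q'}$, all of which have $\ld_T(i+1)$ zeros. Your selection rule (deepest candidate) does pick the correct $u$, but its justification is Lemma~\ref{lem:addresses} applied to leaf $i$ (the suffix after $u$ must read $01^{p}$), not any interplay with $\ld_T(i+1)$. With that one observation your proof collapses to the paper's three lines: read $u$ off from $\addr_T(i)$, set $q=\ld_T(i+1)-\ld_T(i)+1$, and conclude $\addr_T(i+1)=u10^{q}$; the base case and the symmetry argument for \ref{lem:da:r} are fine as you state them.
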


\begin{proof}
We prove statement \ref{lem:da:l} by induction on $i$.
The proof of \ref{lem:da:r} is analogous.
The case $i = 1$ is clear, as $\addr_T(1) = 0^{\ld_T(1)}$.
Now let $i \in \nset{n-1}$, and assume that we are given the data $\ld_T(1), \dots, \ld_T(i+1)$, and, by the induction hypothesis, $\addr_T(1), \dots, \addr_T(i)$ are also known.
By Lemma~\ref{lem:addresses}, we have $\addr_T(i) = u 0 1^p$ and $\addr_T(i+1) = u 1 0^q$ for some word $u$ and $p,q \in \IN$.
Since we know $\addr_T(i)$, we are in possession of the word $u$, and we can also compute the number $q$ as $q = \ld_T(i+1) - \ld_T(i) + 1$. 
In this way we have determined $\addr_T(i+1) = u 1 0^q$.
\end{proof}

The following fact will also be useful.

\begin{lemma}
\label{lem:dr}
For any $T, T' \in \mathcal{T}_n$ with $T \neq T'$,
\begin{enumerate}[label={\upshape (\roman*)}]
\item\label{lem:dr:l}
the least $i \in \nset{n}$ such that $(\ld_T(i) - \ld_{T'}(i), \rd_T(i) - \rd_{T'}(i)) \neq (0,0)$ \textup{(}equivalently, the least $i \in \nset{n}$ such that $\ld_T(i) \neq \ld_{T'}(i)$\textup{)} satisfies $\rd_T(i) = \rd_{T'}(i)$, and
\item\label{lem:dr:r}
the greatest $i \in \nset{n}$ such that $(\ld_T(i) - \ld_{T'}(i), \rd_T(i) - \rd_{T'}(i)) \neq (0,0)$ \textup{(}equivalently, the greatest $i \in \nset{n}$ such that $\rd_T(i) \neq \rd_{T'}(i)$\textup{)} satisfies $\ld_T(i) = \ld_{T'}(i)$.
\end{enumerate}
\end{lemma}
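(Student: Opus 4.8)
The plan is to deduce both statements from Lemma~\ref{lem:addresses} and Lemma~\ref{lem:da}, giving a full argument for part~\ref{lem:dr:l} and obtaining part~\ref{lem:dr:r} by the left--right mirror symmetry (interchange $0\leftrightarrow1$ in addresses, $\ld\leftrightarrow\rd$, and read the leaves from right to left). For part~\ref{lem:dr:l}, I would fix $i$ to be the least index with $\addr_T(i)\neq\addr_{T'}(i)$. Such an $i$ exists: by Lemma~\ref{lem:da}\ref{lem:da:l} with $i=n$ the left-depth sequence determines the tree, so $T\neq T'$ forces $\ld_T\neq\ld_{T'}$ and hence $\addr_T\neq\addr_{T'}$ at some leaf. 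For every $j<i$ the addresses agree, so $\ld_T(j)=\ld_{T'}(j)$ and $\rd_T(j)=\rd_{T'}(j)$, whence the pair $(\ld_T-\ld_{T'},\rd_T-\rd_{T'})$ vanishes on $\nset{i-1}$. Consequently, once I prove that at $i$ one has $\ld_T(i)\neq\ld_{T'}(i)$ and $\rd_T(i)=\rd_{T'}(i)$, it follows at once that $i$ is simultaneously the least index where $\ld_T$ and $\ld_{T'}$ disagree and the least index where the above pair is nonzero; thus the two parenthetical descriptions in the statement both pick out this $i$, and $\rd_T(i)=\rd_{T'}(i)$ is precisely the assertion.

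The heart of the matter is therefore the computation at the index $i$. If $i=1$, then $\addr_T(1)=0^{\ld_T(1)}$ and $\addr_{T'}(1)=0^{\ld_{T'}(1)}$; these are distinct, so $\ld_T(1)\neq\ld_{T'}(1)$, while trivially $\rd_T(1)=0=\rd_{T'}(1)$. If $i\geq2$, I would apply Lemma~\ref{lem:addresses} to the consecutive leaves $i-1,i$ in each tree and write $\addr_T(i-1)=u01^p$, $\addr_T(i)=u10^q$, $\addr_{T'}(i-1)=u'01^{p'}$, $\addr_{T'}(i)=u'10^{q'}$. Since $i-1<i$, the addresses of leaf $i-1$ coincide, i.e.\ $u01^p=u'01^{p'}$ as words; looking at the position of the last occurrence of $0$ in this common word forces $|u|=|u'|$, hence $u=u'$ and $p=p'$. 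Then $\addr_T(i)=u10^q$ and $\addr_{T'}(i)=u10^{q'}$ must differ, so $q\neq q'$; counting symbols gives $\ld_T(i)-\ld_{T'}(i)=q-q'\neq0$, while $\rd_T(i)$ and $\rd_{T'}(i)$ each equal $1$ plus the number of $1$'s in $u$, so $\rd_T(i)=\rd_{T'}(i)$, as needed.

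For part~\ref{lem:dr:r} the mirror version fixes $i$ to be the greatest index with $\addr_T(i)\neq\addr_{T'}(i)$ (existence again from $T\neq T'$ via Lemma~\ref{lem:da}\ref{lem:da:r}); the case $i=n$ uses $\addr_T(n)=1^{\rd_T(n)}$, and for $i<n$ one applies Lemma~\ref{lem:addresses} to the leaves $i,i+1$, now deducing $u=u'$ and $q=q'$ from the coinciding addresses of leaf $i+1$, so that $p\neq p'$, whence $\rd_T(i)-\rd_{T'}(i)=p-p'\neq0$ and $\ld_T(i)=\ld_{T'}(i)$ (each being $1$ plus the number of $0$'s in $u$). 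I do not expect a genuine obstacle here; the only step needing a moment's thought is the word-combinatorial fact $u01^p=u'01^{p'}\Rightarrow u=u',\ p=p'$ and its mirror, which is immediate from where the last $0$ (respectively the first $1$) lies, and the rest is straightforward bookkeeping with the two cited lemmas.
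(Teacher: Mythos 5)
Your proof is correct and follows essentially the same route as the paper's: both hinge on Lemma~\ref{lem:addresses} applied to the consecutive leaves $i-1,i$ together with Lemma~\ref{lem:da}, deriving $u=u'$ and $p=p'$ from the common address of the earlier leaf and then comparing counts of $1$'s; you merely anchor the minimal index at the first address disagreement (and make the boundary cases $i=1$, $i=n$ and the equivalence of the two formulations explicit), whereas the paper starts from the first depth-pair disagreement. One cosmetic slip: for the mirrored word fact $u10^q=u'10^{q'}\Rightarrow u=u',\,q=q'$ the relevant pivot is the \emph{last} occurrence of $1$ (the first $1$ may lie inside $u$), but this does not affect the argument.
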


\begin{proof}
We prove only statement \ref{lem:dr:l}; the proof of \ref{lem:dr:r} is analogous.
Assume we have chosen the least $i$ with $(\ld_T(i) - \ld_{T'}(i), \rd_T(i) - \rd_{T'}(i)) \neq (0,0)$.
Then the left depths in $T$ and $T'$ coincide up to the $(i-1)$\hyp{}st leaf, and thus, by Lemma~\ref{lem:da}, the addresses also agree up to the $(i-1)$\hyp{}st leaf.
(This also shows that if the left depths coincide up to the $(i-1)$\hyp{}st leaf, then so do also the right depths, and we see that the two formulations of the choice of $i$ are indeed equivalent.)
We can write $\addr_T(i-1) = u 0 1^p$, $\addr_T(i) = u 1 0^q$ and $\addr_{T'}(i-1) = v 0 1^{p'}$, $\addr_{T'}(i) = v 1 0^{q'}$, according to Lemma~\ref{lem:addresses}.
Since $\addr_T(i-1) = \addr_{T'}(i-1)$, we have $u = v$ and $p = p'$.
From $u = v$ it follows that the number of ones in $u 1 0^q$ is the same as the number of ones in $v 1 0^{q'}$; thus $\rd_T(i) = \rd_{T'}(i)$.
\end{proof}

New binary trees can be built from given ones by joining two trees under a new root vertex.
Let $T_1$ and $T_2$ be binary trees.
We denote by $T_1 \wedge T_2$ the binary tree that is obtained by taking the disjoint union of $T_1$ and $T_2$, adding a new vertex $u$ and designating it as the root of $T_1 \wedge T_2$, and setting the root of $T_1$ as the left child of $u$ and the root of $T_2$ as the right child of $u$.

It is well known that binary trees with $n$ leaves are in a one\hyp{}to\hyp{}one correspondence with bracketings of size $n$; hence the number of binary trees with $n$ leaves is $C_{n-1}$.
A canonical bijection between $B_n$ and $\mathcal{T}_n$ is given by the restriction to $B_n$ of the map $\tau$ from the set of all groupoid terms to the set of all binary trees defined recursively as follows.
For any variable $x_i$, let $\tau(x_i)$ be the binary tree with one vertex.
For a term $t = (t_1 \cdot t_2)$, let $\tau(t) := \tau(t_1) \wedge \tau(t_2)$.
We often identify a bracketing $t \in B_n$ with $\tau(t)$
without further mention.

The \emph{opposite} of a bracketing $t \in B_n$, denoted $t^\mathrm{op}$ is the bracketing obtained by writing $t$ backwards and changing $x_i$ to $x_{n-i+1}$ for $i \in \nset{n}$.
If $T = \tau(t)$ for some bracketing $t \in B_n$, then the tree $\tau(t^\mathrm{op})$ is called the \emph{opposite tree} of $T$ and is denoted by $T^\mathrm{op}$.
The opposite tree of $T$ can be thought of as obtained from $T$ by reflection over a vertical line.

\begin{example}
We list below the bracketings $t_i$ corresponding to the binary trees in Figure~\ref{fig:5-trees} (i.e., $T_i=\tau(t_i)$ for $i=1,\dots,14$).
The two bracketings appearing in each row are opposites of each other.
\medskip
\begin{center}
\begin{tabular}{r@{\,\,}lr@{\,\,}l}
\toprule
$t_1$ = & $x_1(x_2(x_3(x_4x_5)))$ & $t_8$ = & $(((x_1x_2)x_3)x_4)x_5$\\
$t_2$ = & $x_1(x_2((x_3x_4)x_5))$ & $t_9$ = & $((x_1(x_2x_3))x_4)x_5$\\
$t_3$ = & $x_1((x_2x_3)(x_4x_5))$ & $t_{10}$ = & $((x_1x_2)(x_3x_4))x_5$\\
$t_4$ = & $x_1((x_2(x_3x_4))x_5)$ & $t_{11}$ = & $(x_1((x_2x_3)x_4))x_5$\\
$t_5$ = & $x_1(((x_2x_3)x_4)x_5)$ & $t_{12}$ = & $(x_1(x_2(x_3x_4)))x_5$\\
$t_6$ = & $(x_1x_2)(x_3(x_4x_5))$ & $t_{13}$ = & $((x_1x_2)x_3)(x_4x_5)$\\
$t_7$ = & $(x_1x_2)((x_3x_4)x_5)$ & $t_{14}$ = & $(x_1(x_2x_3))(x_4x_5)$\\
\bottomrule
\end{tabular}
\end{center}

\end{example}

\subsection{Modular (left, right) depth sequences}

Let $T$ be a binary tree with $n$ leaves, and assume its leaves are $1, 2, \dots, n$ in the left\hyp{}to\hyp{}right order.
The \emph{depth sequence} of $T$ is the tuple $d_T := (d_T(1), d_T(2), \dots, d_T(n))$.
Similarly, the \emph{left depth sequence} of $T$ is the tuple $\ld_T := (\ld_T(1), \ld_T(2), \dots, \ld_T(n))$,
and the \emph{right depth sequence} of $T$ is $\rd_T := (\rd_T(1), \rd_T(2), \dots, \rd_T(n))$.
A binary tree is uniquely determined by its depth sequence,
and it is also uniquely determined by its left (or right) depth sequence (see Cs\'{a}k\'{a}ny, Waldhauser \cite[Statements 2.7, 2.8]{CsaWal-2000}, also cf.\ Lemma~\ref{lem:da}, Proposition~\ref{prop:random walk}, and Remark~\ref{remark:random walk}).

We may also consider (left, right) depth sequences modulo some $k \in \IN$.
Let $d^k_T$, $\ld^k_T$, $\rd^k_T$ be the sequences obtained from $d_T$, $\ld_T$, $\rd_T$, respectively, by taking componentwise remainders under division by $k$.
These are called the \emph{\textup{(}left, right\textup{)} depth sequences} of $T$ \emph{modulo $k$,} or \emph{modular \textup{(}left, right\textup{)} depth sequences} of $T$.
As the following example demonstrates, binary trees are not uniquely determined by their modular (left, right) depth sequences.

\begin{figure}
\hfill
\begin{tikzpicture}[puup]
  \node {}
    child {node(1) {} }
    child {node {}
      child {node {}}
      child {node {}
        child {node {}}
        child[piste, level distance=1cm, sibling distance=1cm] {node {}
          child[viiva, level distance=0.5cm, sibling distance=0.5cm] {node {}}
          child[viiva, level distance=0.5cm, sibling distance=0.5cm] {node {}
            child {node(k) {}}
            child {node {}
              child {node {}
                child {node {}
                  child[piste, level distance=1cm, sibling distance=1cm] {node {}
                    child[viiva, level distance=0.5cm, sibling distance=0.5cm] {node {}
                      child {node {}}
                      child {node(2k) {}}
                    }
                    child[viiva, level distance=0.5cm, sibling distance=0.5cm] {node {}}
                  }
                  child {node {}}
                }
                child {node {}}
              }
              child {node(k1) {}}
            }
          }
        }
      }
    }
  ;
\draw[decoration={brace,mirror,raise=5pt},decorate] (1.west) -- node[xshift=-12pt,yshift=-6pt,draw=none,fill=none] {$k$} (k.south);
\draw[decoration={brace,raise=5pt},decorate] (k1.east) -- node[xshift=12pt,yshift=-6pt,draw=none,fill=none] {$k$} (2k.south);
\end{tikzpicture}
\hfill
\begin{tikzpicture}[puup]
  \node {}
    child {node {}
      child {node {}
        child[piste, level distance=1cm, sibling distance=1cm] {node {}
          child[viiva, level distance=0.5cm, sibling distance=0.5cm] {node {}
            child {node {}
              child {node(k1) {}}
              child {node {}
                child {node {}}
                child {node {}
                  child {node {}}
                  child[piste, level distance=1cm, sibling distance=1cm] {node {}
                    child[viiva, level distance=0.5cm, sibling distance=0.5cm] {node {}}
                    child[viiva, level distance=0.5cm, sibling distance=0.5cm] {node {}
                      child {node(2k) {}}
                      child {node {}}
                    }
                  }
                }
              }
            }
            child {node(k) {}}
          }
          child[viiva, level distance=0.5cm, sibling distance=0.5cm] {node {}}
        }
        child {node {}}
      }
      child {node {}}
    }
    child {node(1) {} }
  ;
\draw[decoration={brace,raise=5pt},decorate] (1.east) -- node[xshift=12pt,yshift=-6pt,draw=none,fill=none] {$k$} (k.south);
\draw[decoration={brace,mirror,raise=5pt},decorate] (k1.west) -- node[xshift=-12pt,yshift=-6pt,draw=none,fill=none] {$k$} (2k.south);
\end{tikzpicture}
\hfill\mbox{}

\caption{Two binary trees with the same depth sequence modulo $k$.}
\label{fig:depth-mod-k}
\end{figure}

\begin{example}
For any $k \in \IN_{+}$, the two binary trees with $2k + 1$ leaves shown in Figure~\ref{fig:depth-mod-k} have the same depth sequence modulo $k$, namely $(1, 2, \dots, k-1, 0, 0, 0, k-1, \dots, 2, 1)$.
Similarly, the two binary trees with $k + 2$ leaves shown in Figure~\ref{fig:left-depth-mod-k} have the same left depth sequence modulo $k$, namely $(1, 0, k-1, k-2, \dots, 1, 0)$,
and their opposite trees have the same right depth sequence modulo $k$.
\end{example}

\begin{figure}
\hfill
\begin{tikzpicture}[puup]
  \node {}
    child {node {}}
    child {node {}
      child {node {}
        child {node {}
          child[piste, level distance=1cm, sibling distance=1cm] {node {}
            child[viiva, level distance=0.5cm, sibling distance=0.5cm] {node {}
              child {node {}}
              child {node(k) {}}
            }
            child[viiva, level distance=0.5cm, sibling distance=0.5cm] {node {}}
          }
          child {node {}}
        }
        child {node {}}
      }
      child {node(1) {} }
    }
  ;
\draw[decoration={brace,raise=5pt},decorate] (1.east) -- node[xshift=12pt,yshift=-6pt,draw=none,fill=none] {$k$} (k.south);
\end{tikzpicture}
\hfill
\begin{tikzpicture}[puup]
  \node {}
    child {node {}
      child {node {}
        child {node {}
          child[piste, level distance=1cm, sibling distance=1cm] {node {}
            child[viiva, level distance=0.5cm, sibling distance=0.5cm] {node {}
              child {node {}}
              child {node(k) {}}
            }
            child[viiva, level distance=0.5cm, sibling distance=0.5cm] {node {}}
          }
          child {node {}}
        }
        child {node {}}
      }
      child {node(1) {} }
    }
    child {node {}}
  ;
\draw[decoration={brace,raise=5pt},decorate] (1.east) -- node[xshift=12pt,yshift=-6pt,draw=none,fill=none] {$k$} (k.south);
\end{tikzpicture}
\hfill\mbox{}

\caption{Two binary trees with the same left depth sequence modulo $k$.}
\label{fig:left-depth-mod-k}
\end{figure}

It is easy to decide whether a given $n$\hyp{}tuple of natural numbers is the left or right depth sequence (modulo $k$) of some binary tree.
Namely, it is known that the right depth sequences of binary trees are precisely the so\hyp{}called zag sequences (see Cs\'ak\'any, Waldhauser \cite[Statement~2.8]{CsaWal-2000}) and the left depth sequences are the reversed zag sequences.
Such sequences are easy to recognize, also when reduced modulo $k$ (see Lehtonen, Waldhauser \cite[Proposition~5.4]{LehWal-2021}).
Total depth sequences are also easily recognizable, because, as mentioned above, a binary tree is uniquely determined by, and can be efficiently reconstructed from, its depth sequence.
As for total depth sequences modulo $k$, a rather simple necessary and sufficient condition in the case when $k = 2$ was provided by Huang, Mickey, and Xu \cite[Lemma~6]{HuaMicXu-2017}, but we are not aware of any similar results for moduli greater than $2$.

\subsection{Equivalence relations on binary trees based on modular (left, right) depth sequences}
\label{subsec:eqrel}

In this subsection
we are going to define several equivalence relations on the set $\mathcal{T}_n$ of binary trees with $n$ leaves ($n \in \IN_+$).
Using the one\hyp{}to\hyp{}one correspondence between binary trees with $n$ leaves and bracketings of $n$ variables, we may equivalently view these as equivalence relations on bracketings: if $\sim$ is any one of the equivalence relations defined on binary trees and $t, t' \in B_n$, we let $t \sim t'$ if and only if $\tau(t) \sim \tau(t')$.

\begin{definition}
\label{def:equivalences}
For $k, \ell \in \IN_+$ and $T, T' \in \mathcal{T}_n$, we let
\begin{itemize}
\item $T \eqD{k} T'$ if and only if $d^k_T = d^k_{T'}$, that is, $d_T(i) \equiv d_{T'}(i) \pmod{k}$ for all $i \in \nset{n}$ (\emph{$k$\hyp{}depth\hyp{}equivalence});
\item $T \eqL{k} T'$ if and only if $\ld^k_T = \ld^k_{T'}$, that is, $\ld_T(i) \equiv \ld_{T'}(i) \pmod{k}$ for all $i \in \nset{n}$ (\emph{$k$\hyp{}left\hyp{}depth\hyp{}equivalence});
\item $T \eqR{k} T'$ if and only if $\rd^k_T = \rd^k_{T'}$, that is, $\rd_T(i) \equiv \rd_{T'}(i) \pmod{k}$ for all $i \in \nset{n}$ (\emph{$k$\hyp{}right\hyp{}depth\hyp{}equivalence});
\item $T \eqLR{k}{\ell} T'$ if and only if $T \eqL{k} T'$ and $T \eqR{\ell} T'$ (\emph{$(k,\ell)$\hyp{}depth\hyp{}equivalence}).
\end{itemize}
We introduce the following notation for the number of equivalence classes of $\mathcal{T}_n$ with respect to the above equivalence relations:
\[
\NeqD{k}{n} := \card{\mathcal{T}_n / {\eqD{k}}},
\quad
\NeqL{k}{n} := \card{\mathcal{T}_n / {\eqL{k}}},
\quad
\NeqR{k}{n} := \card{\mathcal{T}_n / {\eqR{k}}},
\quad
\NeqLR{k}{\ell}{n} := \card{\mathcal{T}_n / {\eqLR{k}{\ell}}}.
\]
\end{definition}
\noindent It is clear that $T \eqL{k} T'$ if and only if $T^\mathrm{op} \eqR{k} T'{}^\mathrm{op}$, and consequently $\NeqL{k}{n} = \NeqR{k}{n}$.

\begin{example}
\label{ex:5-trees-2}
Let us consider the binary trees with five leaves, as shown in Figure~\ref{fig:5-trees}.
It is easy to verify that the only nontrivial $\eqL{3}$\hyp{}equivalence class is $\{T_5, T_8\}$, the only nontrivial $\eqR{3}$\hyp{}equivalence class is $\{T_1, T_{12}\}$, and the only nontrivial $\eqLR{2}{2}$\hyp{}equivalence class is $\{T_2, T_9\}$.
Moreover, the only nontrivial $\eqD{2}$\hyp{}equivalence classes are $\{T_2, T_9\}$, $\{T_3, T_{10}\}$, $\{T_4, T_{13}\}$, and $\{T_6, T_{11}\}$.
\end{example}

Regarding the number of equivalence classes of the $k$\hyp{}depth\hyp{}equivalence relation $\eqD{k}$, only a few particular cases are well understood.
The $0$\hyp{}depth\hyp{}equivalence relation $\eqD{0}$ is just the equality relation, so the numbers $\NeqD{0}{n}$ coincide with the Catalan numbers: $\NeqD{0}{n} = C_{n-1}$ for all $n \geq 1$.
The $1$\hyp{}depth\hyp{}equivalence relation $\eqD{1}$ is entirely trivial; all binary trees with $n$ leaves are $1$\hyp{}depth\hyp{}equivalent, so $\NeqD{1}{n} = 1$ for all $n \geq 1$.
The $2$\hyp{}depth\hyp{}equivalence was investigated by Huang, Mickey and Xu \cite{HuaMicXu-2017}, and the numbers $\NeqD{2}{n}$ were shown to be given by the sequence \href{http://oeis.org/A000975}{A000975} in \textit{The On\hyp{}Line Encyclopedia of Integer Sequences} (OEIS) \cite{oeis}, which is known to have several characterisations, for example,
for $n \geq 2$,
\[
\NeqD{2}{n}
= \left\lfloor \frac{2^n}{3} \right\rfloor
= \frac{2^{n+1} - 3 - (-1)^{n+1}}{6}
= \begin{cases}
\frac{2^n - 1}{3}, & \text{if $n$ is even,} \\
\frac{2^n - 2}{3}, & \text{if $n$ is odd.}
\end{cases}
\]
We are not aware of any results concerning moduli greater than $2$.
We have computed the values of $\NeqD{k}{n}$ for small $n$ and $k$ with the help of the GAP computer algebra system \cite{GAP} and present them in Table~\ref{tab:values}.
Apart from the first two rows, these sequences do not seem to match any entry in the OEIS\@.

\begin{table}
\begin{center}
\makebox[0pt]{%
\footnotesize
\begin{tabular}{rrrrrrrrrrrrrrrrc}
\toprule
$k$ \textbackslash{} $n$ & 1 & 2 & 3 & 4 & 5 & 6 & 7 & 8 & 9 & 10 & 11 & 12 & 13 & 14 & 15 & OEIS\\
\midrule
1 & 1 & 1 & 1 & 1 & 1 & 1 & 1 & 1 & 1 & 1 & 1 & 1 & 1 & 1 & 1 & A000012 \\
2 & 1 & 1 & 2 & 5 & 10 & 21 & 42 & 85 & 170 & 341 & 682 & 1\,365 & 2\,730 & 5\,461 & 10\,922 & A000975 \\
3 & 1 & 1 & 2 & 5 & 14 & 42 & 129 & 398 & 1\,223 & 3\,752 & 11\,510 & 35\,305 & 108\,217 & 331\,434 & 1\,014\,304 & \\
4 & 1 & 1 & 2 & 5 & 14 & 42 & 132 & 429 & 1\,429 & 4\,849 & 16\,689 & 58\,074 & 203\,839 & 720\,429 & 2\,560\,520 & \\
5 & 1 & 1 & 2 & 5 & 14 & 42 & 132 & 429 & 1\,430 & 4\,862 & 16\,795 & 58\,773 & 207\,906 & 742\,203 & 2\,670\,389 & \\
6 & 1 & 1 & 2 & 5 & 14 & 42 & 132 & 429 & 1\,430 & 4\,862 & 16\,796 & 58\,786 & 208\,011 & 742\,885 & 2\,674\,303 & \\
7 & 1 & 1 & 2 & 5 & 14 & 42 & 132 & 429 & 1\,430 & 4\,862 & 16\,796 & 58\,786 & 208\,012 & 742\,900 & 2\,674\,439 & \\
8 & 1 & 1 & 2 & 5 & 14 & 42 & 132 & 429 & 1\,430 & 4\,862 & 16\,796 & 58\,786 & 208\,012 & 742\,900 & 2\,674\,440 & \\
\midrule
$C_{n-1}$ & 1 & 1 & 2 & 5 & 14 & 42 & 132 & 429 & 1\,430 & 4\,862 & 16\,796 & 58\,786 & 208\,012 & 742\,900 & 2\,674\,440 & A000108 \\
\bottomrule
\end{tabular}
}
\end{center}

\bigskip
\caption{The number $\NeqD{k}{n}$ of binary trees with $n$ leaves up to $k$\hyp{}depth\hyp{}equivalence and Catalan numbers $C_{n-1}$.}
\label{tab:values}
\end{table}

In contrast, the number $\NeqL{k}{n}$ of $\eqL{k}$\hyp{}equivalence classes of $\mathcal{T}_n$ is well understood for any $k, n \in \IN_+$;
these numbers are given by the so\hyp{}called $k$\hyp{}modular Catalan numbers $C_{k,n}$ defined by Hein and Huang \cite{HeiHua-2017}:
$\NeqL{k}{n} = C_{k,n-1}$.
Closed formulas for modular Catalan numbers are known \cite[Theorem~1.1]{HeiHua-2017}:
\[
C_{k,n}
=
\sum_{\substack{\lambda \subseteq (k-1)^n \\ \card{\lambda} < n}} \frac{n - \card{\lambda}}{n} m_\lambda(1^n)
=
\sum_{0 \leq j \leq (n-1)/k} \frac{(-1)^j}{n} \binom{n}{j} \binom{2n - jk}{n + 1}.
\]
(For explanation of the symbols used in the first summation formula, please refer to \cite{HeiHua-2017}.)
In particular, $C_{2,n} = 2^{n-2}$ for $n \geq 2$.
It also holds that $C_{1,n} \leq C_{2,n} \leq C_{3,n} \leq \cdots$.
The numbers $\NeqL{k}{n} = C_{k,n-1}$ for $k, n \leq 15$ are evaluated in Table~\ref{table:L-eq}.

\begin{table}
\begin{center}
\makebox[0pt]{%
\footnotesize
\begin{tabular}{r*{15}{r}c}
\toprule
$k$ \textbackslash{} $n$ & 1 & 2 & 3 & 4 & 5 & 6 & 7 & 8 & 9 & 10 & 11 & 12 & 13 & 14 & 15 & OEIS \\
\midrule
 1 & 1 & 1 & 1 & 1 &  1 &  1 &   1 &   1 &      1 &      1 &       1 &       1 &        1 &        1 &           1 & A000012 \\
 2 & 1 & 1 & 2 & 4 &  8 & 16 &  32 &  64 &    128 &    256 &     512 &  1\,024 &   2\,048 &   4\,096 &      8\,192 & A011782 \\
 3 & 1 & 1 & 2 & 5 & 13 & 35 &  96 & 267 &    750 & 2\,123 &  6\,046 & 17\,303 &  49\,721 & 143\,365 &    414\,584 & A005773 \\
 4 & 1 & 1 & 2 & 5 & 14 & 41 & 124 & 384 & 1\,210 & 3\,865 & 12\,482 & 40\,677 & 133\,572 & 441\,468 & 1\,467\,296 & A159772 \\
 5 & 1 & 1 & 2 & 5 & 14 & 42 & 131 & 420 & 1\,375 & 4\,576 & 15\,431 & 52\,603 & 180\,957 & 627\,340 & 2\,189\,430 & A261588 \\
 6 & 1 & 1 & 2 & 5 & 14 & 42 & 132 & 428 & 1\,420 & 4\,796 & 16\,432 & 56\,966 & 199\,444 & 704\,146 & 2\,504\,000 & A261589 \\
 7 & 1 & 1 & 2 & 5 & 14 & 42 & 132 & 429 & 1\,429 & 4\,851 & 16\,718 & 58\,331 & 205\,632 & 731\,272 & 2\,620\,176 & A261590 \\
 8 & 1 & 1 & 2 & 5 & 14 & 42 & 132 & 429 & 1\,430 & 4\,861 & 16\,784 & 58\,695 & 207\,452 & 739\,840 & 2\,658\,936 & A261591 \\
 9 & 1 & 1 & 2 & 5 & 14 & 42 & 132 & 429 & 1\,430 & 4\,862 & 16\,795 & 58\,773 & 207\,907 & 742\,220 & 2\,670\,564 & A261592 \\
10 & 1 & 1 & 2 & 5 & 14 & 42 & 132 & 429 & 1\,430 & 4\,862 & 16\,796 & 58\,785 & 207\,998 & 742\,780 & 2\,673\,624 & \\
11 & 1 & 1 & 2 & 5 & 14 & 42 & 132 & 429 & 1\,430 & 4\,862 & 16\,796 & 58\,786 & 208\,011 & 742\,885 & 2\,674\,304 & \\
12 & 1 & 1 & 2 & 5 & 14 & 42 & 132 & 429 & 1\,430 & 4\,862 & 16\,796 & 58\,786 & 208\,012 & 742\,899 & 2\,674\,424 & \\
13 & 1 & 1 & 2 & 5 & 14 & 42 & 132 & 429 & 1\,430 & 4\,862 & 16\,796 & 58\,786 & 208\,012 & 742\,900 & 2\,674\,439 & \\
14 & 1 & 1 & 2 & 5 & 14 & 42 & 132 & 429 & 1\,430 & 4\,862 & 16\,796 & 58\,786 & 208\,012 & 742\,900 & 2\,674\,440 & \\
15 & 1 & 1 & 2 & 5 & 14 & 42 & 132 & 429 & 1\,430 & 4\,862 & 16\,796 & 58\,786 & 208\,012 & 742\,900 & 2\,674\,440 & \\
\midrule
$C_{n-1}$
   & 1 & 1 & 2 & 5 & 14 & 42 & 132 & 429 & 1\,430 & 4\,862 & 16\,796 & 58\,786 & 208\,012 & 742\,900 & 2\,674\,440 & A000108 \\
\bottomrule
\end{tabular}
}
\end{center}

\medskip
\caption{The number $\NeqL{k}{n}$ of binary trees with $n$ leaves up to $k$\hyp{}left\hyp{}depth\hyp{}equivalence, i.e., modular Catalan number $C_{k,n-1}$.}
\label{table:L-eq}
\end{table}

As for the number $\NeqLR{k}{\ell}{n}$ of $\eqLR{k}{\ell}$\hyp{}equivalence classes of $\mathcal{T}_n$,
Hein and Huang~\cite[Section~1, last paragraph]{HeiHua-2018,HeiHua-2022} conjectured, based on computational evidence, that
$\NeqLR{k}{\ell}{n} = \NeqL{k + \ell - 1}{n}$,
for all $k, \ell, n \geq 1$.
We have verified this with the help of a computer for $k, \ell, n \leq 14$.
Note that, even though the above equality might hold, the equivalence relations $\eqLR{k}{\ell}$ and $\eqL{k+\ell-1}$ themselves do not coincide, as illustrated by Example~\ref{ex:5-trees-2}.

\section{Equivalence of binary trees modulo a group}

Next we define a class of equivalence relations on binary trees that are determined by groups. 
As we shall see in Section~\ref{sect:grids}, these relations are of key importance in describing associative spectra of linear quasigroups.

\begin{definition}
\label{def:group-product-word}
Let $\alg{G} = (G,{\cdot})$ be a group with neutral element $1$.
For a family $(\gamma_i)_{i \in I}$ of elements of $G$ and a word $w \in I^*$, define the group element $\gamma_w$ by the following recursion: $\gamma_\varepsilon := 1$, and if $w = i w'$ for some $i \in I$ and $w' \in I^*$, then $\gamma_w := \gamma_i \cdot \gamma_{w'}$.
In other words, $\gamma_w$ is the image of $w$ under the unique monoid homomorphism from the free monoid $I^*$ to $\alg{G}$ that assigns $\gamma_i$ to $i$ for each $i \in I$.

Let $a, b \in G$, and let $T$ and $T'$ be binary trees with $n$ leaves.
Let $\gamma_0 := a$ and $\gamma_1 := b$.
We say that $T$ and $T'$ are \emph{$(a,b)$\hyp{}equivalent modulo $\alg{G}$,} and we write $T \eqabG{a}{b}{\alg{G}} T'$, if for all $i \in \nset{n}$, $\gamma_{\addr_T(i)} = \gamma_{\addr_{T'}(i)}$.
We denote by $\NeqabG{a}{b}{\alg{G}}{n}$ the number of $\eqabG{a}{b}{\alg{G}}$\hyp{}equivalence classes of binary trees with $n$ leaves.
\end{definition}

\begin{remark}
We can assume without loss of generality that $\alg{G}$ is generated by $a$ and $b$, as the elements outside the subgroup generated by $a$ and $b$ play no role in Definition~\ref{def:group-product-word}. 
Let us then consider the Cayley graph $\Gamma(G,\{a,b\})$ of $G$ corresponding to the generating set $\{a,b\}$.
Let us label the left and right edges of a binary tree $T$ by $a$ and $b$, respectively. Then there is a unique graph homomorphism $\psi_T\colon T \to \Gamma(G,\{a,b\})$ that preserves edge labels and maps the root of $T$ to the neutral element of $G$. 
The element $\gamma_{\addr_T(i)}$ defined above is just $\psi_T(i)$ for each leaf $i$ of $T$.
Thus $T \eqabG{a}{b}{\alg{G}} T'$ holds for given $n$-leaf binary trees $T$ and $T'$ if and only if $\psi_T(i) = \psi_{T'}(i)$ for all $i \in \nset{n}$.
\end{remark}

\begin{remark}
Of course some of the relations $\eqabG{a}{b}{\alg{G}}$ might coincide for different groups $\alg{G}$ and elements $a,b \in G$.
In fact, we will prove in Proposition~\ref{prop:group vs lin} that each of these relations coincides with a relation of the form $\eqabG{a}{b}{\alg{G}}$, where $\alg{G}$ is a two-generated Abelian group (see also Remark~\ref{remark:quotient group}). 
\end{remark}

\begin{example}
\label{ex:group equivalence}
The various equivalence relations on binary trees that we have seen in the previous section are special instances of $(a,b)$\hyp{}equivalence modulo some group $\alg{G}$.

\begin{enumerate}[label={(\roman*)}]
\item
Let $\alg{G} = (\ZZ_k, {+})$ for $k \in \IN$, and consider $\eqabG{a}{b}{\alg{G}}$.
With $a = 1$, $b = 1$, we get $k$\hyp{}depth\hyp{}equivalence $\eqD{k}$;
with $a = 1$, $b = 0$, we get $k$\hyp{}left\hyp{}depth\hyp{}equivalence $\eqL{k}$;
and
with $a = 0$, $b = 1$, we get $k$\hyp{}right\hyp{}depth\hyp{}equivalence $\eqR{k}$.

\item
For $k, \ell \in \IN$, taking $\alg{G} = (\ZZ_k, {+}) \times (\ZZ_\ell, {+})$, $a = (1,0)$, $b = (0,1)$, we get $(k,\ell)$\hyp{}depth\hyp{}equivalence $\eqLR{k}{\ell}$.
\end{enumerate}
\end{example}

Generalizing the first item of Example~\ref{ex:group equivalence}, let us consider the group $\alg{G} = (\ZZ_k, {+})$ with arbitrary $a, b \in \ZZ_k$. 
We denote the corresponding equivalence relation by $\eqabm{a}{b}{k}$, and we give an explicit definition of this relation below.

\begin{definition}
For $a, b, m \in \ZZ$, define the equivalence relation $\eqabm{a}{b}{m}$ on 
$\mathcal{T}_n$ by the following rule.
Assume that $T$ and $T'$ are binary trees in $\mathcal{T}_n$ and their leaves are $1, 2, \dots, n$ in the left\hyp{}to\hyp{}right order.
Set
\[
T \eqabm{a}{b}{m} T'
\mathrel{{:}{\Longleftrightarrow}}
\forall i \in \nset{n} \colon a \ld_T(i) + b \rd_T(i) \equiv a \ld_{T'}(i) + b \rd_{T'}(i) \pmod{m}.
\]
Let $\Neqabm{a}{b}{m}{n} := \card{\mathcal{T}_n / {\eqabm{a}{b}{m}}}$.
\end{definition}

We have enumerated the numbers $\Neqabm{a}{b}{m}{n}$ for small values of the parameters $a$, $b$, $m$, $n$ and present them in Appendix~\ref{app:Tlin} (see Table~\ref{tab:Nabmn}).
It remains an open problem to determine these numbers for arbitrary $a$, $b$, $m$, $n$.
The following two lemmata show some relationships between numbers of this form, as well as to the other variants of Catalan numbers of Definition~\ref{def:equivalences}.

\begin{lemma}
\label{lem:abm}
Let $a, b, m \in \ZZ$.
\begin{enumerate}[label={\upshape (\roman*)}]
\item\label{lem:abm:LandR}
${\eqabm{1}{0}{m}} = {\eqL{m}}$, ${\eqabm{0}{1}{m}} = {\eqR{m}}$, and ${\eqabm{1}{1}{m}} = {\eqD{m}}$;
consequently, $\Neqabm{1}{0}{m}{n} = \NeqL{m}{n}$, $\Neqabm{0}{1}{m}{n} = \NeqR{m}{n}$, and $\Neqabm{1}{1}{m}{n} = \NeqD{m}{n}$.
\item\label{lem:abm:bam}
$\Neqabm{a}{b}{m}{n} = \Neqabm{b}{a}{m}{n}$.
\item\label{lem:abm:scale}
For any $\ell \in \ZZ \setminus \{0\}$,
we have ${\eqabm{a}{b}{m}} = {\eqabm{\ell a}{\ell b}{\ell m}}$;
consequently, $\Neqabm{a}{b}{m}{n} = \Neqabm{\ell a}{\ell b}{\ell m}{n}$.
\item\label{lem:abm:unit}
If $\ell$ is a unit modulo $m$, then ${\eqabm{a}{b}{m}} = {\eqabm{\ell a}{\ell b}{m}}$;
consequently, $\Neqabm{a}{b}{m}{n} = \Neqabm{\ell a}{\ell b}{m}{n}$.
\item\label{lem:abm:LR}
If $\gcd(a,b) = 1$ and $m = ab$, then ${\eqabm{a}{b}{m}} = {\eqLR{b}{a}}$;
consequently, $\Neqabm{a}{b}{m}{n} = \NeqLR{b}{a}{n} = \NeqLR{a}{b}{n}$.
\end{enumerate}
\end{lemma}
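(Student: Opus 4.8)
The plan is to prove all five items by unwinding the definition of $\eqabm{a}{b}{m}$ leaf by leaf; in each case the statement reduces to an elementary fact about integer congruences.

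\emph{Items \ref{lem:abm:LandR}--\ref{lem:abm:unit}.} Item~\ref{lem:abm:LandR} is immediate: for any leaf $i$ of a binary tree $T$ we have $1\cdot\ld_T(i)+0\cdot\rd_T(i)=\ld_T(i)$, $0\cdot\ld_T(i)+1\cdot\rd_T(i)=\rd_T(i)$, and $1\cdot\ld_T(i)+1\cdot\rd_T(i)=d_T(i)$ (every step on the path from the root to $i$ is either a left or a right step), so the defining condition of $\eqabm{1}{0}{m}$, $\eqabm{0}{1}{m}$, $\eqabm{1}{1}{m}$ is literally that of $\eqL{m}$, $\eqR{m}$, $\eqD{m}$, respectively, and equal relations give equal numbers of classes. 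For item~\ref{lem:abm:bam} I would use that $T\mapsto T^{\mathrm{op}}$ is a bijection of $\mathcal{T}_n$ onto itself and that, since $T^{\mathrm{op}}$ is obtained from $T$ by reflection over a vertical line, $\ld_{T^{\mathrm{op}}}(i)=\rd_T(n+1-i)$ and $\rd_{T^{\mathrm{op}}}(i)=\ld_T(n+1-i)$ for every leaf $i$; reindexing by $i\mapsto n+1-i$ then shows that $T\eqabm{a}{b}{m}T'$ if and only if $T^{\mathrm{op}}\eqabm{b}{a}{m}T'{}^{\mathrm{op}}$, so $\cdot^{\mathrm{op}}$ induces a bijection between the two sets of equivalence classes. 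Items~\ref{lem:abm:scale} and~\ref{lem:abm:unit} rest on the observation that for fixed $x,y\in\ZZ$ and $\ell\neq0$ one has $x\equiv y\pmod m\iff\ell x\equiv\ell y\pmod{\ell m}$, and, if in addition $\ell$ is a unit modulo $m$, $x\equiv y\pmod m\iff\ell x\equiv\ell y\pmod m$; applying this with $x=a\ld_T(i)+b\rd_T(i)$, $y=a\ld_{T'}(i)+b\rd_{T'}(i)$ and using $\ell x=(\ell a)\ld_T(i)+(\ell b)\rd_T(i)$ gives the stated equalities of relations, hence of their class counts.

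\emph{Item \ref{lem:abm:LR}.} The only point that needs an argument is the elementary claim that, when $\gcd(a,b)=1$, for all $L,R\in\ZZ$ we have $ab\mid aL+bR$ if and only if $b\mid L$ and $a\mid R$. One direction is clear, since $b\mid L$ and $a\mid R$ give $ab\mid aL$ and $ab\mid bR$. For the other, from $ab\mid aL+bR$ we get $a\mid bR$, and $\gcd(a,b)=1$ makes $b$ invertible modulo $a$, whence $a\mid R$; symmetrically $b\mid L$. Taking $L=\ld_T(i)-\ld_{T'}(i)$ and $R=\rd_T(i)-\rd_{T'}(i)$ for each leaf $i$, this claim shows that $a\ld_T(i)+b\rd_T(i)\equiv a\ld_{T'}(i)+b\rd_{T'}(i)\pmod{ab}$ for all $i$ exactly when $\ld_T(i)\equiv\ld_{T'}(i)\pmod b$ and $\rd_T(i)\equiv\rd_{T'}(i)\pmod a$ for all $i$, i.e.\ exactly when $T\eqL{b}T'$ and $T\eqR{a}T'$; thus ${\eqabm{a}{b}{ab}}={\eqLR{b}{a}}$ and $\Neqabm{a}{b}{m}{n}=\NeqLR{b}{a}{n}$. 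Finally $\NeqLR{b}{a}{n}=\NeqLR{a}{b}{n}$: since $T\eqL{k}T'\iff T^{\mathrm{op}}\eqR{k}T'{}^{\mathrm{op}}$ (as noted after Definition~\ref{def:equivalences}), the map $\cdot^{\mathrm{op}}$ sends $\eqLR{b}{a}$-classes bijectively to $\eqLR{a}{b}$-classes; alternatively, combine $\Neqabm{a}{b}{m}{n}=\NeqLR{b}{a}{n}$ with item~\ref{lem:abm:bam}.

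\emph{Main obstacle.} There is no serious difficulty here; the only places calling for care are the index substitution $i\mapsto n+1-i$ for the opposite tree (used in~\ref{lem:abm:bam} and again at the end of~\ref{lem:abm:LR}) and the ``coprime splitting'' $ab\mid aL+bR\iff(b\mid L\text{ and }a\mid R)$, which is exactly where the hypotheses $\gcd(a,b)=1$ and $m=ab$ enter.
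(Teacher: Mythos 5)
Your proof is correct and follows essentially the same route as the paper's: items (iii) and (iv) by the same congruence equivalences, item (ii) via the opposite-tree bijection (you just make the reindexing $i\mapsto n+1-i$ explicit where the paper writes it compactly), and item (v) by the same coprime splitting of the modulus, which the paper phrases via the Chinese remainder theorem while you give the equivalent direct divisibility argument. The only cosmetic difference is that for item (i) you verify the identities $\ld$, $\rd$, $\ld+\rd=d$ directly, whereas the paper simply cites its Example on equivalence modulo $(\ZZ_k,{+})$.
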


\begin{proof}
\ref{lem:abm:LandR}
This is Example~\ref{ex:group equivalence}.

\ref{lem:abm:bam}
Since for all $T, T' \in \mathcal{T}_n$,
\begin{align*}
T \eqabm{a}{b}{m} T'
& \iff
\forall i \in \nset{n} \colon a \ld_T(i) + b \rd_T(i) \equiv a \ld_{T'}(i) + b \rd_{T'}(i) \pmod{m}
\\ & \iff
\forall i \in \nset{n} \colon a \rd_{T^\mathrm{op}}(i) + b \ld_{T^\mathrm{op}}(i) \equiv a \rd_{T'{}^\mathrm{op}}(i) + b \ld_{T'{}^\mathrm{op}}(i) \pmod{m}
\\ & \iff
T^\mathrm{op} \eqabm{b}{a}{m} T'{}^\mathrm{op},
\end{align*}
we conclude that the map $T \mapsto T^\mathrm{op}$ induces a bijection between $\mathcal{T}_n / {\eqabm{a}{b}{m}}$ and $\mathcal{T}_n / {\eqabm{b}{a}{m}}$ for each $n \in \IN_{+}$.

\ref{lem:abm:scale}
Clear because the congruences $a x + b y \equiv 0 \pmod{m}$ and $\ell a x + \ell b y \equiv 0 \pmod{\ell m}$ are equivalent.

\ref{lem:abm:unit}
Since $\ell$ is a unit modulo $m$, the congruences $a x + b y \equiv 0 \pmod{m}$ and $\ell a x + \ell b y \equiv 0 \pmod{m}$ are equivalent.

\ref{lem:abm:LR}
Since $\gcd(a,b) = 1$, it follows from the Chinese remainder theorem that $ax + by \equiv 0 \pmod{ab}$ is equivalent to $ax + by \equiv 0 \pmod{a}$ and $ax + by \equiv 0 \pmod{b}$, which in turn is equivalent to $ax \equiv 0 \pmod{b}$ and $by \equiv 0 \pmod{a}$.
From $\gcd(a,b) = 1$, it follows that $a$ is a unit modulo $b$ and $b$ is a unit modulo $a$; hence the last pair of congruences is equivalent to $x \equiv 0 \pmod{b}$ and $y \equiv 0 \pmod{a}$.
We conclude that $T \eqabm{a}{b}{m} T'$ if and only if $T \eqLR{b}{a} T'$, as claimed.
\end{proof}

\begin{lemma}
\label{lem:cirmi}
For all $a, b, m, n \in \ZZ$ with $\gcd(a,m)=1$, we have $\Neqabm{a}{b}{m}{n} \geq \NeqR{m}{n}$.
\end{lemma}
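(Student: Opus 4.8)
The plan is to put the relation into a convenient normal form and then to exhibit an explicit family of $\NeqR{m}{n}$ binary trees with $n$ leaves that are pairwise inequivalent under it. First I would dispose of the reduction: since $\gcd(a,m)=1$ there is $a'\in\ZZ$ with $aa'\equiv 1\pmod m$, so Lemma~\ref{lem:abm}\ref{lem:abm:unit} gives ${\eqabm{a}{b}{m}}={\eqabm{a'a}{a'b}{m}}={\eqabm{1}{c}{m}}$ with $c:=a'b$, the last equality because $\eqabm{p}{q}{m}$ depends only on $p,q$ modulo $m$ and $a'a\equiv 1\pmod m$. (For $m\le 0$ one reads ``$\pmod m$'' as ``$\pmod{\card{m}}$''; if $m=0$ then $a=\pm1$, and the argument below shows $\eqabm{1}{c}{0}$ is equality, so $\Neqabm{a}{b}{0}{n}=\NeqR{0}{n}$.) Since $\NeqR{m}{n}=\NeqL{m}{n}$, it then remains to prove $\Neqabm{1}{c}{m}{n}\geq\NeqL{m}{n}$ for an arbitrary $c\in\ZZ$; we may assume $n\geq 3$. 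I would do this by producing a transversal of $\eqL{m}$ on $\mathcal{T}_n$ that is simultaneously a set of pairwise $\eqabm{1}{c}{m}$-inequivalent trees.

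Recall that the left\hyp{}depth sequences of trees in $\mathcal{T}_n$ are precisely the sequences $(\ell_1,\dots,\ell_n)$ of nonnegative integers with $\ell_n=0$, $\ell_i\geq 1$ for $i<n$, and $\ell_i\leq\ell_{i+1}+1$ for $i<n$ (the reversed zag sequences). I would show that every $\eqL{m}$-class $\mathcal{C}\subseteq\mathcal{T}_n$ contains a (unique) tree $T_{\mathcal{C}}$ whose left\hyp{}depth sequence is componentwise minimal in $\{\ld_T:T\in\mathcal{C}\}$. To this end, let $(\lambda_1,\dots,\lambda_n)$ be the common modular left\hyp{}depth sequence of the trees in $\mathcal{C}$ and define $\ell_1,\dots,\ell_n$ greedily from the left: $\ell_1$ is the least positive integer congruent to $\lambda_1$ modulo $m$; for $1<i<n$, $\ell_i$ is the least integer $\geq\max(1,\ell_{i-1}-1)$ congruent to $\lambda_i$; and $\ell_n$ is the least nonnegative integer $\geq\ell_{n-1}-1$ congruent to $0$. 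Comparing componentwise with the left\hyp{}depth sequence of any tree of $\mathcal{C}$ (one exists, as $\mathcal{C}\ne\emptyset$), an easy induction shows $(\ell_i)$ is dominated by it; in particular $\ell_{n-1}\le 1$, whence $\ell_{n-1}=1$ and then $\ell_n=0$, and directly from the construction $(\ell_i)$ satisfies the three conditions above, so it is the left\hyp{}depth sequence of a tree $T_{\mathcal{C}}\in\mathcal{C}$, componentwise minimal in the class. Put $\mathcal{S}:=\{T_{\mathcal{C}}:\mathcal{C}\text{ an }\eqL{m}\text{-class of }\mathcal{T}_n\}$, so that $\card{\mathcal{S}}=\NeqL{m}{n}$.

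Then, given distinct $T,T'\in\mathcal{S}$, I would argue as follows: they lie in different $\eqL{m}$-classes, so $\ld_T(i)\not\equiv\ld_{T'}(i)\pmod m$ for some $i$, and we let $i^{*}$ be the least such index. The point is that the greedy recursion computes $\ell_i$ from $\ell_{i-1}$ and $\lambda_i$ only, so, since the modular left\hyp{}depth sequences of $T$ and $T'$ agree in positions $1,\dots,i^{*}-1$, we get $\ld_T(i)=\ld_{T'}(i)$ as integers for $i<i^{*}$; hence $i^{*}$ is also the least index where the integer left depths of $T$ and $T'$ differ. By Lemma~\ref{lem:da}\ref{lem:da:l} the addresses of leaves $1,\dots,i^{*}-1$ coincide in $T$ and $T'$, and by Lemma~\ref{lem:dr}\ref{lem:dr:l} we get $\rd_T(i^{*})=\rd_{T'}(i^{*})$. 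Therefore
\[
\bigl(\ld_T(i^{*})+c\,\rd_T(i^{*})\bigr)-\bigl(\ld_{T'}(i^{*})+c\,\rd_{T'}(i^{*})\bigr)=\ld_T(i^{*})-\ld_{T'}(i^{*})\not\equiv 0\pmod m,
\]
so $T\not\eqabm{1}{c}{m}T'$. Thus $\mathcal{S}$ is a set of $\NeqL{m}{n}$ pairwise $\eqabm{1}{c}{m}$-inequivalent trees, which yields $\Neqabm{1}{c}{m}{n}\geq\NeqL{m}{n}=\NeqR{m}{n}$, as required. (For $m=0$ the same computation, with $i^{*}$ the least index where $\ld_T$ and $\ld_{T'}$ differ, shows directly that distinct trees in $\mathcal{T}_n$ are never $\eqabm{1}{c}{0}$-equivalent.)

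The step I expect to be the crux is the construction of $\mathcal{S}$: one needs representatives whose integer left\hyp{}depth sequences are no longer than their modular reductions force them to be, so that two representatives agree componentwise on exactly the prefix where their modular left\hyp{}depth sequences agree; the componentwise\hyp{}minimal (equivalently, lexicographically least) lift produced by the greedy left\hyp{}to\hyp{}right recursion achieves this, but verifying that this lift is actually realized by a binary tree uses both the local characterization of left\hyp{}depth sequences and the domination argument against a lift known to be realizable. Everything else should be routine: the reduction to $a=1$ via Lemma~\ref{lem:abm}\ref{lem:abm:unit} together with $\NeqR{m}{n}=\NeqL{m}{n}$, and the extraction of the witnessing leaf $i^{*}$ via Lemmas~\ref{lem:da} and~\ref{lem:dr}.
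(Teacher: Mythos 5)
Your proof is correct, but it takes a genuinely different route to the counting step than the paper does. After the same reduction to $a=1$ via Lemma~\ref{lem:abm}\ref{lem:abm:unit} and the identity $\NeqR{m}{n}=\NeqL{m}{n}$, the paper invokes Hein and Huang's result \cite[Proposition~3.3]{HeiHua-2017} that the modular Catalan number counts the trees avoiding the comb $\comb^1_m$, and then proves that any two distinct $\comb^1_m$\hyp{}avoiding trees are $\eqabm{1}{b}{m}$\hyp{}inequivalent; the delicate part there is the case where the first leaf at which the left depths differ has left depths that are still congruent modulo $m$, which forces a descent into a subtree of one of the trees. You instead construct an explicit transversal of the $\eqL{m}$\hyp{}classes: the greedy, componentwise\hyp{}minimal lift of each modular left\hyp{}depth sequence, justified by the reversed\hyp{}zag characterisation of left\hyp{}depth sequences (the same fact the paper cites from \cite{CsaWal-2000,LehWal-2021}) together with the domination argument that guarantees $\ell_{n-1}=1$ and $\ell_n=0$. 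Because your representatives are determined prefix\hyp{}by\hyp{}prefix by the modular data, the first index where the modular left depths of two representatives differ is also the first index where their integer left depths differ, so Lemmata~\ref{lem:da} and \ref{lem:dr} immediately give a leaf with equal right depths and incongruent left depths — exactly the witnessing leaf the paper also seeks, but obtained without the comb\hyp{}avoidance input and without the subtree case analysis. The trade\hyp{}off: the paper's argument is shorter given the external enumeration result, while yours is more self\hyp{}contained at the cost of verifying that the greedy lift is realisable by a tree; both hinge on the same final observation and on the same preparatory lemmas, and your treatment of the degenerate moduli $m\in\{0,\pm1\}$ is a harmless addition the paper leaves implicit.
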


\begin{proof}
Since $\gcd(a,m) = 1$, $a$ has a multiplicative inverse $a^{-1}$ modulo $m$.
By Lemma~\ref{lem:abm}\ref{lem:abm:unit}, $\Neqabm{1}{a^{-1} b}{m}{n} = \Neqabm{a}{b}{m}{n}$, so we may assume that $a = 1$.
It is known that $\NeqR{k}{n} = \NeqL{k}{n} = C_{k,n-1}$ and that the $k$-modular Catalan number $C_{k,n}$ counts the number of binary trees with $n + 1$ leaves that avoid (i.e., do not contain as a subgraph) the binary tree $\comb^1_k$
(see Hein, Huang \cite[Proposition~3.3]{HeiHua-2017}).
Here, $\comb^1_k$ is the tree shown on the left in Figure~\ref{fig:left-depth-mod-k}.
(Note that we do not require that $\comb^1_k$ is not contained as a \emph{subtree}; see Subsection~\ref{subs:binary-trees}.)
Therefore, in order to prove the inequality $\Neqabm{1}{b}{m}{n} \geq \NeqR{m}{n}$, it suffices to show that if $T$ and $T'$ are distinct binary trees with $n$ leaves that avoid $\comb^1_m$, then $T \not\eqabm{1}{b}{m} T'$.
To this end, we are going to find a leaf $\ell$ satisfying $(\ld_T(\ell) - \ld_{T'}(\ell), \rd_T(\ell) - \rd_{T'}(\ell)) = (d,0)$ with $d \not\equiv 0 \pmod{m}$; then $1 \cdot d + b \cdot 0 = d \not\equiv 0 \pmod{m}$, from which it follows that $T \not\eqabm{1}{b}{m} T'$.

Let $T, T' \in \mathcal{T}_n$ with $T \neq T'$, and assume that $T$ and $T'$ avoid $\comb^1_m$.
Since $T \neq T'$, there exists, by Lemma~\ref{lem:da}, a leaf $i$ such that $\ld_T(i) \neq \ld_{T'}(i)$; let $\ell$ be the leftmost such leaf.
By Lemma~\ref{lem:dr}, $(\ld_T(\ell) - \ld_{T'}(\ell), \rd_T(\ell) - \rd_{T'}(\ell)) = (d,0)$ for some nonzero $d$.
Consider first the case that $\ell > 1$.
By Lemmata~\ref{lem:addresses} and \ref{lem:da}, $\addr_T(\ell-1) = \addr_{T'}(\ell-1) = u 0 1^p$, $\addr_T(\ell) = u 1 0^q$, $\addr_{T'}(\ell) = u 1 0^{q'}$, where $u$ is the address of the deepest common ancestor of leaves $\ell - 1$ and $\ell$ in $T$, or, equivalently, in $T'$.
By the choice of $\ell$, we have $q \neq q'$.
Since $T$ and $T'$ avoid $\comb^1_m$, we have $0 \leq q \leq m-1$ and $0 \leq q' \leq m-1$.
Therefore, $0 < \lvert \ld_T(\ell) - \ld_{T'}(\ell) \rvert < m$, so $d \not\equiv 0 \pmod{m}$.
Thus, $\ell$ is the leaf we were looking for.

Assume now that $\ell = 1$.
If $\ld_T(1) \not\equiv \ld_{T'}(1) \pmod{m}$, then $1$ is the leaf we were looking for, as $\rd_T(1) = \rd_{T'}(1) = 0$.
Assume now that $\ld_T(1) \equiv \ld_{T'}(1) \pmod{m}$; without loss of generality, assume that $\ld_T(1) > \ld_{T'}(1)$, so $\ld_T(1) = \ld_{T'}(1) + tm$ for some $t \in \IN_{+}$.
Let $r$ be the vertex in $T$ at depth $tm$ along the path from the root to leaf $1$, and let $T^*$ be the subtree of $T$ rooted at $r$.
Suppose $T^*$ has $s$ leaves; they are the $s$ leftmost leaves of $T$.
Then for each $i \in \nset{s}$, we have $\ld_{T^*}(i) = \ld_T(i) - tm$ and $\rd_{T^*}(i) = \rd_T(i)$.
The left depth sequence of $T^*$ cannot coincide with the first $s$ terms of the left depth sequence of $T'$.
(Because only the rightmost leaf of a binary tree has left depth $0$, we must have $\ld_{T^*}(s) = 0 < \ld_{T'}(s)$.)
Let $\ell'$ be the leftmost leaf $i$ with $\ld_{T^*}(i) \neq \ld_{T'}(i)$.
A similar argument as in the previous paragraph shows that $0 < \lvert \ld_{T^*}(\ell') - \ld_{T'}(\ell') \rvert < m$ and $\rd_{T^*}(\ell') - \rd_{T'}(\ell') = 0$.
Therefore $\ld_{T}(\ell') - \ld_{T'}(\ell') \not\equiv 0 \pmod{m}$ and $\rd_{T}(\ell') - \rd_{T'}(\ell') = 0$, so $\ell'$ is the leaf we were looking for.
\end{proof}

We saw in Example~\ref{ex:group equivalence} several groups that give rise to nontrivial equivalence relations on binary trees.
We conclude this section by a proposition relating random walks on binary trees to equivalence modulo the multiplicative group of nonzero real numbers that illustrates that the equivalence relations corresponding to groups can often be trivial.

\begin{proposition}
\label{prop:random walk}
Let $\RR^\ast$ denote the multiplicative group of nonzero real numbers, and let $0<p<1$.
For arbitrary binary trees $T, T' \in \mathcal{T}_n$, we have $T \eqabG{p}{1-p}{\RR^\ast} T'$ if and only if $T=T'$.
\end{proposition}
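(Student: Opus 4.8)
The plan is to identify the group element $\gamma_{\addr_T(i)}$ attached to leaf $i$ with the probability that a random walk from the root of $T$, going left with probability $p$ and right with probability $1-p$ at each internal vertex, ends at leaf $i$, and then to show that this probability vector already determines the tree. Concretely, since $\gamma_0 = p$, $\gamma_1 = 1-p$, and $\gamma$ is multiplicative along words, we have $\gamma_{\addr_T(i)} = p^{\ld_T(i)}\,(1-p)^{\rd_T(i)}$, because $\ld_T(i)$ (resp.\ $\rd_T(i)$) is the number of $0$'s (resp.\ $1$'s) in $\addr_T(i)$. The implication $T = T' \Rightarrow T \eqabG{p}{1-p}{\RR^\ast} T'$ is then immediate, so the content is the converse.

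I would prove the converse by contraposition. Assume $T \neq T'$. By Lemma~\ref{lem:da}\ref{lem:da:l}, a binary tree is determined by its left depth sequence, so the left depth sequences of $T$ and $T'$ differ; let $i \in \nset{n}$ be the least index with $\ld_T(i) \neq \ld_{T'}(i)$. By Lemma~\ref{lem:dr}\ref{lem:dr:l}, this $i$ satisfies $\rd_T(i) = \rd_{T'}(i)$; write $r := \rd_T(i) = \rd_{T'}(i)$. Then $\gamma_{\addr_T(i)} = p^{\ld_T(i)}(1-p)^r$ and $\gamma_{\addr_{T'}(i)} = p^{\ld_{T'}(i)}(1-p)^r$. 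Since $0 < p < 1$ we have $(1-p)^r \neq 0$ and the map $\IN \to \RR$, $k \mapsto p^k$, is strictly decreasing, hence injective; as $\ld_T(i) \neq \ld_{T'}(i)$, the two group elements are distinct, so $T \not\eqabG{p}{1-p}{\RR^\ast} T'$, as required.

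The only step that takes a moment's thought — and the place one could go wrong — is the choice of the witnessing leaf. At an arbitrary leaf where the left depths disagree one would be faced with deciding when $p^{a}(1-p)^{b} = p^{c}(1-p)^{d}$ can hold with $a \neq c$, which genuinely can occur for special values of $p$ (for instance $p = \tfrac12$, where $\gamma_{\addr_T(i)}$ depends only on the total depth $d_T(i)$). Invoking Lemma~\ref{lem:dr} at the \emph{leftmost} disagreeing leaf removes this difficulty completely, since there the right depths coincide and the identity collapses to $p^{a} = p^{c}$, which forces $a = c$. This is essentially the whole proof; no lengthy computation is needed.
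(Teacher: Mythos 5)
Your proof is correct, but it takes a genuinely different route from the paper. The paper keeps the probabilistic picture throughout: it argues by induction on the number of leaves that the whole tree can be \emph{reconstructed} from the leaf weights $\gamma_{\addr_T(i)}=p^{\ld_T(i)}(1-p)^{\rd_T(i)}$ -- the size $k$ of the left subtree is recovered as the unique $k$ with $\sum_{i=1}^{k}\gamma_{\addr_T(i)}=p$, then the weights are renormalised by $p$ and $1-p$ and the induction hypothesis is applied to the two subtrees. You instead argue by contraposition and locally: using Lemma~\ref{lem:da} to see that distinct trees have distinct left depth sequences, and Lemma~\ref{lem:dr}\ref{lem:dr:l} to find the leftmost disagreeing leaf, where the right depths coincide, so equality of weights would force $p^{a}=p^{c}$ with $a\neq c$, impossible for $0<p<1$. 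Your handling of the one delicate point (that $p^{a}(1-p)^{b}=p^{c}(1-p)^{d}$ can genuinely hold with $a\neq c$, e.g.\ $p=\tfrac12$) is exactly right, and the choice of the leftmost disagreeing leaf disposes of it. What each approach buys: your argument is shorter, needs only that $p$ has infinite order in $\RR^\ast$, and in effect shows that $\Lambda_{T,T'}$ contains a nonzero point $(d,0)$ while $\pairs{\RR^\ast}{p}{1-p}$ meets the $x$-axis only at the origin -- it is the same technique the paper deploys later in Lemma~\ref{lem:cirmi} and Lemma~\ref{lem:treealisable-1}, so from the paper's perspective it slightly front-loads that machinery. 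The paper's reconstruction proof, by contrast, is self-contained at that point in the text and yields the additional payoff recorded in Remark~\ref{remark:random walk} (the identity $\sum_i p^{\ld_T(i)}(1-p)^{\rd_T(i)}=1$ and the Kraft-type recovery of a tree from its depth sequence), which the paper reuses afterwards.
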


\begin{proof}
Let us perform a random walk on a binary tree $T \in \mathcal{T}_n$, starting at the root, and moving to the left or to the right child of the current vertex with probabilities $p$ and $1-p$, respectively. 
Of course, the walk ends when we reach a leaf.
The probability of ending at leaf $i$ is $p^{\ld_T(i)} \cdot (1-p)^{\rd_T(i)}$, which is nothing else but $\gamma_{\addr_T(i)}$ from Definition~\ref{def:group-product-word}, when applied to the case $\alg{G}=\RR^\ast$, $a=p$, $b=1-p$.
Therefore, it suffices to prove that the structure of the tree $T$ can be recovered from the ``leaf probabilities" $\gamma_{\addr_T(i)} = p^{\ld_T(i)} \cdot (1-p)^{\rd_T(i)}\ (i=1,\dots,n)$. 

We prove this by induction on the number of leaves. 
In the cases $n=1$ and $n=2$ we have nothing to prove, as there is only one binary tree with one, respectively two leaves.
Now let $n \geq 3$, and assume that trees with less than $n$ leaves are uniquely determined by their leaf probabilities.
Let $T \in \mathcal{T}_n$ have left and right subtrees $T_0$ and $T_1$ of sizes $k$ and $n-k$, respectively, i.e., $T = T_0 \wedge T_1$ with $T_0 \in \mathcal{T}_k$ and $T_1 \in \mathcal{T}_{n-k}$. 
The random walk starting at the root of $T$ ends at one of the leaves of $T_0$ if and only if the first step was taken to the left child of the root, and this event has probability $p$. 
Therefore, $k$ is the unique integer such that $\sum_{i=1}^k \gamma_{\addr_T(i)} = p$.
This way we have computed $k$ from the leaf probabilities of $T$, and then we can calculate the leaf probabilities of $T_0$ and $T_1$ as follows:
\begin{align*}
\gamma_{\addr_{T_0}(i)} &= \gamma_{\addr_T(i)} / p &&(i=1,\dots,k); \\ 
\gamma_{\addr_{T_1}(i)} &= \gamma_{\addr_T(k+i)} / (1-p) &&(i=1,\dots,n-k). 
\end{align*}
Now, applying the induction hypothesis to $T_0$ and $T_1$, we see that $T = T_0 \wedge T_1$ is indeed uniquely determined by the leaf probabilities $\gamma_{\addr_T(i)}$.
\end{proof}

\begin{remark}
\label{remark:random walk}
Since the random walk used in the proof above will eventually end at some leaf, we have $\sum_{i=1}^n p^{\ld_T(i)} \cdot (1-p)^{\rd_T(i)} = 1$.
In the case $p=1/2$, this becomes $\sum_{i=1}^n 2^{-d_T(i)} = 1$, and Proposition~\ref{prop:random walk} shows that binary trees are determined by their depth sequences, both of which are well-known facts (cf. also Kraft's inequality for prefix-free codes).
\end{remark}

%%%%%%%%%%%%%%%%%%%%%%%%%%%%%%%%%%%%%%%
%%%%%%%%%%%%%%%%%%%%%%%%%%%%%%%%%%%%%%%
\section{Linear quasigroups}

\subsection{Affine quasigroups}

A quasigroup $\alg{A} = (A, {\circ})$ is \emph{affine} over a (possibly nonabelian) group $(A, {+})$ if there exist automorphisms $\varphi_0, \varphi_1 \in \Aut(A, {+})$, and a constant $c \in A$ such that $x \circ y = \varphi_0(x) + c + \varphi_1(y)$.
If $c = 0$ in the above, then $\alg{A}$ is \emph{linear} over $(A, {+})$.
(Note that some authors use the term ``linear quasigroup" for the general case $x \circ y = \varphi_0(x) + c + \varphi_1(y)$ with arbitrary $c$.)
The quintuple $(A, {+}, \varphi_0, \varphi_1, c)$ is called an \emph{arithmetic form} of $\alg{A}$.
It is well known (see, e.g., \cite[Lemmata 2.6 and 2.7]{JedStaVoj-2017}) that
\begin{itemize}
\item an affine quasigroup with arithmetic form $(A, {+}, \varphi_0, \varphi_1, c)$ is idempotent if and only if $c = 0$ and $\varphi_0 + \varphi_1 = \id_A$ (pointwise addition of functions on the left side);
\item every medial quasigroup (i.e., a quasigroup that satisfies the identity \linebreak $(xy)(uv) \approx (xu)(yv)$) is affine; moreover, the affine quasigroup with arithmetic form $(A, {+}, \varphi_0, \varphi_1, c)$ is medial  if and only if $(A, {+})$ is an abelian group and $\varphi_0 \varphi_1 = \varphi_1 \varphi_0$ (this was proved independently by Bruck~\cite{Bruck-1944}, Murdoch~\cite{Murdoch-1941} and Toyoda~\cite{Toyoda-1941}).
\end{itemize}

\subsection{Bracketings over linear quasigroups}
\label{subsec:linear}

Let $S_A$ denote the symmetric group on the set $A$, and let $\varphi_i \in S_A\ (i \in I)$ be a family of permutations of $A$.
Definition~\ref{def:group-product-word} then gives permutations $\varphi_w$ for each string $w \in I^*$: $\varphi_\varepsilon := \id_A$, and if $w = i w'$ for some $i \in I$ and $w' \in I^*$, then $\varphi_w := \varphi_i \varphi_{w'}$.
In the following we will use this definition for $I=\{0,1\}$ with $\varphi_0$ and $\varphi_1$ being the automorphisms occuring in the arithmetic form of the linear quasigroup under consideration (of course, in this case we can take the group $\Aut(A, {+})$ instead of $S_A$).
This notation allows us to conveniently describe when a given linear quasigroup satisfies a given bracketing identity.

\begin{proposition}
\label{prop:linear-gen}
Let $\alg{A} = (A, {\circ})$ be a linear quasigroup over a group $(A, {+})$ with arithmetic form $(A, {+}, \varphi_0, \varphi_1, 0)$.
Let $t, t' \in B_n$, and let $T, T' \in \mathcal{T}_n$ be the corresponding binary trees.
\begin{enumerate}[label={\upshape{(\roman*)}}]
\item\label{prop:linear-gen:term-operation}
$t^\alg{A}(a_1, \dots, a_n) = \varphi_{\addr_T(1)}(a_1) + \varphi_{\addr_T(2)}(a_2) + \dots + \varphi_{\addr_T(n)}(a_n)$.
\item\label{prop:linear-gen:satisfy}
$\alg{A}$ satisfies $t \approx t'$ if and only if for all $i \in \nset{n}$, $\varphi_{\addr_T(i)} = \varphi_{\addr_{T'}(i)}$.
\end{enumerate}
\end{proposition}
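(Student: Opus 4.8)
The plan is to establish part~\ref{prop:linear-gen:term-operation} first, by structural induction on the bracketing $t$ (equivalently, induction on the number $n$ of leaves of $T$), and then to read off part~\ref{prop:linear-gen:satisfy} as a fairly immediate consequence of it.

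For the base case $n = 1$ the only bracketing is $x_1$, whose tree is a single vertex with $\addr_T(1) = \varepsilon$, and since $\varphi_\varepsilon = \id_A$ both sides of the claimed identity equal $a_1$. For the inductive step I would write $t = (t_1 t_2)$ with $t_1$ involving $x_1, \dots, x_k$ and $t_2$ involving $x_{k+1}, \dots, x_n$, and set $T_1 = \tau(t_1)$, $T_2 = \tau(t_2)$, so that $T = T_1 \wedge T_2$. After relabelling $x_{k+j} \mapsto x_j$ in $t_2$ (a bookkeeping step needed so that the right subterm becomes an element of $B_{n-k}$), the induction hypothesis applies to $t_1$ and to this relabelled $t_2$. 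Then, using $x \circ y = \varphi_0(x) + \varphi_1(y)$ together with the fact that $\varphi_0, \varphi_1$ are group automorphisms and hence distribute over $+$, one expands $t^{\alg{A}}(a_1, \dots, a_n) = \varphi_0\bigl(t_1^{\alg{A}}(a_1, \dots, a_k)\bigr) + \varphi_1\bigl(t_2^{\alg{A}}(a_{k+1}, \dots, a_n)\bigr)$ into a sum of terms $\varphi_0 \varphi_{\addr_{T_1}(i)}(a_i)$ and $\varphi_1 \varphi_{\addr_{T_2}(j)}(a_{k+j})$. Since leaf $i \leq k$ of $T$ has address $0\,\addr_{T_1}(i)$ and leaf $k + j$ has address $1\,\addr_{T_2}(j)$, the recursion defining $\varphi_w$ (from Definition~\ref{def:group-product-word}) collapses these prefixed compositions to $\varphi_{\addr_T(i)}$ and $\varphi_{\addr_T(k+j)}$, yielding exactly $\varphi_{\addr_T(1)}(a_1) + \dots + \varphi_{\addr_T(n)}(a_n)$.

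Part~\ref{prop:linear-gen:satisfy} would then follow quickly. The direction ``$\Leftarrow$'' is immediate: if $\varphi_{\addr_T(i)} = \varphi_{\addr_{T'}(i)}$ for all $i$, then $t^{\alg{A}}$ and $(t')^{\alg{A}}$ are literally the same function by part~\ref{prop:linear-gen:term-operation}. For ``$\Rightarrow$'', assuming $\alg{A} \satisfies t \approx t'$, I would fix $j \in \nset{n}$, evaluate both term operations at the tuple with $a_j = a$ arbitrary and $a_i = 0$ for $i \neq j$, and use that each $\varphi_i$ (and hence each $\varphi_w$) fixes the neutral element, so that part~\ref{prop:linear-gen:term-operation} reduces both sides to $\varphi_{\addr_T(j)}(a) = \varphi_{\addr_{T'}(j)}(a)$; letting $a$ and $j$ vary gives the claim.

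I do not expect a genuine obstacle here; this is a routine induction. The points that need a little attention are: (i) relabelling the variables of the right subterm so that the induction hypothesis is applicable; (ii) the fact that $(A,+)$ is allowed to be nonabelian, so the order of summands matters --- but this order is inherited correctly from the left\hyp{}to\hyp{}right ordering of the leaves, since all leaves of $T_1$ precede all leaves of $T_2$; and (iii) using twice the defining properties of automorphisms, namely that they distribute over $+$ (for part~\ref{prop:linear-gen:term-operation}) and that they fix the neutral element (for the substitution argument in part~\ref{prop:linear-gen:satisfy}).
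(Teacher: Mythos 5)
Your proposal is correct and follows essentially the same route as the paper's own proof: induction on the size of the bracketing using the decomposition $t = (t_1 t_2)$ and the automorphism property of $\varphi_0, \varphi_1$ for part (i), and for part (ii) the substitution of the neutral element into all variables but one, exactly as in the paper (your explicit relabelling of the variables in the right subterm is only a bookkeeping refinement of what the paper does implicitly). No gaps; the attention you pay to the possibly nonabelian ordering of summands is also consistent with the paper's argument.
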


\begin{proof}
\ref{prop:linear-gen:term-operation}
We proceed by induction on $n$.
The claim holds for $n = 1$ because in this case we have $t = x_1$ and $t^\alg{A}(a_1) = \id_A(a_1) = \varphi_\varepsilon(a_1) = \varphi_{\addr_T(1)}(a_1)$.

Assume that the claim holds for $n \leq k$ for some $k \geq 1$, and let $t \in B_{k+1}$.
Then $t = (t_1 \cdot t_2)$ for some subterms $t_1$ and $t_2$, say, with $\var(t_1) = \{x_1, \dots, x_\ell\}$ and $\var(t_2) = \{x_{\ell + 1}, \dots, x_{k+1}\}$.
By the induction hypothesis, we have
\begin{align*}
t_1^\alg{A}(\mathbf{a}) &= \varphi_{\addr_{T_1}(1)}(a_1) + \varphi_{\addr_{T_1}(2)}(a_2) + \dots + \varphi_{\addr_{T_1}(\ell)}(a_\ell), \\
t_2^\alg{A}(\mathbf{a}) &= \varphi_{\addr_{T_2}(\ell+1)}(a_{\ell+1}) + \varphi_{\addr_{T_2}(\ell+2)}(a_{\ell+2}) + \dots + \varphi_{\addr_{T_2}(k+1)}(a_{k+1}).
\end{align*}
Using the fact that $\varphi_0$ and $\varphi_1$ are automorphisms of $(A, {+})$, it follows that
\begin{align*}
t^\alg{A}(\mathbf{a})
&= \varphi_0(t_1^\alg{A}(\mathbf{a})) + \varphi_1(t_2^\alg{A}(\mathbf{a}))
\\ & = \varphi_0 \varphi_{\addr_{T_1}(1)}(a_1) + \dots + \varphi_0 \varphi_{\addr_{T_1}(\ell)}(a_\ell) + {}
\\ & \phantom{{} = {}} \varphi_1 \varphi_{\addr_{T_2}(\ell+1)}(a_{\ell+1}) + \dots + \varphi_1 \varphi_{\addr_{T_2}(k+1)}(a_{k+1})
\\ & = \varphi_{\addr_T(1)}(a_1) + \dots + \varphi_{\addr_T(k+1)}(a_{k+1}).
\end{align*}

\ref{prop:linear-gen:satisfy}
Assume first that $\alg{A}$ satisfies $t \approx t'$.
By applying part~\ref{prop:linear-gen:term-operation}, by assigning the neutral element $0$ of $(A, {+})$ to all variables but $x_i$, and by observing that any automorphism of $(A, {+})$ maps $0$ to itself, we get
\begin{align*}
t^{\alg{A}}(0, \dots, 0, a_i, 0, \dots, 0) &= \varphi_{\addr_T(i)}(a_i), \\
t'^{\alg{A}}(0, \dots, 0, a_i, 0, \dots, 0) &= \varphi_{\addr_{T'}(i)}(a_i),
\end{align*}
which implies $\varphi_{\addr_T(i)} = \varphi_{\addr_{T'}(i)}$ for all $i \in \nset{n}$.

Assume now that $\varphi_{\addr_T(i)} = \varphi_{\addr_{T'}(i)}$ for all $i \in \nset{n}$.
Then we have, by part~\ref{prop:linear-gen:term-operation}, that
\begin{align*}
&
t^\alg{A}(a_1, \dots, a_n)
= \varphi_{\addr_T(1)}(a_1) + \dots + \varphi_{\addr_T(n)}(a_n)
\\ &
= \varphi_{\addr_{T'}(1)}(a_1) + \dots + \varphi_{\addr_{T'}(n)}(a_n)
= t'^{\alg{A}}(a_1, \dots, a_n),
\end{align*}
that is, $t^\alg{A} = t'^{\alg{A}}$, so $\alg{A}$ satisfies the identity $t \approx t'$.
\end{proof}

\begin{remark}
\label{remark:linear-gen}
Item \ref{prop:linear-gen:satisfy} of Proposition~\ref{prop:linear-gen} can be reformulated in terms of Definition~\ref{def:group-product-word} as follows: $\alg{A}$ satisfies $t \approx t'$ if and only if $T \eqabG{\varphi_0}{\varphi_1}{\Aut(A, {+})} T'$.
Thus the fine associative spectrum of a linear quasigroup is always of the form $\eqabG{a}{b}{\alg{G}}$ for a suitable group $\alg{G}$ and elements $a,b \in G$.
\end{remark}

\begin{example}
\label{ex:rings}
Let $\alg{R}$ be a unital ring, and let $\alg{R}^\ast$ denote the multiplicative group of units of $\alg{R}$.
For any $a \in \alg{R}^\ast$, the map $x \mapsto ax$ is an automorphism of the additive group of $\alg{R}$, thus $x \circ y = ax+by$ is a linear quasigroup operation over $(R, {+})$ for all $a,b \in \alg{R}^\ast$.
In this case item \ref{prop:linear-gen:satisfy} of Proposition~\ref{prop:linear-gen} reads as follows: $(R, {\circ})$ satisfies $t \approx t'$ if and only if $T \eqabG{a}{b}{\alg{R}^\ast} T'$.
If $\alg{R}$ is commutative, then this can be written as $a^{\ld_T(i)} \cdot b^{\rd_T(i)} = a^{\ld_{T'}(i)} \cdot b^{\rd_{T'}(i)}$ for all $i \in \nset{n}$.

In this context, Proposition~\ref{prop:random walk} means that the operation $px+(1-p)y$ on real numbers is antiassociative.
The special case $p=1/2$ gives that the arithmetic mean as a binary operation is antiassociative, which was already noted in \cite{CsaWal-2000}.
These observations can be generalized as follows: the operation $ax+by$ is antiassociative whenever $a$ and $b$ are nonzero complex numbers and at least one of them is not a root of unity (see Example~\ref{example:antiassociative}).
This follows from Belousov's results on balanced identites \cite{Belousov-1966}, and the case $a=1$ was also considered by Lord \cite{Lord-1987}. 
\end{example}

\begin{remark}
In the proof of Proposition~\ref{prop:linear-gen} we did not use additive inverses; hence it holds even if $(A,{+})$ is just a monoid, and Example~\ref{ex:rings} is valid for semirings.
As an example, let us consider the tropical semiring $(\RR\cup\{\infty\},{\oplus},{\odot})$, where $x \oplus y = \min(x,y)$ and $x \odot y = x+y$.
In this case, our ``linear operation" takes the form $x \circ y = \min(a+x,b+y)$ with $a,b \in \RR$, and the bracketing identity $t \approx t'$ is satisfied if and only if $a\ld_T(i) + b\rd_T(i) = a\ld_{T'}(i) + b\rd_{T'}(i)$ for all $i \in \nset{n}$.
We will prove in Lemma~\ref{lem:treealisable-1} that if at least one of $a$ and $b$ is not zero, then this holds only if $T=T'$, i.e., the operation $\min(a+x,b+y)$ is antiassociative.
\end{remark}

\subsection{Special cases of linear quasigroups}

We apply Proposition~\ref{prop:linear-gen} to linear quasigroups with special assumptions on the automorphisms $\varphi_0,\varphi_1$.
The arising fine associative spectra correspond to the previously introduced equivalence relations on binary trees defined by linear congruences for left and right depths.

\begin{proposition}
\label{prop:linear}
Let $\alg{A} = (A, {\circ})$ be a linear quasigroup over a group $(A, {+})$ with arithmetic form $(A, {+}, \varphi_0, \varphi_1, 0)$.
Let $t, t' \in B_n$, and let $T, T' \in \mathcal{T}_n$ be the corresponding binary trees.
\begin{enumerate}[label={\upshape (\roman*)}]
\item\label{prop:linear:phi0=phi1}
If $\varphi_0 = \varphi_1$ and $\varphi_0$ has order $k$, then $\alg{A}$ satisfies $t \approx t'$ if and only if $T \eqD{k} T'$.
Consequently, $s_n(\alg{A}) = \NeqD{k}{n}$.

\item\label{prop:linear:phi1=id}
If $\varphi_1 = \id_A$ and $\varphi_0$ has order $k$, then $\alg{A}$ satisfies $t \approx t'$ if and only if $T \eqL{k} T'$.
Consequently, $s_n(\alg{A}) = \NeqL{k}{n} = C_{k,n-1}$.

\item\label{prop:linear:phi0=id}
If $\varphi_0 = \id_A$ and $\varphi_1$ has order $k$, then $\alg{A}$ satisfies $t \approx t'$ if and only if $T \eqR{k} T'$.
Consequently, $s_n(\alg{A}) = \NeqR{k}{n} = C_{k,n-1}$.

\end{enumerate}
\end{proposition}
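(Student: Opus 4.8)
The plan is to deduce all three statements directly from Proposition~\ref{prop:linear-gen}\ref{prop:linear-gen:satisfy}, which already reduces the question of whether $\alg{A} \satisfies t \approx t'$ to the purely combinatorial condition $\varphi_{\addr_T(i)} = \varphi_{\addr_{T'}(i)}$ for all $i \in \nset{n}$. The remaining work is to simplify the composite automorphism $\varphi_w$ under each of the three hypotheses on $\varphi_0, \varphi_1$, and to translate the resulting condition on addresses into one on (total, left, right) depths.

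For part~\ref{prop:linear:phi0=phi1}, set $\psi := \varphi_0 = \varphi_1$. Since $\varphi_w$ is the product over the letters of $w$ of the corresponding factors, and all these factors equal $\psi$, we get $\varphi_w = \psi^{|w|}$, where $|w|$ denotes the length of $w$. As $|\addr_T(i)|$ is exactly the total depth $d_T(i)$, the satisfaction condition becomes $\psi^{d_T(i)} = \psi^{d_{T'}(i)}$ for all $i$; because $\psi$ has order $k$, this holds if and only if $d_T(i) \equiv d_{T'}(i) \pmod{k}$ for all $i$, i.e., if and only if $T \eqD{k} T'$. Parts~\ref{prop:linear:phi1=id} and \ref{prop:linear:phi0=id} are handled the same way: if $\varphi_1 = \id_A$ and $\varphi_0 = \psi$ has order $k$, then in the product defining $\varphi_w$ only the factors coming from the $0$'s of $w$ contribute a nontrivial term, so $\varphi_w = \psi^{c}$ where $c$ is the number of $0$'s in $w$; since this count for $w = \addr_T(i)$ equals $\ld_T(i)$ by the definition of left depth, the condition reads $\ld_T(i) \equiv \ld_{T'}(i) \pmod{k}$ for all $i$, which is exactly $T \eqL{k} T'$. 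The case $\varphi_0 = \id_A$ is symmetric, counting $1$'s (right depth) instead of $0$'s and yielding $T \eqR{k} T'$.

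The ``consequently'' clauses are then immediate: by definition $s_n(\alg{A}) = \card{B_n/\sigma_n(\alg{A})}$, where $(t,t') \in \sigma_n(\alg{A})$ iff $\alg{A} \satisfies t \approx t'$, and the canonical bijection $\tau\colon B_n \to \mathcal{T}_n$ carries $\sigma_n(\alg{A})$ onto $\eqD{k}$ (respectively $\eqL{k}$, $\eqR{k}$) by what was just proved; hence $s_n(\alg{A}) = \NeqD{k}{n}$ (respectively $\NeqL{k}{n}$, $\NeqR{k}{n}$). The equalities $\NeqL{k}{n} = \NeqR{k}{n} = C_{k,n-1}$ with the $k$-modular Catalan numbers were recorded in Subsection~\ref{subsec:eqrel}.

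There is essentially no serious obstacle here; the argument is a short unwinding of Proposition~\ref{prop:linear-gen}. The only points requiring a moment of care are the elementary fact that $\psi^m = \psi^{m'}$ iff $m \equiv m' \pmod{k}$ when $\psi$ has order $k$ (so that finiteness of the order is precisely what makes the modulus $k$ appear rather than plain equality of depth sequences), and the bookkeeping identifying the numbers of $0$'s, $1$'s, and total letters of $\addr_T(i)$ with $\ld_T(i)$, $\rd_T(i)$, and $d_T(i)$, all of which is built into the definitions of Subsection~\ref{subs:binary-trees}. Alternatively, one could cite Example~\ref{ex:group equivalence} together with Remark~\ref{remark:linear-gen}, observing that when $\varphi_0 = \varphi_1 = \psi$ (resp.\ $\varphi_1 = \id_A$, $\varphi_0 = \id_A$) the pair $(\varphi_0,\varphi_1)$ generates a cyclic group isomorphic to $\ZZ_k$ in which these relations are $\eqabG{1}{1}{\ZZ_k}$, $\eqabG{1}{0}{\ZZ_k}$, $\eqabG{0}{1}{\ZZ_k}$; but the direct computation above is just as short and more self-contained.
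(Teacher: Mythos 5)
Your proof is correct and follows essentially the same route as the paper's: both reduce the problem via Proposition~\ref{prop:linear-gen}\ref{prop:linear-gen:satisfy} to the condition $\varphi_{\addr_T(i)} = \varphi_{\addr_{T'}(i)}$, observe that under each hypothesis $\varphi_{\addr_T(i)}$ is a power of a single order-$k$ automorphism with exponent $d_T(i)$, $\ld_T(i)$, or $\rd_T(i)$, and conclude that equality of these powers is equivalent to congruence of the exponents modulo $k$, with the ``consequently'' clauses following from the definitions of $s_n(\alg{A})$ and the class counts.
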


\begin{proof}
\ref{prop:linear:phi0=phi1}
Since $\varphi_0 = \varphi_1$, we have $\varphi_{\addr_T(i)} = \varphi_0^{d_T(i)}$.
Since $\varphi_0$ has order $k$, it follows that $\varphi_{\addr_T(i)} = \varphi_{\addr_{T'}(i)}$ if and only if $d_T(i) \equiv d_{T'}(i) \pmod{k}$.
By Proposition~\ref{prop:linear-gen}, $\alg{A}$ satisfies $t \approx t'$ if and only if $T \eqD{k} T'$.
The last claim is clear because $\NeqD{k}{n} = \card{B_n / {\eqD{k}}}$.

\ref{prop:linear:phi1=id}
Since $\varphi_1 = \id_A$, we have $\varphi_{\addr_T(i)} = \varphi_0^{\ld_T(i)}$.
Since $\varphi_0$ has order $k$, it follows that $\varphi_{\addr_T(i)} = \varphi_{\addr_{T'}(i)}$ if and only if $\ld_T(i) \equiv \ld_{T'}(i) \pmod{k}$.
By Proposition~\ref{prop:linear-gen}, $\alg{A}$ satisfies $t \approx t'$ if and only if $T \eqL{k} T'$.
The last claim is clear because $C_{k,n-1} = \NeqL{k}{n} = \card{B_n / {\eqL{k}}}$.

\ref{prop:linear:phi0=id}
The proof is similar to part \ref{prop:linear:phi1=id}.
\end{proof}

\begin{proposition}
\label{prop:phi0-phi1-commute}
Let $\alg{A} = (A, {\circ})$ be a linear quasigroup over a group $(A, {+})$ with arithmetic form $(A, {+}, \varphi_0, \varphi_1, 0)$, and assume that $\varphi_0$ and $\varphi_1$ have orders $k$ and $\ell$, respectively, and $\varphi_0 \varphi_1 = \varphi_1 \varphi_0$.
Let $t, t' \in B_n$, and let $T, T' \in \mathcal{T}_n$ be the corresponding binary trees.
\begin{enumerate}[label={\upshape (\roman*)}]
\item\label{prop:phi0-phi1-commute:ineq}
If $T \eqLR{k}{\ell} T'$, then $\alg{A}$ satisfies $t \approx t'$.
Consequently, $\sigma_n(\alg{A})$ is a coarsening of $\eqLR{k}{\ell}$ and hence $s_n(\alg{A}) \leq \NeqLR{k}{\ell}{n}$.
\item\label{prop:phi0-phi1-commute:pqrs}
If for all $p, q, r, s \in \IN$, $\varphi_0^p \varphi_1^q = \varphi_0^r \varphi_1^s$ implies $p \equiv r \pmod{k}$ and $q \equiv s \pmod{\ell}$, then $\alg{A}$ satisfies $t \approx t'$ if and only if $T \eqLR{k}{\ell}T'$.
Consequently, $s_n(\alg{A}) = \NeqLR{k}{\ell}{n}$.
\end{enumerate}
\end{proposition}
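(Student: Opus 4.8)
The plan is to reduce both parts to Proposition~\ref{prop:linear-gen}\ref{prop:linear-gen:satisfy}, using only the observation that, because $\varphi_0$ and $\varphi_1$ commute, the permutation $\varphi_{\addr_T(i)}$ depends on the leaf $i$ only through its left and right depths. Concretely, I would first record that for every binary tree $T \in \mathcal{T}_n$ and every leaf $i \in \nset{n}$,
\[
\varphi_{\addr_T(i)} = \varphi_0^{\ld_T(i)} \varphi_1^{\rd_T(i)} .
\]
This is immediate from the recursive definition of $\varphi_w$: expanding along the word $\addr_T(i) \in \{0,1\}^*$, the permutation $\varphi_{\addr_T(i)}$ is a product in which $\varphi_0$ appears once for each $0$ and $\varphi_1$ once for each $1$ occurring in $\addr_T(i)$, that is, $\ld_T(i)$ and $\rd_T(i)$ times, respectively; the hypothesis $\varphi_0 \varphi_1 = \varphi_1 \varphi_0$ then lets these factors be collected.

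For part~\ref{prop:phi0-phi1-commute:ineq}, suppose $T \eqLR{k}{\ell} T'$, i.e., $\ld_T(i) \equiv \ld_{T'}(i) \pmod{k}$ and $\rd_T(i) \equiv \rd_{T'}(i) \pmod{\ell}$ for all $i \in \nset{n}$. Since $\varphi_0$ and $\varphi_1$ have orders $k$ and $\ell$, these congruences give $\varphi_0^{\ld_T(i)} = \varphi_0^{\ld_{T'}(i)}$ and $\varphi_1^{\rd_T(i)} = \varphi_1^{\rd_{T'}(i)}$, hence $\varphi_{\addr_T(i)} = \varphi_{\addr_{T'}(i)}$ by the displayed identity, and Proposition~\ref{prop:linear-gen}\ref{prop:linear-gen:satisfy} yields $\alg{A} \satisfies t \approx t'$. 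The remaining assertions then follow formally: every $\eqLR{k}{\ell}$-equivalence class is contained in a single $\sigma_n(\alg{A})$-class, so $\sigma_n(\alg{A})$ is a coarsening of $\eqLR{k}{\ell}$, and a coarser equivalence relation has no more classes, giving $s_n(\alg{A}) \leq \NeqLR{k}{\ell}{n}$.

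For part~\ref{prop:phi0-phi1-commute:pqrs}, the ``if'' direction is exactly part~\ref{prop:phi0-phi1-commute:ineq}. For the converse, assume $\alg{A} \satisfies t \approx t'$; then Proposition~\ref{prop:linear-gen}\ref{prop:linear-gen:satisfy} together with the displayed identity gives $\varphi_0^{\ld_T(i)} \varphi_1^{\rd_T(i)} = \varphi_0^{\ld_{T'}(i)} \varphi_1^{\rd_{T'}(i)}$ for every $i \in \nset{n}$. Invoking the hypothesis with $p = \ld_T(i)$, $q = \rd_T(i)$, $r = \ld_{T'}(i)$, $s = \rd_{T'}(i)$ yields $\ld_T(i) \equiv \ld_{T'}(i) \pmod{k}$ and $\rd_T(i) \equiv \rd_{T'}(i) \pmod{\ell}$, and since this holds for all $i$ we conclude $T \eqLR{k}{\ell} T'$. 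Thus $\sigma_n(\alg{A})$ and $\eqLR{k}{\ell}$ agree on $\mathcal{T}_n$, whence $s_n(\alg{A}) = \NeqLR{k}{\ell}{n}$. The whole argument is essentially bookkeeping and I do not expect a genuine obstacle; the only point I would double-check is the quantifier structure in the hypothesis of part~\ref{prop:phi0-phi1-commute:pqrs} — it suffices to apply the stated implication to the specific exponents arising as left and right depths of leaves, so nothing stronger than the assumed condition is used.
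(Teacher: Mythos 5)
Your proposal is correct and follows essentially the same route as the paper: it collects $\varphi_{\addr_T(i)}$ into $\varphi_0^{\ld_T(i)}\varphi_1^{\rd_T(i)}$ using commutativity, applies Proposition~\ref{prop:linear-gen}\ref{prop:linear-gen:satisfy} in both directions, and invokes the hypothesis of part~\ref{prop:phi0-phi1-commute:pqrs} with the depths as exponents, exactly as in the paper's proof. Your explicit remark on the quantifier structure is fine and raises no issue.
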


\begin{proof}
\ref{prop:phi0-phi1-commute:ineq}
Assume $T \eqLR{k}{\ell} T'$.
Then $\ld_T(i) \equiv \ld_{T'}(i) \pmod{k}$ and $\rd_T(i) \equiv \rd_{T'}(i) \pmod{\ell}$ for all $i \in \nset{n}$.
Since $\varphi_0 \varphi_1 = \varphi_1 \varphi_0$ and $\varphi_0$ and $\varphi_1$ have orders $k$ and $\ell$, respectively, we have
$\varphi_{\addr_T(i)} = \varphi_0^{\ld_T(i)} \varphi_1^{\rd_T(i)} = \varphi_0^{\ld_{T'}(i)} \varphi_1^{\rd_{T'}(i)} = \varphi_{\addr_{T'}(i)}$ for all $i \in \nset{n}$.
By Proposition~\ref{prop:linear-gen}, $\alg{A}$ satisfies $t \approx t'$.

\ref{prop:phi0-phi1-commute:pqrs}
By part \ref{prop:phi0-phi1-commute:ineq}, it suffices to show that $\alg{A} \satisfies t \approx t'$ implies $T \eqLR{k}{\ell} T'$.
So, assume that $\alg{A} \satisfies t \approx t'$.
Then for all $i \in \nset{n}$, $\varphi_{\addr_T(i)} = \varphi_{\addr_{T'}(i)}$, i.e., $\varphi_0^{\ld_T(i)} \varphi_1^{\rd_T(i)} = \varphi_0^{\ld_{T'}(i)} \varphi_1^{\rd_{T'}(i)}$ because $\varphi_0 \varphi_1 = \varphi_1 \varphi_0$.
By our hypothesis, this implies that for all $i \in \nset{n}$, $\ld_T(i) \equiv \ld_{T'}(i) \pmod{k}$ and $\rd_T(i) \equiv \rd_{T'}(i) \pmod{\ell}$,
in other words, $T \eqLR{k}{\ell} T'$.
\end{proof}

\begin{proposition}
\label{prop:phi0-phi1-powers-of-pi}
Let $\alg{A} = (A, {\circ})$ be a linear quasigroup over a group $(A, {+})$ with arithmetic form $(A, {+}, \varphi_0, \varphi_1, 0)$, and assume that $\varphi_0 = \pi^a$ and $\varphi_1 = \pi^b$ for some
permutation $\pi$ of $A$
and
$a, b \in \IN$. Assume that $\pi$ has order $m$.
Let $t, t' \in B_n$, and let $T, T' \in \mathcal{T}_n$ be the corresponding binary trees.
Then $\mathbf{A} \satisfies t \approx t'$ if and only if $T \eqabm{a}{b}{m} T'$.
Consequently, $s_n(\alg{A}) = \Neqabm{a}{b}{m}{n}$.
\end{proposition}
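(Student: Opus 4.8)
The plan is to read the statement off directly from Proposition~\ref{prop:linear-gen}\ref{prop:linear-gen:satisfy}, which already reduces ``$\alg{A} \satisfies t \approx t'$'' to the condition $\varphi_{\addr_T(i)} = \varphi_{\addr_{T'}(i)}$ for all $i \in \nset{n}$. So the only real work is to evaluate the permutation $\varphi_w$ explicitly when $\varphi_0 = \pi^a$ and $\varphi_1 = \pi^b$, and then to translate equality of two such permutations into a congruence modulo $m$.

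First I would show, by a one-line induction on the length of a word $w \in \{0,1\}^*$, that $\varphi_w = \pi^{a\, n_0(w) + b\, n_1(w)}$, where $n_0(w)$ and $n_1(w)$ denote the numbers of $0$'s and $1$'s in $w$. Indeed, $\varphi_\varepsilon = \id_A = \pi^0$, and if $w = 0w'$ then $\varphi_w = \varphi_0 \varphi_{w'} = \pi^a \pi^{a\, n_0(w') + b\, n_1(w')} = \pi^{a\, n_0(w) + b\, n_1(w)}$, and symmetrically for $w = 1w'$; this uses that all the factors are powers of the single permutation $\pi$, so they commute and collapse into a single power of $\pi$ whose exponent is the sum of the individual exponents. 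Applying this with $w = \addr_T(i)$, and recalling that $\addr_T(i)$ contains exactly $\ld_T(i)$ zeros and $\rd_T(i)$ ones, gives $\varphi_{\addr_T(i)} = \pi^{a \ld_T(i) + b \rd_T(i)}$.

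Next, since $\pi$ has order $m$, the cyclic group $\gen{\pi}$ has $m$ elements, so for integers $s, s'$ one has $\pi^s = \pi^{s'}$ if and only if $s \equiv s' \pmod m$. Combining this with the previous step, $\varphi_{\addr_T(i)} = \varphi_{\addr_{T'}(i)}$ holds precisely when $a \ld_T(i) + b \rd_T(i) \equiv a \ld_{T'}(i) + b \rd_{T'}(i) \pmod m$. Quantifying over all $i \in \nset{n}$ and comparing with the definition of $\eqabm{a}{b}{m}$ yields $\alg{A} \satisfies t \approx t'$ if and only if $T \eqabm{a}{b}{m} T'$, which is the first assertion. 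For the numerical consequence, $s_n(\alg{A}) = \card{B_n / \sigma_n(\alg{A})}$, and the bijection $\tau \colon B_n \to \mathcal{T}_n$ carries $\sigma_n(\alg{A})$ onto $\eqabm{a}{b}{m}$, so $s_n(\alg{A}) = \card{\mathcal{T}_n / {\eqabm{a}{b}{m}}} = \Neqabm{a}{b}{m}{n}$.

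I do not expect a genuine obstacle: the argument is essentially a substitution into Proposition~\ref{prop:linear-gen}. The two points worth stating carefully are (a) that $\varphi_0$ and $\varphi_1$ commute precisely because they are powers of one common permutation, which is what allows all factors of $\varphi_{\addr_T(i)}$ to be gathered into a single exponent that is a linear combination of the left and right depths; and (b) that the hypothesis ``$\pi$ has order $m$'' is exactly what turns equality of powers of $\pi$ into a congruence modulo $m$ on exponents — had $\pi$ been of infinite order, the relation would degenerate to equality of trees, in the spirit of Proposition~\ref{prop:random walk}.
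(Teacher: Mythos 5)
Your proposal is correct and follows essentially the same route as the paper: it applies Proposition~\ref{prop:linear-gen}\ref{prop:linear-gen:satisfy}, collapses $\varphi_{\addr_T(i)}$ into the single power $\pi^{a\ld_T(i)+b\rd_T(i)}$ using that all factors are powers of the one permutation $\pi$, and then uses the order $m$ of $\pi$ to turn equality of powers into the congruence defining $\eqabm{a}{b}{m}$. The only difference is that you spell out the commutation/collapsing step by an explicit induction on the address word, which the paper leaves implicit.
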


\begin{proof}
We have $\varphi_{\addr_T(i)} = (\pi^a)^{\ld_T(i)} (\pi^b)^{\rd_T(i)} = \pi^{a \ld_T(i) + b \rd_T(i)}$ and, similarly, \linebreak $\varphi_{\addr_{T'}(i)} = \pi^{a \ld_{T'}(i) + b \rd_{T'}(i)}$.
Since $\pi$ has order $m$, it follows that $\varphi_{\addr_T(i)} = \varphi_{\addr_{T'}(i)}$ if and only if $a \ld_T(i) + b \rd_T(i) \equiv a \ld_{T'}(i) + b \rd_{T'}(i) \pmod{m}$.
The claim then follows from Proposition~\ref{prop:linear-gen}.
\end{proof}

\begin{remark}
The condition in the proof of Proposition~\ref{prop:phi0-phi1-powers-of-pi} is equivalent to the condition that
$(x,y) = (\ld_T(i) - \ld_{T'}(i), \rd_T(i) - \rd_{T'}(i))$ is a solution of the congruence $a x + b y \equiv 0 \pmod{m}$.
It is well known that such a congruence has $\gamma m$ solutions, where $\gamma := \gcd(a, b, m)$.
A method for determining the solutions was described by Lehmer~\cite[p.~155]{Lehmer-1913}.
\end{remark}

\begin{example}
\label{example:roots of unity}
In order to illustrate the above results,
let $\omega$ and $\zeta$ be primitive $k$-th and $\ell$-th roots of unity in $\CC$, respectively, say $\omega = e^{2 \pi i / k}$ and $\zeta = e^{2 \pi i / \ell}$.
Then $x \mapsto \omega x$ and $x \mapsto \zeta x$ are automorphisms of $(\CC,{+})$ of order $k$ and $\ell$, respectively, and they commute.
Let $t, t' \in B_n$, and let $T$ and $T'$ be the corresponding binary trees.
Using Propositions~\ref{prop:linear}, \ref{prop:phi0-phi1-commute} and \ref{prop:phi0-phi1-powers-of-pi}, we conclude the following:
\begin{enumerate}[label={(\alph*)}]
\item
The groupoid $\alg{A}_1 = (\CC, {\circ})$ with $x \circ y = \omega x + \omega y$ satisfies $t \approx t'$ if and only if $T \eqD{k} T'$, and its associative spectrum is $s_n(\alg{A}_1) = \NeqD{k}{n}$.

\item
The groupoid $\alg{A}_2 = (\CC, {\circ})$ with $x \circ y = \omega x + y$ satisfies $t \approx t'$ if and only if $T \eqL{k} T'$, and its associative spectrum is $s_n(\alg{A}_2) = \NeqL{k}{n} = C_{k,n-1}$.

\item
The groupoid $\alg{A}_3 = (\CC, {\circ})$ with $x \circ y = x + \zeta y$ satisfies $t \approx t'$ if and only if $T \eqR{\ell} T'$, and its associative spectrum is $s_n(\alg{A}_3) = \NeqR{\ell}{n} = C_{\ell,n-1}$.

\item
The groupoid $\alg{A}_4 = (\CC, {\circ})$ with $x \circ y = \omega^a x + \omega^b y$, where $a, b \in \IN$, satisfies $t \approx t'$ if and only if $T \eqabm{a}{b}{k} T'$, and its associative spectrum is $s_n(\alg{A}_4) = \Neqabm{a}{b}{k}{n}$.

\item
The groupoid $\alg{A}_5 := \alg{A}_2 \times \alg{A}_3$ satisfies $t \approx t'$ if and only if $T \eqLR{k}{\ell} T'$, and its associative spectrum is $s_n(\alg{A}_5) = \NeqLR{k}{\ell}{n}$.

\item
The groupoid $\alg{A}_6 := \alg{A}_3 \times \alg{A}_4$ satisfies $t \approx t'$ if and only if $T \sim T'$, where $\mathord{\sim} = \mathord{\eqR{\ell} \cap \eqabm{a}{b}{k}}$.
\end{enumerate}
\end{example}

\section{Grids, groups and trees}
\label{sect:grids}

\subsection{Parallelogram grids}
\label{subsec:grids}

If $(u,v)$ and $(w,z)$ are two linearly independent vectors in $\RR \times \RR$, then the set of their integral linear combinations $\Lambda := \ZZ(u,v) + \ZZ(w,z) = \{ x(u,v) + y(w,z) : x,y \in \ZZ\}$ is a subgroup of $\RR \times \RR$, which can be regarded geometrically as a parallelogram grid.
Let us note that such subgroups are commonly called lattices, but the word ``lattice'' is also used to refer to partially ordered sets with infima and suprema.
To avoid ambiguity, we use the term ``lattice'' only in this latter context (such as the lattice of subgroups of a given group).

We will work with subgroups of $\ZZ\times\ZZ$, so let us give a complete overview of these subgroups here.
We have of course the trivial ``zero-dimensional" subgroup $\{(0,0)\}$, and ``one-dimensional" subgroups $\ZZ(u,v) = \{ x(u,v) : x \in \ZZ\}$ spanned by nonzero vectors $(u,v) \in \ZZ\times\ZZ \setminus \{(0,0)\}$.
The remaining subgroups are parallelogram grids in the sense explained above. 
It is not hard to verify that if $\Lambda$ is such a ``two-dimensional" subgroup, then we have $\Lambda = \ZZ(u,v) + \ZZ(w,0)$ for suitable integers $u,v,w$.
We can assume that $0 \leq u < w$ and $v>0$, and under this assumption the numbers $u,v,w$ are uniquely determined by the parallelogram grid $\Lambda$. 
It will be useful to state explicitly the condition for a pair of integers to belong to $\Lambda$:
\begin{equation}
\label{eq:grid condition}
(r,s) \in \ZZ(u,v) + \ZZ(w,0) \iff v \divides s \text{ and } vw \divides vr-us.
\end{equation}

We will denote the set of all subgroups of $\ZZ\times\ZZ$ by $\Sub(\ZZ\times\ZZ)$.
This is a lattice with least element $\{(0,0)\}$, greatest element $\ZZ\times\ZZ$, and the meet and join operations are given by $\Lambda_1 \cap \Lambda_2$ and $\Lambda_1 + \Lambda_2$, respectively.
Let $\Sub_2(\ZZ\times\ZZ)$ denote the set of two-dimensional subgroups of $\ZZ\times\ZZ$.

\begin{remark}
\label{remark:Sub2}
Clearly, $\Sub_2(\ZZ\times\ZZ)$ is closed under joins, and we will verify that it is also closed under meets.
If $\Lambda = \ZZ(u,v) + \ZZ(w,0) \in \Sub_2(\ZZ\times\ZZ)$, then $w(u,v)-u(w,0)=(0,vw) \in \Lambda$, i.e., $\Lambda$ contains a nonzero point on the $y$ axis.
(Actually, $(0,vw/\gcd(u,w))$ is the point of $\Lambda$ on the $y$ axis that lies immediately above the origin.)
Thus we see that $\Lambda$ contains a rectangular grid $\ZZ(k,0) + \ZZ(0,\ell)$, say, with $k=w$ and $\ell=vw$.
Now, if $\Lambda'$ is another two-dimensional subgroup, then $\Lambda' \supseteq \ZZ(k',0) + \ZZ(0,\ell')$ for suitable nonzero (in fact, positive) integers $k'$ and $\ell'$.
Therefore, $\Lambda \cap \Lambda' \supseteq 
\ZZ(\lcm(k,k'),0) + \ZZ(0,\lcm(\ell,\ell')) \in \Sub_2(\ZZ\times\ZZ)$, hence $\Lambda \cap \Lambda'$ is indeed a two-dimensional subgroup.
Thus we see that $\Sub_2(\ZZ\times\ZZ)$ is a sublattice of $\Sub(\ZZ\times\ZZ)$.
Let us note that $\Sub(\ZZ\times\ZZ) \setminus \Sub_2(\ZZ\times\ZZ)$ is not a sublattice: it is obviously closed under meets, but not closed under joins.
\end{remark}

\begin{example}
\label{ex:par-grid-1}
The parallelogram grid $\Lambda = \ZZ(u,v) + \ZZ(w,0)$ with $u = 6$, $v = 3$, $w = 10$ is shown in Figure~\ref{fig:grid}.
By \eqref{eq:grid condition}, the points in $\Lambda$ are the pairs $(r,s)$ satisfying $3 \divides s$ and $30 \divides 3r - 6s$, or, equivalently, $s \equiv 0~\pmod{3}$ and $r \equiv 2s~\pmod{10}$.
The $y$ coordinate of the point on the $y$ axis immediately above the origin is $vw / \gcd(u,w) = 3 \cdot 10 / 2 = 15$.
The grid $\Lambda$ contains the rectangular grid $\ZZ(10,0) + \ZZ(0,15)$.
\end{example}

\begin{figure}
  \centering
  \begin{tikzpicture}
    \coordinate (Origin)   at (0,0);
    \coordinate (XAxisMin) at (-2,0);
    \coordinate (XAxisMax) at (8,0);
    \coordinate (YAxisMin) at (0,-1.25);
    \coordinate (YAxisMax) at (0,7.5);
    \draw [thin, gray,-latex] (XAxisMin) -- (XAxisMax);% Draw x axis
    \draw [thin, gray,-latex] (YAxisMin) -- (YAxisMax);% Draw y axis

    \clip (-2.5,-1.5) rectangle (8.5cm,8.5cm); % Clips the picture...
    \pgftransformcm{10/3}{0}{6/3}{3/3}{\pgfpoint{0cm}{0cm}}
          % This is actually the transformation matrix entries that gives the slanted unit vectors.
    \coordinate (Bone) at (1,0);
    \coordinate (Btwo) at (0,1);
    \draw[style=help lines,dashed] (-14,-14) grid[step=1cm] (14,14);
          % Draws a grid in the new coordinates.
    \foreach \x in {-10,...,10}{% Two indices running over each
      \foreach \y in {-10,...,10}{% node on the grid we have drawn 
        \node[draw,circle,inner sep=2pt,fill] at (\x,\y) {};
            % Places a dot at those points
      }
    }
    \draw [ultra thick,-latex,red] (Origin)
        -- (Bone) node [below right] {$(10,0)$};
    \draw [ultra thick,-latex,red] (Origin)
        -- (Btwo) node [above left] {$(6,3)$};
    \node[draw=none,fill=none,above left] at (-3,5) {$(0,15)$};
    \node[draw=none,fill=none,above left] at (-2,5) {$(10,15)$};
    \node[draw=none,fill=none,above] at (-1,2) {$(2,6)$};
  \end{tikzpicture}
  \caption{Parallelogram grid $\ZZ(6,3) + \ZZ(10,0)$.}
  \label{fig:grid}
\end{figure}

\subsection{Groups and grids}

In this subsection we prove that each ``modulo group" equivalence relation (see Definition~\ref{def:group-product-word}) can be described by a parallelogram grid.
(More precisely, we only prove here that they correspond to subgroups of $\ZZ\times\ZZ$, but later we will see that nontrivial ``modulo group" equivalences give rise to two-dimensional subgroups, i.e., parallelogram grids.)

\begin{definition}\label{def:Lambda_G}
For a group $\alg{G}=(G,{\cdot})$ and $a,b \in G$, let $\pairs{\alg{G}}{a}{b}$ denote the following set of pairs of integers:
\[
\pairs{\alg{G}}{a}{b} = \{ (r,s) \in \ZZ\times\ZZ : b a^r b^{-1} = a b^{-s} a^{-1} \}.
\]
\end{definition}

\begin{lemma}
For any group $\alg{G}$ and $a,b \in G$, the set $\pairs{\alg{G}}{a}{b}$ is a subgroup of $\ZZ\times\ZZ$. 
\end{lemma}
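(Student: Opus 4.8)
The plan is to realise $\pairs{\alg{G}}{a}{b}$ as the preimage of a subgroup under a group homomorphism, which makes the closure properties automatic. The starting observation is that for any fixed $g \in G$ the map $\ZZ \to G$, $r \mapsto g a^r g^{-1}$, is a homomorphism from $(\ZZ,{+})$ to $\alg{G}$, because $g a^{r+r'} g^{-1} = (g a^r g^{-1})(g a^{r'} g^{-1})$. Taking $g = b$ yields a homomorphism $f\colon \ZZ \to G$ with $f(r) = b a^r b^{-1}$, and taking $g = a$ together with the automorphism $s \mapsto -s$ of $\ZZ$ yields a homomorphism $h\colon \ZZ \to G$ with $h(s) = a b^{-s} a^{-1}$ (indeed $h(s+s') = a b^{-s-s'} a^{-1} = (a b^{-s} a^{-1})(a b^{-s'} a^{-1})$).

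First I would assemble $f$ and $h$ into a single coordinatewise homomorphism $F\colon \ZZ\times\ZZ \to \alg{G}\times\alg{G}$, $(r,s) \mapsto (f(r),h(s))$. Writing $\Delta := \{(x,x) : x \in G\}$ for the diagonal subgroup of $\alg{G}\times\alg{G}$, Definition~\ref{def:Lambda_G} states exactly that
\[
\pairs{\alg{G}}{a}{b} = \{(r,s) \in \ZZ\times\ZZ : f(r) = h(s)\} = F^{-1}(\Delta).
\]
Since the preimage of a subgroup under a group homomorphism is a subgroup, this finishes the proof.

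Alternatively, one may simply check the three subgroup conditions directly: $b a^0 b^{-1} = 1 = a b^0 a^{-1}$ gives $(0,0) \in \pairs{\alg{G}}{a}{b}$; inverting $b a^r b^{-1} = a b^{-s} a^{-1}$ gives $b a^{-r} b^{-1} = a b^{s} a^{-1}$, so $(-r,-s) \in \pairs{\alg{G}}{a}{b}$; and multiplying the identities $b a^{r_1} b^{-1} = a b^{-s_1} a^{-1}$ and $b a^{r_2} b^{-1} = a b^{-s_2} a^{-1}$ gives $b a^{r_1+r_2} b^{-1} = a b^{-(s_1+s_2)} a^{-1}$, so $(r_1+r_2,s_1+s_2) \in \pairs{\alg{G}}{a}{b}$. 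There is no real obstacle here; the one point to keep in mind is that $\alg{G}$ need not be abelian, so one should not try to package the two conjugation maps into a single map into $\alg{G}$ (e.g.\ $(r,s) \mapsto f(r)h(s)^{-1}$), which need not be a homomorphism — routing everything through $\alg{G}\times\alg{G}$ and its diagonal avoids this cleanly.
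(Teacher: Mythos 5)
Your proposal is correct; both of your routes work, and your subgroup-criterion checks are carried out without error. The paper's own proof is, in substance, your ``alternative'' direct verification, just packaged slightly differently: it rewrites the defining condition $b a^r b^{-1} = a b^{-s} a^{-1}$ as $\phi(a^r) = b^{-s}$, where $\phi$ is conjugation by $b^{-1}a$, and then uses the fact that $\phi$ is an automorphism to check closure under addition and negation. Your primary route is a mild structural upgrade of the same computation: you observe that $r \mapsto b a^r b^{-1}$ and $s \mapsto a b^{-s} a^{-1}$ are homomorphisms $\ZZ \to \alg{G}$, assemble them into $F\colon \ZZ\times\ZZ \to \alg{G}\times\alg{G}$, and identify $\pairs{\alg{G}}{a}{b}$ as $F^{-1}(\Delta)$ for the diagonal subgroup $\Delta$, so the closure properties come for free. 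What this buys is brevity and the elimination of the two explicit checks; what the paper's formulation buys is the explicit automorphism $\phi$, which is not needed later, so nothing is lost either way. Your cautionary remark is also well taken: $(r,s) \mapsto f(r)h(s)^{-1}$ need not be a homomorphism into a nonabelian $\alg{G}$, and routing through $\alg{G}\times\alg{G}$ and its diagonal is the correct way to avoid that trap.
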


\begin{proof}
The defining condition $b a^r b^{-1} = a b^{-s} a^{-1}$ of $\pairs{\alg{G}}{a}{b}$ is equivalent to $\phi(a^r)=b^{-s}$, where $\phi(x) = a^{-1}bxb^{-1}a$ is the conjugation by $b^{-1}a$.
If $(r,s), (r',s') \in \pairs{\alg{G}}{a}{b}$, then, using the fact that $\phi$ is an automorphism of $\alg{G}$, we have
\[
\phi(a^{r+r'})=\phi(a^ra^{r'})=\phi(a^r)\phi(a^{r'})=b^{-s}b^{-s'}=b^{-(s+s')},
\]
thus $(r+r',s+s') \in \pairs{\alg{G}}{a}{b}$.
Similarly, $(r,s) \in \pairs{\alg{G}}{a}{b}$ implies $(-r,-s) \in \pairs{\alg{G}}{a}{b}$:
\[
\phi(a^{-r})=\phi((a^r)^{-1})=\phi(a^r)^{-1}=(b^{-s})^{-1}=b^{-(-s)}.
\qedhere
\]
\end{proof}

\begin{proposition}
\label{prop:group vs lin}
Let $\alg{G}$ be a group, let $a,b \in G$, and let $T,T'$ be binary trees with leaves $1, 2, \dots, n$ \textup{(}in the left\hyp{}to\hyp{}right order\textup{)}.
Then $T\eqabG{a}{b}{\alg{G}}T'$ holds if and only if
$(\ld_T(i)-\ld_{T'}(i),\rd_T(i)-\rd_{T'}(i)) \in \pairs{\alg{G}}{a}{b}$ for all $i \in \nset{n}$.
\end{proposition}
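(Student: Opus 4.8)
The plan is to compare the group elements $\gamma_{\addr_T(i)}$ and $\gamma_{\addr_{T'}(i)}$ leaf by leaf, and to show that their equality is governed exactly by membership of the difference vector $(\ld_T(i)-\ld_{T'}(i),\rd_T(i)-\rd_{T'}(i))$ in $\pairs{\alg{G}}{a}{b}$. The subtlety is that $\alg{G}$ need not be abelian, so $\gamma_{\addr_T(i)}$ is \emph{not} simply $a^{\ld_T(i)}b^{\rd_T(i)}$; the address word $\addr_T(i)$ records the actual interleaving of left and right steps, and this interleaving differs between $T$ and $T'$. So the first step is to understand how two address words that have the same pattern ``up to a point'' then diverge. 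By Lemma~\ref{lem:addresses} (applied inductively, exactly as in the proof of Lemma~\ref{lem:dr}), for the leftmost leaf $i$ at which $T$ and $T'$ differ we have $\addr_T(i-1)=\addr_{T'}(i-1)$, and then $\addr_T(i)=u01^p$, $\addr_{T'}(i)=u01^{p'}$ share the common prefix $u01^{\min(p,p')}$; more generally I would set up the comparison so that at leaf $i$ the two addresses agree on a common prefix and the tails are strings of $0$'s and $1$'s whose $0$-counts differ by $\ld_T(i)-\ld_{T'}(i)$ and whose $1$-counts differ by $\rd_T(i)-\rd_{T'}(i)$.

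First I would reduce to an elementary algebraic statement. Write $\addr_T(i)=w_i$ and $\addr_{T'}(i)=w_i'$. Since $\gamma$ is a monoid homomorphism, $\gamma_{w_i}=\gamma_{w_i'}$ iff $\gamma_{w_i'}^{-1}\gamma_{w_i}=1$. Using the longest common prefix of $w_i$ and $w_i'$ this cancels to a statement about the differing tails. By Lemma~\ref{lem:addresses}, when we pass from leaf $i-1$ to leaf $i$, exactly one ``peel off a block of $1$'s, then descend a block of $0$'s'' move happens; the key observation is that after the common prefix $u$, both tails begin with the \emph{same} letter-pattern up to the first place of disagreement, and that first disagreement is always of the form: one tail is $\dots 0 1^{p}$ and the other is $\dots 0 1^{p'}$, or symmetrically with $0$ and $1$ swapped depending on whether we are comparing at a ``left divergence'' or a ``right divergence''. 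Cancelling the common parts on both sides, the condition $\gamma_{w_i}=\gamma_{w_i'}$ becomes exactly an equation of the shape $b\,a^{r}\,b^{-1}=a\,b^{-s}\,a^{-1}$ after conjugating by the common prefix's image (which is invertible, so it cancels), where $(r,s)$ is precisely the difference vector $(\ld_T(i)-\ld_{T'}(i),\rd_T(i)-\rd_{T'}(i))$. That is literally the defining condition of $\pairs{\alg{G}}{a}{b}$.

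The cleanest way to organize this is induction on $i$ (mirroring the proof of Lemma~\ref{lem:dr}\ref{lem:dr:l}): having established $\gamma_{\addr_T(j)}=\gamma_{\addr_{T'}(j)}$ — equivalently $\addr_T(j)\equiv\addr_{T'}(j)$ in the relevant sense — for all $j<i$, one shows using Lemma~\ref{lem:addresses} and Lemma~\ref{lem:da} that $\addr_T(i-1)=\addr_{T'}(i-1)=:u01^{p}$ forces $\addr_T(i)=u10^{q}$ and $\addr_{T'}(i)=u10^{q'}$, so that $\gamma_{\addr_T(i)}=\gamma_u\,b\,a^{q}$ and $\gamma_{\addr_{T'}(i)}=\gamma_u\,b\,a^{q'}$, whence (cancelling $\gamma_u$) equality holds iff $b a^{q-q'}=b$, i.e.\ $a^{q-q'}=1$ — and since $q-q'=\ld_T(i)-\ld_{T'}(i)$ while $\rd_T(i)=\rd_{T'}(i)$, this is $(\ld_T(i)-\ld_{T'}(i),0)\in\pairs{\alg{G}}{a}{b}$. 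For the general inductive step, where the two addresses may already disagree at an earlier leaf, one needs the more general ``common prefix, differing tails'' analysis and the full force of the defining equation $b a^{r}b^{-1}=ab^{-s}a^{-1}$; the conjugation by $b^{-1}a$ that appears in the lemma preceding the proposition is exactly what makes the cancellation work when a left-step tail meets a right-step tail. The main obstacle is precisely this bookkeeping in the nonabelian case: making sure the common prefixes really do cancel on both sides simultaneously and that the residual equation is in the exact normal form defining $\pairs{\alg{G}}{a}{b}$, rather than some conjugate or inverse of it — here one should use that $(r,s)\in\pairs{\alg{G}}{a}{b}\iff(-r,-s)\in\pairs{\alg{G}}{a}{b}$ (just proved) to absorb orientation issues. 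Once the leaf-by-leaf equivalence is established for every $i\in\nset{n}$, the biconditional in the statement is immediate by conjunction over all leaves.
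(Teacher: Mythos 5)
Your central claim---that for each individual leaf $i$, after cancelling the longest common prefix, the condition $\gamma_{\addr_T(i)}=\gamma_{\addr_{T'}(i)}$ reduces to the defining equation $b a^{r} b^{-1}=a b^{-s}a^{-1}$ with $(r,s)=(\ld_T(i)-\ld_{T'}(i),\rd_T(i)-\rd_{T'}(i))$---is false in the nonabelian case, and it is exactly the point where the proof lives. Take $T=\tau((x_1x_2)x_3)$ and $T'=\tau(x_1(x_2x_3))$ and look at leaf $2$: the addresses are $01$ and $10$, the common prefix is empty, the difference vector is $(0,0)$, so membership in $\pairs{\alg{G}}{a}{b}$ holds vacuously, yet $\gamma_{01}=ab\neq ba=\gamma_{10}$ whenever $a$ and $b$ do not commute. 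So there is no leaf-by-leaf biconditional, and ``conjunction over all leaves'' cannot be the last step; the tails left after removing a maximal common prefix start with different letters and are otherwise arbitrary words, not blocks of the form $01^p$ versus $01^{p'}$, so your ``common prefix, differing tails'' reduction does not produce the normal form defining $\pairs{\alg{G}}{a}{b}$. Your explicit inductive step only treats the special case $\addr_T(i-1)=\addr_{T'}(i-1)$ (first divergence), and the genuinely needed general step is precisely the ``bookkeeping'' you defer; as the counterexample shows, that bookkeeping cannot be done one leaf at a time.

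The mechanism that actually works (and is what the paper does) compares \emph{consecutive} leaves rather than cancelling prefixes within one leaf. Writing $\addr_T(i)=u01^p$, $\addr_T(i+1)=u10^q$ and $\addr_{T'}(i)=v01^{p'}$, $\addr_{T'}(i+1)=v10^{q'}$ (with $u\neq v$ allowed), one gets
$\Gamma'_i\Gamma_i^{-1}=\gamma_v\,a b^{p'-p}a^{-1}\gamma_u^{-1}$ and $\Gamma'_{i+1}\Gamma_{i+1}^{-1}=\gamma_v\,b a^{q'-q}b^{-1}\gamma_u^{-1}$: the same outer factors $\gamma_v$, $\gamma_u^{-1}$ appear in both, so they never need to cancel individually. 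Moreover the pair that lands in the defining equation is $(q'-q,\,p-p')$, which is the \emph{difference of consecutive difference vectors}, not the difference vector of leaf $i$ itself; one then runs an induction along the leaves, using that $\pairs{\alg{G}}{a}{b}$ is a subgroup to pass from leaf $i$ to leaf $i+1$ in both directions of the equivalence. Without this consecutive-leaf comparison (or some equivalent device importing information from neighbouring leaves), your outline cannot be completed.
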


\begin{proof}
First let us make some preliminary observations that we will use in the proof.
To simplify notation, we let $\ld_i = \ld_T(i)$, $\rd_i = \rd_T(i)$, $\ld'_i = \ld_{T'}(i)$ and $\rd'_i = \rd_{T'}(i)$ for $i \in \nset{n}$.
For a leaf $i \in \nset{n-1}$, let $z$ and $z'$ be the deepest common ancestors of $i$ and $i+1$ in $T$ and $T'$, respectively.
Setting $u = \addr_T(z)$ and $v = \addr_{T'}(z')$, we have
\begin{equation}
\label{eq:addresses}
\addr_T(i) = u 0 1^p, \; \addr_T(i+1) = u 1 0^q, \; \addr_{T'}(i) = v 0 1^{p'}, \; \addr_{T'}(i+1) = v 1 0^{q'}
\end{equation}
for some $p, q, p', q' \in \IN$ (see Lemma~\ref{lem:addresses}).
This implies the following relationships among the depths:
\begin{align*}
\ld_i &= \ld_T(z) +1, & \ld'_i &= \ld_{T'}(z') +1, &\ld_{i+1} &= \ld_T(z) +q, &\ld'_{i+1} &= \ld_{T'}(z') +q',\\ 
\rd_i &= \rd_T(z) +p, & \rd'_i &= \rd_{T'}(z') +p', &\rd_{i+1} &= \rd_T(z) +1, & \rd'_{i+1} &= \rd_{T'}(z') +1. 
\end{align*}
We can thus conclude that
\begin{equation}
\label{eq:depths}
(\ld_i-\ld'_i) - (\ld_{i+1} - \ld'_{i+1}) = q'-q
\text{ and }
(\rd_i-\rd'_i) - (\rd_{i+1} - \rd'_{i+1}) = p-p'.
\end{equation}

Let $\gamma_0 = a$ and $\gamma_1 = b$, and let us simply write $\Gamma_i = \gamma_{\addr_T(i)}$, $\Gamma'_i = \gamma_{\addr_{T'}(i)}$ (see Definition~\ref{def:group-product-word}).
We can compute these elements of $\alg{G}$ with the help of \eqref{eq:addresses}:
\begin{equation*}
\Gamma_i = \gamma_{u} a b^p, \; \Gamma_{i+1} = \gamma_{u} b a^q, \; \Gamma'_i = \gamma_{v} a b^{p'}, \; \Gamma'_{i+1} = \gamma_{v} b a^{q'}.
\end{equation*}
Therefore, we have
\begin{equation}
\label{eq:gammas}
\Gamma'_i \Gamma_i^{-1} = \gamma_{v} a b^{p'-p} a^{-1} \gamma_{u}^{-1}
\text{ and }
\Gamma'_{i+1} \Gamma_{i+1}^{-1} = \gamma_{v} b a^{q'-q} b^{-1} \gamma_{u}^{-1}.
\end{equation}

Now we are ready to begin the proof.
Assume first that $T \eqabG{a}{b}{\alg{G}} T'$, i.e., $\Gamma_i = \Gamma'_i$ for all $i \in \nset{n}$.
We are going to prove by induction on $i$ that $(\ld_i-\ld'_i,\rd_i-\rd'_i) \in \pairs{\alg{G}}{a}{b}$.
The base case $i=1$ is straightforward: $(\ld_1-\ld'_1,\rd_1-\rd'_1) = (\ld_1-\ld'_1,0) \in \pairs{\alg{G}}{a}{b}$ is equivalent to $a^{\ld_1-\ld'_1}=1$, and this is certainly true, as $a^{\ld_1} = \Gamma_1 = \Gamma'_1 = a^{\ld'_1}$. 
Now, for the induction step, let us assume that $(\ld_i-\ld'_i,\rd_i-\rd'_i) \in \pairs{\alg{G}}{a}{b}$ holds for some $i \in \nset{n-1}$.
Since $\pairs{\alg{G}}{a}{b}$ is a group, in order to prove that $(\ld_{i+1}-\ld'_{i+1},\rd_{i+1}-\rd'_{i+1}) \in \pairs{\alg{G}}{a}{b}$, it suffices to verify that $(q'-q,p-p') \in \pairs{\alg{G}}{a}{b}$, according to \eqref{eq:depths}.
We know that $\Gamma_i = \Gamma'_i$ and $\Gamma_{i+1} = \Gamma'_{i+1}$, and this implies that $\Gamma'_i \Gamma_i^{-1} = \Gamma'_{i+1}\Gamma_{i+1}^{-1} = 1$.
From \eqref{eq:gammas} it follows then that $a b^{p'-p} a^{-1} = b a^{q'-q} b^{-1}$, and this shows that we indeed have $(q'-q,p-p') \in \pairs{\alg{G}}{a}{b}$.

For the converse, assume that $(\ld_i-\ld'_i,\rd_i-\rd'_i) \in \pairs{\alg{G}}{a}{b}$ for all $i \in \nset{n}$.
We are going to prove by induction on $i$ that $\Gamma_i = \Gamma'_i$.
The base case $\Gamma_1 = \Gamma'_1$ is equivalent to $a^{\ld_1} = a^{\ld'_1}$, and this follows immediately, as $(\ld_1-\ld'_1,\rd_1-\rd'_1) = (\ld_1-\ld'_1,0) \in \pairs{\alg{G}}{a}{b}$.
For the induction step, let us assume that $\Gamma_i = \Gamma'_i$ for some $i \in \nset{n-1}$.
We assumed that $(\ld_i-\ld'_i,\rd_i-\rd'_i)$ and $(\ld_{i+1}-\ld'_{i+1},\rd_{i+1}-\rd'_{i+1})$ belong to the group $\pairs{\alg{G}}{a}{b}$, hence $(q'-q,p-p') \in \pairs{\alg{G}}{a}{b}$ by \eqref{eq:depths}.
This fact together with \eqref{eq:gammas} and the induction hypothesis $\Gamma_i = \Gamma'_i$ allow us to deduce that $\Gamma_{i+1} = \Gamma'_{i+1}$:
\[
1 = \Gamma'_i \Gamma_i^{-1} = \gamma_{v} a b^{p-p'} a^{-1} \gamma_{u}^{-1} = \gamma_{v} b a^{q'-q} b^{-1} \gamma_{u}^{-1} = \Gamma'_{i+1} \Gamma_{i+1}^{-1}.
\qedhere
\]
\end{proof}

\begin{remark}
\label{remark:quotient group}
Let us consider the quotient group $\alg{Q}:=(\ZZ\times\ZZ)/\pairs{\alg{G}}{a}{b}$ and the natural homomorphism $\nu\colon \ZZ\times\ZZ \to \alg{Q},\ (r,s) \mapsto (r,s)+\pairs{\alg{G}}{a}{b}$. 
For $\gamma_0=\nu(1,0)$ and $\gamma_1=\nu(0,1)$, Definition~\ref{def:group-product-word} gives $\gamma_{\addr_T(i)} = \nu(\ld_T(i),\rd_T(i))$; thus Proposition~\ref{prop:group vs lin} implies that $T\eqabG{a}{b}{\alg{G}}T'$ is equivalent to $T\eqabG{\gamma_0}{\gamma_1}{\alg{Q}}T'$.
Observe that $\alg{Q}$ is a two-generated Abelian group, hence it is a direct product of (at most) two cyclic groups.
This shows that each relation $\eqabG{a}{b}{\alg{G}}$ coincides with an equivalence relation modulo a group of a very restricted structure.
This somewhat surprising fact allows us to completely describe the fine associative spectra of linear quasigroups (see Theorem~\ref{thm:spectra}).

Propositions \ref{prop:linear}, \ref{prop:phi0-phi1-commute} and \ref{prop:phi0-phi1-powers-of-pi} all follow as special cases of Proposition~\ref{prop:group vs lin}.
It might be instructive to write out explicitly the corresponding groups $\Lambda:=\pairs{\alg{G}}{a}{b}$ and $\alg{Q}:=(\ZZ\times\ZZ)/\Lambda$.
\begin{itemize}
    \item In item \ref{prop:linear:phi0=phi1} of Proposition~\ref{prop:linear}, we have $\Lambda = \{ (r,s) : k \divides r+s \} = \ZZ(k,0) + \ZZ(k-1,1)$ and $\alg{Q} \cong \ZZ_k$. We can assume (up to the choice of the isomorphism $\alg{Q} \cong \ZZ_k$) that $\gamma_0 = \gamma_1 = 1$. 
    (Cf.\ Example~\ref{ex:group equivalence}.)
    \item In item \ref{prop:linear:phi1=id} of Proposition~\ref{prop:linear}, we have $\Lambda = \{ (r,s) : k \divides r \} = \ZZ(k,0) + \ZZ(0,1)$ and $\alg{Q} \cong \ZZ_k$. We can assume (up to the choice of the isomorphism $\alg{Q} \cong \ZZ_k$) that $\gamma_0 = 1$ and  $\gamma_1 = 0$.
    \item In item \ref{prop:linear:phi0=id} of Proposition~\ref{prop:linear}, we have $\Lambda = \{ (r,s) : k \divides s \} = \ZZ(1,0) + \ZZ(0,k)$ and $\alg{Q} \cong \ZZ_k$. We can assume (up to the choice of the isomorphism $\alg{Q} \cong \ZZ_k$) that $\gamma_0 = 0$ and  $\gamma_1 = 1$.
    \item In item \ref{prop:phi0-phi1-commute:pqrs} of Proposition~\ref{prop:phi0-phi1-commute}, we have $\Lambda = \{ (r,s) : k \divides s \text{ and } \ell \divides s \} = \ZZ(k,0) + \ZZ(0,\ell)$ and $\alg{Q} \cong \ZZ_k\times\ZZ_\ell$ (which can be visualised as a $k\times\ell$ grid drawn on the surface of a torus). We can assume (up to the choice of the isomorphism $\alg{Q} \cong \ZZ_k\times\ZZ_\ell$) that $\gamma_0 = (1,0)$ and  $\gamma_1 = (0,1)$.
    \item In Proposition~\ref{prop:phi0-phi1-powers-of-pi}, we have $\Lambda = \{ (r,s) : m \divides ar+bs \}$ and $\alg{Q} \cong \ZZ_m$. We can assume (up to the choice of the isomorphism $\alg{Q} \cong \ZZ_m$) that $\gamma_0 = a$ and  $\gamma_1 = b$.
\end{itemize}
\end{remark}

\subsection{Trees and grids}

The following definition is motivated by Proposition~\ref{prop:group vs lin}; it is essentially just a notation that will be convenient when applying that proposition.

\begin{definition}
For a subgroup $\Lambda \subseteq \ZZ\times\ZZ$, let us define the equivalence relation $\sim_\Lambda$ on binary trees as follows: for $T,T' \in \mathcal{T}_n$, 
\[
T \sim_\Lambda T' \iff (\ld_T(i)-\ld_{T'}(i),\rd_T(i)- \rd_{T'}(i)) \in \Lambda \text{ for all } i \in \nset{n}.
\]
Let $T_{\Lambda,n} := \card{\mathcal{T}_n / {\sim_\Lambda}}$.
\end{definition}

We have enumerated the numbers $T_{\Lambda,n}$ with $\Lambda = \ZZ(u,v) + \ZZ(w,0)$ for small values of the parameters $u$, $v$, $w$ and present them in Appendix~\ref{app:Tlin} (see Table~\ref{tab:grid-eq}).

\begin{remark}
\label{rem:Lambda vs R lin}
Note that if $\Lambda = \ZZ(u,v) + \ZZ(w,0)$, then the condition \eqref{eq:grid condition} for membership in $\Lambda$ reveals that ${\sim_{\Lambda}} = {\eqR{v}} \cap {\eqabm{v}{-u}{vw}}$.
\end{remark}

\begin{example}\label{ex:par-grid-2}
For the parallelogram grid $\Lambda = \ZZ(6,3) + \ZZ(10,0)$ of Example~\ref{ex:par-grid-1} (see Figure~\ref{fig:grid}), it holds that
${\sim_\Lambda} = {\eqR{3}} \cap {\eqabm{3}{-6}{30}} = {\eqR{3}} \cap {\eqabm{1}{8}{10}}$.
The second equality holds by Lemma~\ref{lem:abm}\ref{lem:abm:scale}.
\end{example}

In the previous subsection we assigned a grid to every group (with two designated elements); now we assign a grid to any pair of binary trees with the same number of leaves. 
Again, the definition speaks about arbitrary subgroups of $\ZZ\times\ZZ$, but we will prove that if $T \neq T'$ then the corresponding subgroup is two-dimensional.

\begin{definition}
For trees $T,T' \in \mathcal{T}_n$, let $\Lambda_{T,T'}$ denote the subgroup of $\ZZ\times\ZZ$ spanned by $\{ (\ld_T(i)-\ld_{T'}(i),\rd_T(i)-\rd_{T'}(i)) : i \in \nset{n} \}$.
Let us say that a subgroup $\Lambda \subseteq \ZZ\times\ZZ$ is \emph{treealisable}, if there exist binary trees $T,T' \in \mathcal{T}_n$ for some $n \in \IN_{+}$ such that $\Lambda = \Lambda_{T,T'}$.
\end{definition}

\begin{remark}
\label{remark:G vs Lambda}
\leavevmode
\begin{enumerate}[label=(\roman*)]
\item\label{remark:G vs Lambda:1}
Note that $T \sim_\Lambda T'$ is equivalent to $\Lambda_{T,T'} \subseteq \Lambda$ for all $\Lambda \in \Sub(\ZZ\times\ZZ)$ and for all $T,T' \in \mathcal{T}_n$.
\item\label{remark:G vs Lambda:2}
Using the above notations, we can state Proposition~\ref{prop:group vs lin} in a very compact way:
\[
T\eqabG{a}{b}{\alg{G}}T' \iff T \sim_{\pairs{\alg{G}}{a}{b}} T' \iff \Lambda_{T,T'} \subseteq \pairs{\alg{G}}{a}{b}.
\]
(The first equivalence is the statement of Proposition~\ref{prop:group vs lin}, the second one is just a repetition of the observation made in item \ref{remark:G vs Lambda:1}.)
\item\label{remark:G vs Lambda:3}
By Proposition~\ref{prop:linear-gen} and Remark~\ref{remark:linear-gen}, we can verify whether a linear quasigroup satisfies a bracketing identity just by comparing the grids corresponding to the identity and to the quasigroup:
\[
\alg{A} \satisfies t \approx t' \iff \Lambda_{T,T'} \subseteq  \pairs{\Aut(A, {+})}{\varphi_0}{\varphi_1},
\]
where $(A, {+}, \varphi_0, \varphi_1, 0)$ is an arithmetic form of $\alg{A}$ and $T, T'$ are the binary trees corresponding to $t, t'$.
\end{enumerate}
\end{remark}

The goal of this section is to characterise the treealisable subgroups of $\ZZ \times \ZZ$.
As we will prove in the next two lemmata, all two\hyp{}dimensional subgroups of $\ZZ\times\ZZ$ are treealisable (Lemma~\ref{lem:treealisable-2}), but no one\hyp{}dimensional subgroup is treealisable (Lemma~\ref{lem:treealisable-1}).

\begin{lemma}
\label{lem:treealisable-2}
For every two\hyp{}dimensional parallelogram grid $\Lambda$, there exist binary trees $T$ and $T'$ such that $\Lambda_{T,T'} = \Lambda$.
\end{lemma}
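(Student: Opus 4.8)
The plan is to reduce, by means of a simple concatenation identity for the operation $\wedge$, to constructing two explicit pairs of binary trees, and then to build those pairs out of combs. First I would establish the concatenation identity: for $T_i, T_i' \in \mathcal{T}_{n_i}$ with $i \in \{1,2\}$, one has $\Lambda_{T_1 \wedge T_2,\, T_1' \wedge T_2'} = \Lambda_{T_1,T_1'} + \Lambda_{T_2,T_2'}$. This is immediate: the address of the $j$-th leaf of $T_1 \wedge T_2$ is $0\,\addr_{T_1}(j)$ for $j \le n_1$ and $1\,\addr_{T_2}(j-n_1)$ for $j > n_1$, so forming $T_1 \wedge T_2$ adds $1$ to the left depth of every leaf in the first block and $1$ to the right depth of every leaf in the second block, and precisely the same offsets occur in $T_1' \wedge T_2'$ (here one uses that $T_i'$ has the same number of leaves as $T_i$). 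Hence the list of difference vectors $(\ld_{T_1\wedge T_2}(j)-\ld_{T_1'\wedge T_2'}(j),\ \rd_{T_1\wedge T_2}(j)-\rd_{T_1'\wedge T_2'}(j))_j$ is the concatenation of the lists for $(T_1,T_1')$ and $(T_2,T_2')$, and the generated subgroups add up. I would also record the companion fact that replacing a leaf $\ell$ by a copy of a fixed tree, simultaneously in $T$ and $T'$, leaves $\Lambda_{T,T'}$ unchanged (the new leaves merely repeat the difference vector at leaf $\ell$); this is handy for matching leaf counts.

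Next comes the reduction. Write $\Lambda = \ZZ(u,v) + \ZZ(w,0)$ with $0 \le u < w$ and $v > 0$, and set $M := vw/\gcd(u,w)$, so that $(0,M)$ is the point of $\Lambda$ immediately above the origin on the $y$-axis (Remark~\ref{remark:Sub2}); one checks similarly that $(w,0)$ is the point of $\Lambda$ immediately to the right of the origin on the $x$-axis, and that $\{(w,0),(0,M),(u,v)\}$ generates $\Lambda$. If $u = 0$ then $\Lambda$ is the rectangular grid $\ZZ(w,0)+\ZZ(0,v)$; if $u \ge 1$ then $\Lambda = [\ZZ(w,0)+\ZZ(0,M)] + [\ZZ(u,v)+\ZZ(0,M)]$, and both summands are two-dimensional (this is where $u \ne 0$ is used). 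Since treealisable subgroups must be two-dimensional (Lemma~\ref{lem:treealisable-1}), the cyclic groups $\ZZ(w,0)$, $\ZZ(0,M)$, $\ZZ(u,v)$ cannot be realized individually, but by the concatenation identity it suffices to realize the rectangular grid $\ZZ(w,0)+\ZZ(0,M)$ and the sheared grid $\ZZ(u,v)+\ZZ(0,M)$: concatenating pairs of trees realizing these produces a $\Lambda_{T,T'}$ that is contained in $\Lambda$ and contains all three generators, hence equals $\Lambda$.

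It then remains to realize these two grids by explicit combs, generalizing the constructions of Figures~\ref{fig:left-depth-mod-k} and \ref{fig:depth-mod-k}. For the rectangular grid $\ZZ(p,0)+\ZZ(0,q)$, the case $q=1$ is witnessed by the $(p+2)$-leaf left comb against $x_1 \wedge L$, where $L$ is the $(p+1)$-leaf left comb; their difference vectors are exactly $(p,0)$ and $(0,-1)$, spanning $\ZZ(p,0)+\ZZ(0,1)$. For general $q$ one nests this inside a right comb of height $q$ (the shape $\comb^1_q$ of Figure~\ref{fig:left-depth-mod-k} and its mirror are the relevant gadgets) so that the extraneous right-depth discrepancies all become multiples of $q$. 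For the sheared grid one generalizes the pair of Figure~\ref{fig:depth-mod-k}, which realizes the grid $\ZZ(v-1,1)+\ZZ(v,0)$ of $\eqD{v}$: a comb that, over each stretch of $v$ consecutive deepest-common-ancestor vertices, drifts horizontally by $u$ modulo $w$, padded with right combs so that all left- and right-depth discrepancies remain inside the target grid. In each case one checks that every difference vector lies in the target grid and that the difference vectors include its two generators.

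The step I expect to be the real obstacle is exactly this last verification — pinning down $\Lambda_{T,T'}$ precisely. By Lemma~\ref{lem:dr} (and the mechanism behind Lemma~\ref{lem:treealisable-1}), a local modification of a binary tree forces a whole block of leaves to change their depths, so one can never produce a single prescribed difference vector in isolation. The two trees must therefore be arranged so that \emph{every} forced difference vector lies in the target grid — giving $\Lambda_{T,T'} \subseteq \Lambda$ — while \emph{enough} of them survive to generate it — giving $\Lambda_{T,T'} \supseteq \Lambda$. Balancing these two requirements for the explicit trees, especially in the sheared case, is where the care is needed; the concatenation identity and the reduction above are routine.
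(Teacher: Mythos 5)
Your concatenation identity is correct (for $T_i,T'_i\in\mathcal{T}_{n_i}$ the difference-vector list of the pair $(T_1\wedge T_2,\,T'_1\wedge T'_2)$ is literally the concatenation of the two lists, so the spans add), and so is the arithmetic of the reduction: with $M=vw/\gcd(u,w)$ one has $(0,M)\in\Lambda$ and, for $u\geq 1$, $\Lambda=[\ZZ(w,0)+\ZZ(0,M)]+[\ZZ(u,v)+\ZZ(0,M)]$. Your $q=1$ gadget is also verified correctly: the $(p+2)$-leaf left comb against $x_1\wedge L$ has difference vectors $\{(p,0),(0,0),(0,-1)\}$ (this is in fact the pair of Figure~\ref{fig:left-depth-mod-k}). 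But the proof stops exactly where the lemma actually lives. For the rectangular grid $\ZZ(p,0)+\ZZ(0,q)$ with $q\geq 2$ you only say ``nest this inside a right comb of height $q$'', and for the sheared grid $\ZZ(u,v)+\ZZ(0,M)$ you describe a comb that ``drifts horizontally by $u$ modulo $w$, padded with right combs'' --- neither is an actual pair of trees, and no difference vectors are computed; you yourself flag this verification as ``the real obstacle''. Since by Lemma~\ref{lem:dr} every local discrepancy propagates along a block of leaves, producing a pair whose difference vectors all lie in the grid \emph{and} generate it is precisely the nontrivial content of the lemma, and it is left undone.

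Worse, the reduction does not genuinely simplify that content. Passing to opposite trees reflects $\Lambda_{T,T'}$ in the line $x=y$, so realizing $\ZZ(u,v)+\ZZ(0,M)$ with $u,v\geq 1$ is equivalent to realizing $\ZZ(v,u)+\ZZ(M,0)$, a general two-dimensional grid with a generator off both axes --- exactly the case you started from; e.g.\ for $\Lambda=\ZZ(1,1)+\ZZ(2,0)$ one gets $M=2$ and the ``sheared'' summand $\ZZ(1,1)+\ZZ(0,2)$ equals $\Lambda$ itself, so the decomposition reduces $\Lambda$ to a rectangular grid plus $\Lambda$. The paper avoids this by exhibiting a single explicit four-parameter pair of trees (Figure~\ref{fig:grid-trees}), computing all leaf-depth differences in Table~\ref{table:grid-trees} to get generators $(r,0)$, $(r-s,p)$, $(0,-q)$, and then choosing $p=v$, $q=vw$, $r=w$, $s=w-u$; that explicit construction and verification is what your proposal would still have to supply, so as it stands there is a genuine gap.
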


\begin{proof}
Let $p$, $q$, $r$, and $s$ be positive integers, and let $T$ and $T'$ be the binary trees with $p + q + r + s + 1$ leaves shown in Figure~\ref{fig:grid-trees}.
The left and right depths of each leaf in $T$ and $T'$ and the differences thereof are presented in Table~\ref{table:grid-trees}.
Therefore,
\begin{multline*}
\{ (\ld_T(i)-\ld_{T'}(i),\rd_T(i)-\rd_{T'}(i)) \mid i \in \nset{p + q + r + s + 1} \}\\
= \{ (0,0), (r,0), (r-s,p), (-s,p), (-s,p-q), (0,-q) \}.
\end{multline*}
It is straightforward to verify that this set generates the same subgroup of $\ZZ \times \ZZ$ as $\{(r,0), (r-s,p), (0,-q)\}$.

Let $\Lambda$ be the parallelogram grid generated by $\{(u,v), (w,0)\}$, with $0 \leq u < w$ and $v > 0$, and let us construct the trees $T$ and $T'$ as above with $p := v$, $q := vw$, $r := w$, $s := w - u$.
Then we obtain the parallelogram grid generated by $\{(u,v), (w,0), (0,-vw)\}$, which is clearly the same as $\Lambda$; thus we have $\Lambda = \Lambda_{T,T'}$.
\end{proof}

\begin{figure}
\hfill
\begin{tikzpicture}[puup,level 1/.style={sibling distance=3.5cm},level 2/.style={sibling distance=0.5cm}]
  \node(root) {}
    child {node {}
      child {node(btop) {}}
      child {node {}
        child {node {}}
        child[piste, level distance=1cm, sibling distance=1cm] {node {}
          child[viiva, level distance=0.5cm, sibling distance=0.5cm] {node(bbot) {}}
          child[viiva, level distance=0.5cm, sibling distance=0.5cm] {node {}
            child {node {}
              child[piste, level distance=1cm, sibling distance=1cm] {node {}
                child[viiva, level distance=0.5cm, sibling distance=0.5cm] {node {}
                  child {node(dtop) {}}
                  child {node {}
                    child {node {}}
                    child[piste, level distance=1cm, sibling distance=1cm] {node {}
                      child[viiva, level distance=0.5cm, sibling distance=0.5cm] {node(dbot) {}}
                      child[viiva, level distance=0.5cm, sibling distance=0.5cm] {node(yy) {}}
                    }
                  }
                }
                child[viiva, level distance=0.5cm, sibling distance=0.5cm] {node(cbot) {}}
              }
              child {node {}}
            }
            child {node(ctop) {}}
          }
        }
      }
    }
    child {node {}
      child {node {}
        child[piste, level distance=1cm, sibling distance=1cm] {node {}
          child[viiva, level distance=0.5cm, sibling distance=0.5cm] {node(zz) {}}
          child[viiva, level distance=0.5cm, sibling distance=0.5cm] {node(abot) {}}
        }
        child {node {}}
      }
      child {node(atop) {}}
    }
  ;
\draw[decoration={brace,raise=5pt},decorate] (atop.east) -- node[xshift=12pt,yshift=-6pt,draw=none,fill=none] {$s$} (abot.south);
\draw[decoration={brace,mirror,raise=5pt},decorate] (btop.west) -- node[xshift=-12pt,yshift=-6pt,draw=none,fill=none] {$p$} (bbot.south);
\draw[decoration={brace,raise=5pt},decorate] (ctop.east) -- node[xshift=12pt,yshift=-6pt,draw=none,fill=none] {$\,\,\,\,\,\,\,\,\,r - 1$} (cbot.south);
\draw[decoration={brace,mirror,raise=5pt},decorate] (dtop.west) -- node[xshift=-12pt,yshift=-6pt,draw=none,fill=none] {$q$} (dbot.south);
\draw (dtop.west) node[xshift=-5pt,yshift=5pt,draw=none,fill=none] {$x$};
\draw (yy.south) node[yshift=-6pt,draw=none,fill=none] {$y$};
\draw (zz.south) node[yshift=-6pt,draw=none,fill=none] {$z$};
\draw (root.north) node[yshift=20pt,draw=none,fill=none] {$T$};
\end{tikzpicture}
\hfill
\begin{tikzpicture}[puup,level 1/.style={sibling distance=3.5cm},level 2/.style={sibling distance=0.5cm}]
  \node(root) {}
    child {node {}
      child {node(btop) {}}
      child {node {}
        child {node {}}
        child[piste, level distance=1cm, sibling distance=1cm] {node {}
          child[viiva, level distance=0.5cm, sibling distance=0.5cm] {node(bbot) {}}
          child[viiva, level distance=0.5cm, sibling distance=0.5cm] {node(xx) {}
          }
        }
      }
    }
    child {node {}
      child {node {}
        child[piste, level distance=1cm, sibling distance=1cm] {node {}
          child[viiva, level distance=0.5cm, sibling distance=0.5cm] {node {}
            child {node(dtop) {}}
            child {node {}
              child {node {}}
              child[piste, level distance=1cm, sibling distance=1cm] {node {}
                child[viiva, level distance=0.5cm, sibling distance=0.5cm] {node(dbot) {}}
                child[viiva, level distance=0.5cm, sibling distance=0.5cm] {node {}
                  child {node {}
                    child[piste, level distance=1cm, sibling distance=1cm] {node {}
                      child[viiva, level distance=0.5cm, sibling distance=0.5cm] {node(yy) {}}
                      child[viiva, level distance=0.5cm, sibling distance=0.5cm] {node(cbot) {}}
                    }
                    child {node {}}
                  }
                  child {node(ctop) {}}
                }
              }
            }
          }
          child[viiva, level distance=0.5cm, sibling distance=0.5cm] {node(abot) {}}
        }
        child {node {}}
      }
      child {node(atop) {}}
    }
  ;
\draw[decoration={brace,raise=5pt},decorate] (atop.east) -- node[xshift=12pt,yshift=-6pt,draw=none,fill=none] {$s$} (abot.south);
\draw[decoration={brace,mirror,raise=5pt},decorate] (btop.west) -- node[xshift=-12pt,yshift=-6pt,draw=none,fill=none] {$p$} (bbot.south);
\draw[decoration={brace,raise=5pt},decorate] (ctop.east) -- node[xshift=12pt,yshift=-6pt,draw=none,fill=none] {$r$} (cbot.south);
\draw[decoration={brace,mirror,raise=5pt},decorate] (dtop.west) -- node[xshift=-12pt,yshift=-6pt,draw=none,fill=none] {$q - 1\,\,\,\,\,\,\,\,\,$} (dbot.south);
\draw (xx.south) node[yshift=-6pt,draw=none,fill=none] {$x$};
\draw (yy.south) node[yshift=-6pt,draw=none,fill=none] {$y$};
\draw (ctop.east) node[xshift=5pt,yshift=5pt,draw=none,fill=none] {$z$};
\draw (root.north) node[yshift=20pt,draw=none,fill=none] {$T'$};
\end{tikzpicture}
\hfill\mbox{}

\caption{Binary trees treealising a parallelogram grid.}
\label{fig:grid-trees}
\end{figure}
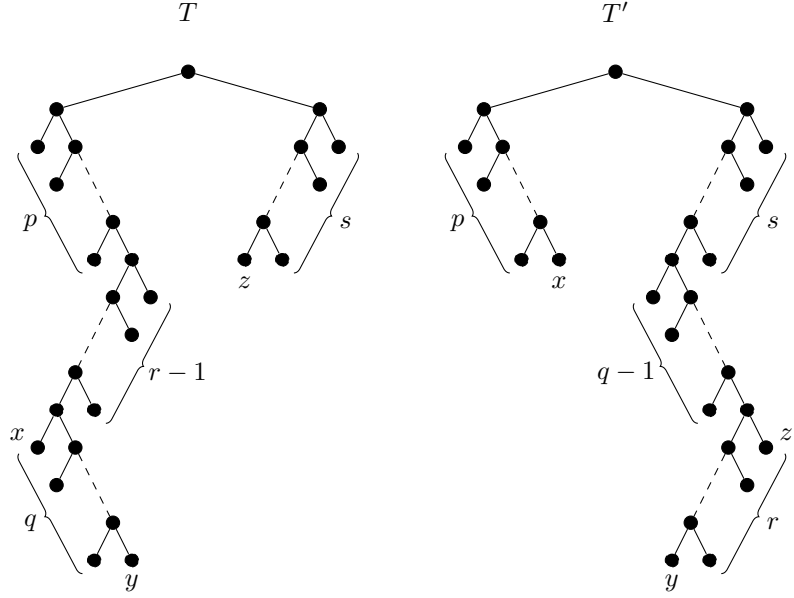

\begin{table}
\begin{center}
\footnotesize
\begin{tabular}{@{\quad\,\,}ccccccc}
\toprule
$i$                 & $\ld_T(i)$ & $\rd_T(i)$ & $\ld_{T'}(i)$ & $\rd_{T'}(i)$ & $\ld_T(i)-\ld_{T'}(i)$ & $\rd_T(i)-\rd_{T'}(i)$ \\
\midrule 
$1$                 & $2$           & $0$         & $2$              & $0$            & $0$       & $0$          \\
$2$                 & $2$           & $1$         & $2$              & $1$            & $0$       & $0$          \\
$\vdots$            & $\vdots$      & $\vdots$    & $\vdots$         & $\vdots$       & $\vdots$  & $\vdots$     \\
$p$                 & $2$           & $p - 1$     & $2$              & $p - 1$        & $0$       & $0$          \\
$\llap{$x = {}$}
 p + 1$             & $r + 1$       & $p$         & $1$              & $p$            & $r$       & $0$          \\
$p + 2$             & $r + 1$       & $p + 1$     & $s + 1$          & $1$            & $r - s$   & $p$          \\
$p + 3$             & $r + 1$       & $p + 2$     & $s + 1$          & $2$            & $r - s$   & $p$          \\
$\vdots$            & $\vdots$      & $\vdots$    & $\vdots$         & $\vdots$       & $\vdots$  & $\vdots$     \\
$p + q$             & $r + 1$       & $p + q - 1$ & $s + 1$          & $q - 1$        & $r - s$   & $p$          \\
$\llap{$y = {}$}
 p + q + 1$         & $r$           & $p + q$     & $r + s$          & $q$            & $-s$      & $p$          \\
$p + q + 2$         & $r - 1$       & $p + 1$     & $r + s - 1$      & $q + 1$        & $-s$      & $p - q$      \\
$p + q + 3$         & $r - 2$       & $p + 1$     & $r + s - 2$      & $q + 1$        & $-s$      & $p - q$      \\
$\vdots$            & $\vdots$      & $\vdots$    & $\vdots$         & $\vdots$       & $\vdots$  & $\vdots$     \\
$p + q + r$         & $1$           & $p + 1$     & $s + 1$          & $q + 1$        & $-s$      & $p - q$      \\
$\llap{$z = {}$}
 p + q + r + 1$     & $s$           & $1$         & $s$              & $q + 1$        & $0$       & $-q$         \\
$p + q + r + 2$     & $s - 1$       & $2$         & $s - 1$          & $2$            & $0$       & $0$          \\
$p + q + r + 3$     & $s - 2$       & $2$         & $s - 2$          & $2$            & $0$       & $0$          \\
$\vdots$            & $\vdots$      & $\vdots$    & $\vdots$         & $\vdots$       & $\vdots$  & $\vdots$     \\
$p + q + r + s + 1$ & $0$           & $2$         & $0$              & $2$            & $0$       & $0$          \\
\bottomrule
\end{tabular}
\end{center}

\bigskip
\caption{Left and right depth sequences of $T$ and $T'$ and their differences.}
\label{table:grid-trees}
\end{table}

\begin{example}
The proof of Lemma~\ref{lem:treealisable-2} reveals that the parallelogram grid $\Lambda = \ZZ(6,3) + \ZZ(10,0)$ of Example~\ref{ex:par-grid-1} (see Figure~\ref{fig:grid}) is treealised by (i.e., $\Lambda = \Lambda_{T,T'}$ holds for) the pair of trees $T$ and $T'$ shown in Figure~\ref{fig:grid-trees} with
$p = 3$,
$q = 30$,
$r = 10$,
$s = 4$.
In fact, we could take $q = 15$ instead of $30$.
\end{example}

\begin{lemma}
\label{lem:treealisable-1}
If $\Lambda$ is a one-dimensional subgroup of $\ZZ\times\ZZ$, then $\Lambda$ is not treealisable.
\end{lemma}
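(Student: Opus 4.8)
The plan is to prove the stronger statement that for \emph{any} binary trees $T, T' \in \mathcal{T}_n$, the subgroup $\Lambda_{T,T'}$ is never one\hyp{}dimensional; since one\hyp{}dimensional subgroups are by definition nontrivial, this shows at once that no such subgroup is treealisable. I would split into two cases according to whether $T = T'$. If $T = T'$, then $\Lambda_{T,T'} = \{(0,0)\}$, which is zero\hyp{}dimensional. If $T \neq T'$, the goal is to exhibit two linearly independent vectors among the generators $(\ld_T(i)-\ld_{T'}(i),\rd_T(i)-\rd_{T'}(i))$, $i \in \nset{n}$, of $\Lambda_{T,T'}$, which forces $\Lambda_{T,T'}$ to be two\hyp{}dimensional.

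The two vectors are supplied directly by Lemma~\ref{lem:dr}. Taking $\ell$ to be the least leaf at which the difference vector is nonzero, Lemma~\ref{lem:dr}\ref{lem:dr:l} tells us that $\rd_T(\ell) = \rd_{T'}(\ell)$, so this difference vector equals $(d,0)$ with $d \neq 0$. Symmetrically, taking $\ell'$ to be the greatest leaf at which the difference vector is nonzero, Lemma~\ref{lem:dr}\ref{lem:dr:r} gives $\ld_T(\ell') = \ld_{T'}(\ell')$, so this difference vector equals $(0,e)$ with $e \neq 0$. As $d \neq 0 \neq e$, the pair $(d,0),(0,e)$ is linearly independent over $\RR$, hence $\Lambda_{T,T'} \supseteq \ZZ(d,0) + \ZZ(0,e)$ is a two\hyp{}dimensional (parallelogram) grid, and therefore $\Lambda_{T,T'}$ itself is two\hyp{}dimensional.

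Putting the cases together, $\Lambda_{T,T'}$ is always either zero\hyp{} or two\hyp{}dimensional, so it is never equal to a one\hyp{}dimensional subgroup; thus a one\hyp{}dimensional subgroup cannot be treealisable. I do not anticipate a genuine obstacle here: all the combinatorial substance has been isolated in Lemma~\ref{lem:dr} (which in turn rests on Lemmata~\ref{lem:addresses} and \ref{lem:da}), and what remains is only bookkeeping — in particular, checking that the two extracted vectors are really nonzero (immediate, since $\ell$ and $\ell'$ are chosen as the extreme leaves where the difference is nonzero, so $\ell \neq \ell'$ and both vectors are nonzero by construction) and noting that a treealisation of a one\hyp{}dimensional grid would necessarily have $T \neq T'$.
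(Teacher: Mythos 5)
Your proposal is correct and follows essentially the same route as the paper's own proof: from $T \neq T'$ you extract, via Lemma~\ref{lem:dr}, a nonzero vector of the form $(d,0)$ at the least differing leaf and one of the form $(0,e)$ at the greatest, so that $\Lambda_{T,T'} \supseteq \ZZ(d,0)+\ZZ(0,e)$ is two\hyp{}dimensional, while $T=T'$ gives the trivial subgroup. The only cosmetic difference is that you phrase it as a case split on $T=T'$ versus $T\neq T'$, whereas the paper only treats $T\neq T'$ explicitly; the substance is identical.
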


\begin{proof}
Assume that $T, T' \in \mathcal{T}_n$ and $T \neq T'$.
We will prove that $\Lambda_{T,T'}$ is a two\hyp{}dimensional subgroup of $\ZZ\times\ZZ$.
Since $T \neq T'$, there is some $i \in \nset{n}$ such that $(\ld_T(i) - \ld_{T'}(i), \rd_T(i) - \rd_{T'}(i)) \neq (0,0)$, as binary trees are determined by their left (right) depth sequences.
By Lemma~\ref{lem:dr}\ref{lem:dr:l}, the least such $i$ satisfies
$(\ld_T(i) - \ld_{T'}(i), \rd_T(i) - \rd_{T'}(i)) = (k,0) \in \Lambda_{T,T'}$, where $k$ is a nonzero integer.
Similarly, by Lemma~\ref{lem:dr}\ref{lem:dr:r}, the greatest such $i$ satisfies
$(\ld_T(i) - \ld_{T'}(i), \rd_T(i) - \rd_{T'}(i)) = (0, \ell) \in \Lambda_{T,T'}$, where $\ell$ is a nonzero integer.
Therefore, $\Lambda_{T,T'} \supseteq \ZZ(k,0) + \ZZ(0,\ell)$, and this proves that $\Lambda_{T,T'}$ is indeed a two\hyp{}dimensional subgroup of $\ZZ\times\ZZ$.
\end{proof}

\begin{remark}
If $\Lambda = \ZZ(a,b)$ with $a<0$ and $b>0$, then we can provide an alternative proof for non-treealisability of $\Lambda$ using Proposition~\ref{prop:random walk}.
Let us consider the function $f(x)=x^{{-a}}+x^b$ on the real interval $[0,1]$.
Since $f$ is continuous and $0=f(0) < 1 < f(1)=2$, there is a number $\vartheta \in (0,1)$ such that $f(\vartheta)=1$.
If $\Lambda_{T,T'}=\Lambda$ for some $T,T' \in \mathcal{T}_n$, then we certainly have $T \sim_\Lambda T'$, and this implies $b\ld_T(i)-a\rd_T(i) = b\ld_{T'}(i)-a\rd_{T'}(i)$ for all $i \in \nset{n}$ by \eqref{eq:grid condition}.
We can rewrite this condition as $p^{\ld_T(i)} \cdot (1-p)^{\rd_T(i)} = p^{\ld_{T'}(i)} \cdot (1-p)^{\rd_{T'}(i)}$ with $p=\vartheta^b$ and $1-p=\vartheta^{-a}$.
Now Proposition~\ref{prop:random walk} shows that $T=T'$, i.e., $\Lambda = \{(0,0)\}$, which is clearly false.
\end{remark}

We have thus obtained a complete characterisation of treealisable subgroups of $\ZZ \times \ZZ$.

\begin{theorem}
\label{thm:treealisable}
    A subgroup of $\ZZ\times\ZZ$ is treealisable if and only if it is either a two-dimensional subgroup or the trivial subgroup.
\end{theorem}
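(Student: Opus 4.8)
The plan is to obtain the theorem as a short consequence of Lemmata~\ref{lem:treealisable-2} and \ref{lem:treealisable-1}, together with the trichotomy of subgroups of $\ZZ\times\ZZ$ recalled in Subsection~\ref{subsec:grids}: every subgroup of $\ZZ\times\ZZ$ is either the trivial (zero\hyp{}dimensional) subgroup $\{(0,0)\}$, a one\hyp{}dimensional subgroup $\ZZ(u,v)$ with $(u,v) \neq (0,0)$, or a two\hyp{}dimensional parallelogram grid, and these three types are mutually exclusive.

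For the ``if'' direction I would argue as follows. If $\Lambda = \{(0,0)\}$, pick any $n \in \IN_{+}$ and any $T \in \mathcal{T}_n$ and set $T' := T$; then the spanning set $\{(\ld_T(i)-\ld_{T'}(i),\rd_T(i)-\rd_{T'}(i)) : i \in \nset{n}\}$ equals $\{(0,0)\}$, so $\Lambda_{T,T'} = \{(0,0)\} = \Lambda$ and $\Lambda$ is treealisable. If $\Lambda$ is two\hyp{}dimensional, then it is treealisable by Lemma~\ref{lem:treealisable-2}.

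For the ``only if'' direction, suppose $\Lambda$ is treealisable, say $\Lambda = \Lambda_{T,T'}$ with $T, T' \in \mathcal{T}_n$. If $T = T'$, then as above $\Lambda = \{(0,0)\}$ is the trivial subgroup. If $T \neq T'$, then the argument in the proof of Lemma~\ref{lem:treealisable-1} shows that $\Lambda_{T,T'}$ contains a rectangular grid $\ZZ(k,0) + \ZZ(0,\ell)$ with $k,\ell$ nonzero, so $\Lambda$ is two\hyp{}dimensional. In either case $\Lambda$ is not one\hyp{}dimensional, hence $\Lambda$ is the trivial subgroup or a two\hyp{}dimensional subgroup, as desired.

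I do not expect a genuine obstacle here: all the substance already resides in the two preceding lemmata and in the (elementary) classification of subgroups of $\ZZ\times\ZZ$. The only point that warrants a moment's care is verifying that the three listed kinds of subgroups are exhaustive and mutually exclusive — the rank of a subgroup of $\ZZ\times\ZZ$ as a free abelian group is a complete invariant here — so that ruling out the one\hyp{}dimensional case via Lemma~\ref{lem:treealisable-1} leaves precisely the trivial and the two\hyp{}dimensional cases.
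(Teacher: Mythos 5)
Your proposal is correct and follows essentially the same route as the paper: the paper's proof also deduces the theorem directly from Lemmata~\ref{lem:treealisable-2} and \ref{lem:treealisable-1} together with the trivial observation that taking $T = T'$ treealises $\{(0,0)\}$. Your added care about the exhaustive and mutually exclusive trichotomy of subgroups of $\ZZ\times\ZZ$ is fine but implicit in the paper's setup in Subsection~\ref{subsec:grids}.
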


\begin{proof}
This follows from Lemmata~\ref{lem:treealisable-2} and \ref{lem:treealisable-1} and from the trivial fact that the trivial subgroup $\{(0,0)\} \leq \ZZ \times \ZZ$ is treealisable (just take $T = T'$).
\end{proof}

\section{The lattice of associative spectra of linear quasigroups}

The following theorem gives a complete characterization of fine associative spectra of linear quasigroups. 
Recall that we have identified bracketings with binary trees, hence equivalence relations on binary trees are also equivalences on bracketings, and vice versa.
(In particular, in the first item of the theorem $\sigma$ is given as an equivalence relation on bracketings, while in the other items $\sigma$ is given as an equivalence relation on binary trees.)

\begin{theorem}
\label{thm:spectra}
Let $\sigma_n$ be an equivalence relation on $B_n$ for each $n \in \IN_{+}$, and let $\sigma=(\sigma_n)_{n \in \IN_{+}}$.
Then the following four conditions are equivalent:
\begin{enumerate}[label={\upshape (\roman*)}]
\item\label{thm:spectra:sigma} $\sigma = \sigma(\alg{A})$ for some linear quasigroup $\alg{A}$;
\item\label{thm:spectra:G} $\sigma = {\eqabG{a}{b}{\alg{G}}}$ for some group $\alg{G}$ and elements $a,b \in G$;
\item\label{thm:spectra:Lambda} $\sigma = {\sim_\Lambda}$ for some $\Lambda \in \Sub(\ZZ\times\ZZ)$;
\item\label{thm:spectra:modulo} $\sigma = {\eqR{k}} \cap {\eqabm{a}{b}{m}}$ for suitable integers $k,a,b,m$.
\end{enumerate}
\end{theorem}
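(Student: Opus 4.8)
\emph{Plan.} I would prove the theorem by establishing the three equivalences $\ref{thm:spectra:sigma}\Leftrightarrow\ref{thm:spectra:G}$, $\ref{thm:spectra:G}\Leftrightarrow\ref{thm:spectra:Lambda}$, and $\ref{thm:spectra:Lambda}\Leftrightarrow\ref{thm:spectra:modulo}$. Most of the work has already been done: the machinery of Section~\ref{sect:grids} connects ``modulo group'' equivalences with subgroups of $\ZZ\times\ZZ$, and Remark~\ref{remark:linear-gen} connects linear quasigroups with ``modulo group'' equivalences; what remains is largely bookkeeping, together with one genuine construction. Let me begin with the bridge $\ref{thm:spectra:G}\Leftrightarrow\ref{thm:spectra:Lambda}$. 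For $\ref{thm:spectra:G}\Rightarrow\ref{thm:spectra:Lambda}$, Proposition~\ref{prop:group vs lin} (restated in Remark~\ref{remark:G vs Lambda}\ref{remark:G vs Lambda:2}) gives ${\eqabG{a}{b}{\alg{G}}} = {\sim_{\pairs{\alg{G}}{a}{b}}}$, and $\pairs{\alg{G}}{a}{b} \in \Sub(\ZZ\times\ZZ)$, so we take $\Lambda := \pairs{\alg{G}}{a}{b}$. For $\ref{thm:spectra:Lambda}\Rightarrow\ref{thm:spectra:G}$, given $\Lambda \in \Sub(\ZZ\times\ZZ)$ set $\alg{G} := (\ZZ\times\ZZ)/\Lambda$, $a := (1,0)+\Lambda$, $b := (0,1)+\Lambda$; then, as observed in Remark~\ref{remark:quotient group}, $\gamma_{\addr_T(i)} = (\ld_T(i),\rd_T(i))+\Lambda$ (using that $\alg{G}$ is abelian), so $T\eqabG{a}{b}{\alg{G}}T'$ holds iff $(\ld_T(i)-\ld_{T'}(i),\rd_T(i)-\rd_{T'}(i))\in\Lambda$ for all $i\in\nset{n}$, i.e.\ iff $T\sim_\Lambda T'$.

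Next, $\ref{thm:spectra:Lambda}\Leftrightarrow\ref{thm:spectra:modulo}$. For $\ref{thm:spectra:modulo}\Rightarrow\ref{thm:spectra:Lambda}$ I would note that ${\eqR{k}} = {\sim_{\Lambda_1}}$ with $\Lambda_1 := \{(r,s) : k\divides s\}$ and ${\eqabm{a}{b}{m}} = {\sim_{\Lambda_2}}$ with $\Lambda_2 := \{(r,s) : m\divides ar+bs\}$, these being kernels of homomorphisms from $\ZZ\times\ZZ$ onto cyclic groups and hence subgroups; unwinding the definition of $\sim_\Lambda$ (or using Remark~\ref{remark:G vs Lambda}\ref{remark:G vs Lambda:1}) gives ${\sim_{\Lambda_1}}\cap{\sim_{\Lambda_2}} = {\sim_{\Lambda_1\cap\Lambda_2}}$, with $\Lambda_1\cap\Lambda_2\in\Sub(\ZZ\times\ZZ)$. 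For $\ref{thm:spectra:Lambda}\Rightarrow\ref{thm:spectra:modulo}$ I would split on the dimension of $\Lambda$. If $\Lambda$ is two-dimensional, write $\Lambda = \ZZ(u,v)+\ZZ(w,0)$ with $0\le u<w$, $v>0$; then Remark~\ref{rem:Lambda vs R lin} gives ${\sim_\Lambda} = {\eqR{v}}\cap{\eqabm{v}{-u}{vw}}$. If $\Lambda$ is trivial or one-dimensional, then by Lemma~\ref{lem:treealisable-1} the relation $T\sim_\Lambda T'$ (equivalently $\Lambda_{T,T'}\subseteq\Lambda$) forces $\Lambda_{T,T'}=\{(0,0)\}$, i.e.\ $T=T'$; thus $\sim_\Lambda$ is the equality relation, which coincides with $\eqabm{0}{1}{0}$ (equality of right depth sequences determines the tree by Lemma~\ref{lem:da}\ref{lem:da:r}) and hence with ${\eqR{1}}\cap{\eqabm{0}{1}{0}}$, an instance of \ref{thm:spectra:modulo}.

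Finally, $\ref{thm:spectra:sigma}\Leftrightarrow\ref{thm:spectra:G}$. The direction $\ref{thm:spectra:sigma}\Rightarrow\ref{thm:spectra:G}$ is exactly Remark~\ref{remark:linear-gen}. For the converse $\ref{thm:spectra:G}\Rightarrow\ref{thm:spectra:sigma}$ — the one nontrivial construction — given a group $\alg{G}$ and $a,b\in G$, I would embed $\alg{G}$ into $\Aut(A,{+})$ for a suitable abelian group $(A,{+})$: take $A$ to be the free abelian group on the underlying set of $G$, with $\alg{G}$ acting by permuting the free basis via left translation, a faithful action by automorphisms. Letting $\varphi_0,\varphi_1\in\Aut(A,{+})$ be the images of $a,b$, the operation $x\circ y := \varphi_0(x)+\varphi_1(y)$ defines a linear quasigroup $\alg{A}$ over $(A,{+})$ with arithmetic form $(A,{+},\varphi_0,\varphi_1,0)$. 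Since the defining condition of $\pairs{\cdot}{\cdot}{\cdot}$ in Definition~\ref{def:Lambda_G} is an equation between words in the two designated elements, and our embedding is an injective homomorphism, $\pairs{\Aut(A,{+})}{\varphi_0}{\varphi_1} = \pairs{\alg{G}}{a}{b}$. Hence, by Remark~\ref{remark:linear-gen} and Proposition~\ref{prop:group vs lin}, $\sigma(\alg{A}) = {\eqabG{\varphi_0}{\varphi_1}{\Aut(A,{+})}} = {\sim_{\pairs{\Aut(A,{+})}{\varphi_0}{\varphi_1}}} = {\sim_{\pairs{\alg{G}}{a}{b}}} = {\eqabG{a}{b}{\alg{G}}} = \sigma$.

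The step I expect to be the main obstacle is precisely this last one: converting an abstract group together with two distinguished elements into an \emph{actual} linear quasigroup realising the prescribed spectrum. The resolution combines two standard-but-essential observations — that every group embeds faithfully into the automorphism group of some abelian group, and that $\pairs{\cdot}{\cdot}{\cdot}$ depends only on the subgroup generated by the two designated elements and is therefore invariant under such embeddings. The only other place calling for a little care is the degenerate case of $\ref{thm:spectra:Lambda}\Rightarrow\ref{thm:spectra:modulo}$, where $\sigma$ is the equality relation (equivalently, where the quasigroup turns out antiassociative): there Remark~\ref{rem:Lambda vs R lin} does not apply directly, and one instead invokes non-treealisability of one-dimensional subgroups (Lemma~\ref{lem:treealisable-1}).
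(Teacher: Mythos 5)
Your proposal is correct: every step either cites the appropriate result from the paper or supplies a valid argument, including the verification that $x \circ y = \varphi_0(x)+\varphi_1(y)$ is a linear quasigroup and the observation that $\pairs{\Aut(A,{+})}{\varphi_0}{\varphi_1} = \pairs{\alg{G}}{a}{b}$ because the defining equation of $\pairs{\cdot}{\cdot}{\cdot}$ is preserved and reflected by an injective homomorphism. However, you diverge from the paper exactly at the point where a construction is needed. The paper proves the single cycle \ref{thm:spectra:sigma}$\implies$\ref{thm:spectra:G}$\implies$\ref{thm:spectra:Lambda}$\implies$\ref{thm:spectra:modulo}$\implies$\ref{thm:spectra:sigma}: the first three implications coincide with your bookkeeping (Remark~\ref{remark:linear-gen}; Proposition~\ref{prop:group vs lin} via Remark~\ref{remark:G vs Lambda}; Remark~\ref{rem:Lambda vs R lin} plus Lemma~\ref{lem:treealisable-1} for the degenerate at-most-one-dimensional case), and the cycle is then closed by invoking Example~\ref{example:roots of unity}, where ${\eqR{\ell}} \cap {\eqabm{a}{b}{k}}$ is realised concretely as the fine spectrum of the linear quasigroup $\alg{A}_3 \times \alg{A}_4$ over $(\CC\times\CC,{+})$ built from roots of unity. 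You instead prove the pairwise equivalences and, in particular, \ref{thm:spectra:G}$\implies$\ref{thm:spectra:sigma} directly, by linearising the regular representation: $\alg{G}$ acts faithfully by automorphisms on the free abelian group on the set $G$, so any abstract pair $(a,b)$ is realised by automorphisms $(\varphi_0,\varphi_1)$ with the same $\Lambda$-group; your \ref{thm:spectra:Lambda}$\implies$\ref{thm:spectra:G} via the quotient $(\ZZ\times\ZZ)/\Lambda$ is exactly the content of Remark~\ref{remark:quotient group}. As for what each approach buys: the paper's closing step is shorter because the concrete complex realisation had already been set up, and it yields small explicit quasigroups on $\CC\times\CC$; your construction realises $\eqabG{a}{b}{\alg{G}}$ as a fine spectrum intrinsically, without first reducing to a grid or to the two-generated abelian quotient, at the cost of a large (free abelian) carrier. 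Your handling of the degenerate case (writing the equality relation as ${\eqR{1}} \cap {\eqabm{0}{1}{0}}$, justified by Lemma~\ref{lem:da}\ref{lem:da:r}, instead of the paper's ${\eqR{0}} \cap {\eqabm{0}{0}{0}}$) is equally valid.
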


\begin{proof}
Proposition~\ref{prop:linear-gen} shows that $\sigma(\alg{A}) = {\eqabG{\varphi_0}{\varphi_1}{\Aut(A, {+})}}$ (see Remark~\ref{remark:linear-gen}), and this verifies \ref{thm:spectra:sigma} $\implies$ \ref{thm:spectra:G}.
We have ${\eqabG{a}{b}{\alg{G}}} = {\sim_{\pairs{\alg{G}}{a}{b}}}$ by Proposition~\ref{prop:group vs lin} (see Remark~\ref{remark:G vs Lambda}\ref{remark:G vs Lambda:2}), thus \ref{thm:spectra:G} $\implies$ \ref{thm:spectra:Lambda}.

Let us now prove that \ref{thm:spectra:Lambda} $\implies$ \ref{thm:spectra:modulo}.
Assume first that $\Lambda \in \Sub(\ZZ\times\ZZ)$ is an at most one-dimensional subgroup.
We have seen in the proof of Lemma~\ref{lem:treealisable-1} that if $T \neq T'$, then $\Lambda_{T,T'}$ is a two-dimensional subgroup of $\ZZ\times\ZZ$; thus ${\sim_{\Lambda}}$ is the equality relation (cf. Remark~\ref{remark:G vs Lambda}\ref{remark:G vs Lambda:1}), which can be written, e.g., as ${\eqR{0}} \cap {\eqabm{0}{0}{0}}$ (note that the modulo $0$ congruence is just the equality relation).
On the other hand, if $\Lambda = \ZZ(u,v) + \ZZ(w,0)$ is a two-dimensional subgroup of $\ZZ\times\ZZ$, then \eqref{eq:grid condition} yields that ${\sim_{\Lambda}} = {\eqR{v}} \cap {\eqabm{v}{-u}{vw}}$ (see Remark~\ref{rem:Lambda vs R lin}).

Finally, \ref{thm:spectra:modulo} $\implies$ \ref{thm:spectra:sigma} follows from Example~\ref{example:roots of unity}.
\end{proof}

\begin{remark}
Item \ref{thm:spectra:modulo} in the previous theorem seems a bit asymmetric. 
This is due to the fact that we agreed to choose the generators of our parallelogram grids in such a way that one of the vectors lies on the $x$ axis.
Had we chosen one of the generators on the $y$ axis, then we would have gotten the relation in the form  ${\eqL{k}} \cap {\eqabm{a}{b}{m}}$ (of course, with other integers $k,a,b,m$).
For instance, for the parallelogram grid $\Lambda = \ZZ(6,3) + \ZZ(10,0)$ of Example~\ref{ex:par-grid-1}, such a generating set is $\{(0,15), (2,6)\}$, as can be easily observed from Figure~\ref{fig:grid}.
From \eqref{eq:grid condition} it follows that $\gcd(u,w) \divides r$; thus we could have written ${\sim_{\Lambda}} = {\eqL{\gcd(u,w)}} \cap {\eqR{v}} \cap {\eqabm{v}{-u}{vw}}$ in the proof of \ref{thm:spectra:Lambda} $\implies$ \ref{thm:spectra:modulo} in order to have a more symmetric (but redundant) form.
For the grid $\Lambda = \ZZ(6,3) + \ZZ(10,0)$ of Example~\ref{ex:par-grid-1}, we can write ${\sim_{\Lambda}} = {\eqR{3}} \cap {\eqabm{1}{8}{10}} = {\eqL{2}} \cap {\eqR{3}} \cap {\eqabm{1}{8}{10}}$ (cf. Example~\ref{ex:par-grid-2}).
\end{remark}

By Theorem~\ref{thm:spectra}, the fine associative spectra of linear quasigroups are exactly the relations of the form $\sim_\Lambda\ (\Lambda \leq \ZZ\times\ZZ)$.
We refine this theorem by describing when two subgroups of $\ZZ\times\ZZ$ yield the same equivalence relation, and we also determine the structure of the partially ordered set of fine spectra of linear quasigroups (the ordering is the containment of equivalence relations, or, equivalently, refinement of the corresponding partitions).

\begin{theorem}
\label{thm:FS}
Let $\FS$ denote the partially ordered set of fine spectra of linear quasigroups ordered by inclusion.
Then $\FS$ is a lattice, and the following map is an order isomorphism preserving meets:
\[
\Psi\colon \Sub_2(\ZZ\times\ZZ) \cup \{(0,0)\} \to \FS,\ \Lambda \to {\sim_\Lambda}.
\]
\end{theorem}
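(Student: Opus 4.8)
The plan is to derive the theorem from Theorems~\ref{thm:spectra} and \ref{thm:treealisable} together with the description of $\Sub_2(\ZZ\times\ZZ)$ in Remark~\ref{remark:Sub2}. The starting point is that, by Theorem~\ref{thm:spectra}, $\FS$ is precisely the set of relations $\sim_\Lambda$ with $\Lambda \in \Sub(\ZZ\times\ZZ)$, and that, by Remark~\ref{remark:G vs Lambda}\ref{remark:G vs Lambda:1}, $T \sim_\Lambda T'$ is equivalent to $\Lambda_{T,T'} \subseteq \Lambda$. Since $\Lambda_{T,T'}$ is two\hyp{}dimensional whenever $T \neq T'$ (this is the content of the proof of Lemma~\ref{lem:treealisable-1}), the relation $\sim_\Lambda$ depends on $\Lambda$ only through the two\hyp{}dimensional subgroups it contains; in particular $\sim_\Lambda$ is the equality relation whenever $\Lambda$ is at most one\hyp{}dimensional, so $\sim_\Lambda = {\sim_{\{(0,0)\}}}$ in that case. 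Hence every member of $\FS$ is already of the form $\sim_\Lambda$ with $\Lambda \in \Sub_2(\ZZ\times\ZZ) \cup \{(0,0)\}$, i.e., $\Psi$ is onto.

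Next I would check that $\Psi$ is an order isomorphism. One implication is trivial: $\Lambda \subseteq \Lambda'$ forces ${\sim_\Lambda} \subseteq {\sim_{\Lambda'}}$ straight from the definition of $\sim_\Lambda$. For the converse, assume ${\sim_\Lambda} \subseteq {\sim_{\Lambda'}}$ with $\Lambda, \Lambda' \in \Sub_2(\ZZ\times\ZZ) \cup \{(0,0)\}$. If $\Lambda = \{(0,0)\}$ there is nothing to prove; otherwise $\Lambda$ is two\hyp{}dimensional, hence treealisable by Lemma~\ref{lem:treealisable-2}, so $\Lambda = \Lambda_{T,T'}$ for some $n$\hyp{}leaf trees $T, T'$, whence $T \sim_\Lambda T'$, so $T \sim_{\Lambda'} T'$, so $\Lambda = \Lambda_{T,T'} \subseteq \Lambda'$ by Remark~\ref{remark:G vs Lambda}\ref{remark:G vs Lambda:1}. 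This shows at once that $\Psi$ is order\hyp{}reflecting and injective, so $\Psi$ is an order isomorphism from $(\Sub_2(\ZZ\times\ZZ) \cup \{(0,0)\}, {\subseteq})$ onto $(\FS, {\subseteq})$.

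It remains to identify the lattice structure and to verify meet\hyp{}preservation. By Remark~\ref{remark:Sub2}, $\Sub_2(\ZZ\times\ZZ)$ is a sublattice of $\Sub(\ZZ\times\ZZ)$; adjoining the trivial subgroup $\{(0,0)\}$, which lies below every two\hyp{}dimensional subgroup, turns $\Sub_2(\ZZ\times\ZZ) \cup \{(0,0)\}$ into a lattice in which the meet of two two\hyp{}dimensional subgroups is their intersection (again two\hyp{}dimensional by Remark~\ref{remark:Sub2}) and the meet of $\{(0,0)\}$ with anything is $\{(0,0)\}$; in either case the meet is the ordinary intersection of subgroups. Since $\Psi$ is an order isomorphism, $\FS$ is a lattice. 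Finally, for arbitrary $\Lambda_1, \Lambda_2 \in \Sub(\ZZ\times\ZZ)$ one reads off from Remark~\ref{remark:G vs Lambda}\ref{remark:G vs Lambda:1} that $T ({\sim_{\Lambda_1}} \cap {\sim_{\Lambda_2}}) T'$ holds iff $\Lambda_{T,T'} \subseteq \Lambda_1$ and $\Lambda_{T,T'} \subseteq \Lambda_2$, i.e., iff $\Lambda_{T,T'} \subseteq \Lambda_1 \cap \Lambda_2$, i.e., iff $T \sim_{\Lambda_1 \cap \Lambda_2} T'$; thus ${\sim_{\Lambda_1}} \cap {\sim_{\Lambda_2}} = {\sim_{\Lambda_1 \cap \Lambda_2}}$. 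Consequently the intersection of two members of $\FS$ is again a member of $\FS$ and is their meet there, and $\Psi$ carries the lattice meet $\Lambda_1 \cap \Lambda_2$ to the relational intersection ${\sim_{\Lambda_1}} \cap {\sim_{\Lambda_2}}$.

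The argument is essentially an assembly of facts already established, so I do not anticipate a real obstacle; the only delicate point is the bookkeeping around the trivial subgroup, namely explaining why the correct domain is $\Sub_2(\ZZ\times\ZZ) \cup \{(0,0)\}$ and confirming that attaching $\{(0,0)\}$ as a bottom element really yields a lattice. I would also add a remark explaining why only meet\hyp{}preservation is asserted: the join of $\sim_{\Lambda_1}$ and $\sim_{\Lambda_2}$ inside $\FS$ equals $\sim_{\Lambda_1 + \Lambda_2}$, which in general is strictly coarser than the join of these relations in the full lattice of equivalence relations (the transitive closure of their union), since $\Lambda_{T,T'} \subseteq \Lambda_1 + \Lambda_2$ does not force the existence of a chain of $\sim_{\Lambda_1}$\hyp{} and $\sim_{\Lambda_2}$\hyp{}steps linking $T$ to $T'$.
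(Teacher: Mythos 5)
Your proposal is correct and uses essentially the same ingredients as the paper's proof: Theorem~\ref{thm:spectra} for surjectivity, Lemmata~\ref{lem:treealisable-1} and \ref{lem:treealisable-2} together with Remark~\ref{remark:G vs Lambda}\ref{remark:G vs Lambda:1} for injectivity/order-reflection, and the same direct computation for ${\sim_{\Lambda_1}} \cap {\sim_{\Lambda_2}} = {\sim_{\Lambda_1 \cap \Lambda_2}}$. The only difference is cosmetic: you prove the order isomorphism directly and then transport the lattice structure, whereas the paper establishes bijectivity and meet-preservation first and invokes the fact that a bijective meet-homomorphism is an order isomorphism.
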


\begin{proof}
Theorem~\ref{thm:spectra} shows that $\Phi\colon \Sub(\ZZ\times\ZZ) \to \FS,\ \Lambda \to {\sim_\Lambda}$ is a surjective map.
However, this map is not injective: it follows from Lemma~\ref{lem:treealisable-1} that if $\Lambda \in \Sub(\ZZ\times\ZZ)$ is an at most one-dimensional subgroup, then $\Phi(\Lambda)$ is the equality relation (as explained in the proof of Theorem~\ref{thm:spectra}).
On the other hand, we can see from Lemma~\ref{lem:treealisable-2} that the restriction of $\Phi$ to $\Sub_2(\ZZ\times\ZZ)$ is injective.
Indeed, by Remark~\ref{remark:G vs Lambda}\ref{remark:G vs Lambda:1}, if $\Lambda_1,\Lambda_2 \in \Sub_2(\ZZ\times\ZZ)$, then $\Phi(\Lambda_1)=\Phi(\Lambda_2)$ if and only if $\Lambda_1$ and $\Lambda_2$ contain the same treealisable subgroups of $\ZZ\times\ZZ$. 
By Lemma~\ref{lem:treealisable-2}, both $\Lambda_1$ and $\Lambda_2$ are treealisable; hence $\Phi(\Lambda_1)=\Phi(\Lambda_2)$ can hold only if  $\Lambda_1$ and $\Lambda_2$ mutually contain each other.
This argument also shows that if $\Lambda_1 \in \Sub(\ZZ\times\ZZ)\setminus\Sub_2(\ZZ\times\ZZ)$ and $\Lambda_2 \in \Sub_2(\ZZ\times\ZZ)$, then $\Phi(\Lambda_1) \neq \Phi(\Lambda_2)$, as $\Phi(\Lambda_1)$ is the equality relation, while $\Phi(\Lambda_2)$ is not.
This proves that the map $\Psi$ given in the statement of the theorem is bijective.

In order to prove that $\Phi$ preserves meets, it suffices to verify that ${\sim_{\Lambda_1 \cap \Lambda_2}} = {\sim_{\Lambda_1}} \cap {\sim_{\Lambda_2}}$ for all $\Lambda_1,\Lambda_2 \in \Sub_2(\ZZ\times\ZZ)$.
This follows directly from Remark~\ref{remark:G vs Lambda}\ref{remark:G vs Lambda:1}:
\begin{align*}
T \sim_{\Lambda_1 \cap \Lambda_2} T' 
&\iff \Lambda_{T,T'} \subseteq \Lambda_1 \cap \Lambda_2\\ 
&\iff \Lambda_{T,T'} \subseteq \Lambda_1 \text{ and } \Lambda_{T,T'} \subseteq \Lambda_2\\
&\iff T \sim_{\Lambda_1} T' \text{ and } T \sim_{\Lambda_2} T'\\ 
&\iff  T \, ({\sim_{\Lambda_1}} \cap {\sim_{\Lambda_2}}) \, T'.
\end{align*}
To finish the proof, we just need to recall the well-known fact that every bijective meet-homomorphism is an order isomorphism.
\end{proof}

\begin{remark}
\label{remark:not a sublattice}
If we consider fine spectra of all groupoids (not just linear quasigroups), then we obtain an uncountable lattice $\alg{FS}$ in which the meet operation is the intersection and the join operation is the transitive closure of the union (just as in the lattice of all equivalence relations) \cite{LieWal-2009}.
By Theorem~\ref{thm:FS}, $\FS$ is a countable meet-subsemilattice of $\alg{FS}$.
However, it is not a sublattice, as $\FS$ is not closed under the join operation of $\alg{FS}$.
Indeed, by Example~\ref{ex:5-trees-2}, the transitive closure of the union of ${\eqL{3}}$ and ${\eqR{3}}$ is not the total relation.
However, $\Psi^{-1}({\eqL{3}}) = \ZZ(3,0)+\ZZ(0,1)$ and $\Psi^{-1}({\eqR{3}}) = \ZZ(1,0)+\ZZ(0,3)$, and the join of these two subgroups in $\Sub_2(\ZZ\times\ZZ)$ is clearly $\ZZ\times\ZZ$.
Thus the join of $\eqL{3}$ and $\eqR{3}$ in the lattice $\FS$ is $\Psi(\ZZ\times\ZZ)$, which is the total relation on $\mathcal{T}$. 
Thus, even though $\FS$ is a lattice in its own right, it is not a sublattice of $\alg{FS}$, and the join of two relations in $\FS$ can sometimes be larger than their join in $\alg{FS}$.
\end{remark}

\begin{remark}
\label{remark:coatoms}
We have seen in Remark~\ref{remark:Sub2} that $\Sub_2(\ZZ\times\ZZ)$ is a sublattice of $\Sub(\ZZ\times\ZZ)$, and Theorem~\ref{thm:FS} tells us that, up to isomorphism, the lattice $\FS$ can obtained by attaching an extra bottom element to $\Sub_2(\ZZ\times\ZZ)$.
This lattice has no atoms (again, by Remark~\ref{remark:Sub2}), and its coatoms correspond to the maximal subgroups of $\ZZ\times\ZZ$, i.e., to those subgroups that have a prime index.
Thus the coatoms of $\FS$ are the following:
\begin{itemize}
    \item $\Psi(\ZZ(p,0)+\ZZ(0,1)) = {\eqL{p}}$ for any prime number $p$;
    \item $\Psi(\ZZ(1,0)+\ZZ(0,p)) = {\eqR{p}}$ for any prime number $p$;
    \item $\Psi(\ZZ(p,0)+\ZZ(u,1)) = {\eqabm{1}{-u}{p}} = {\eqabm{1}{p-u}{p}}$ for any prime number $p$ and $u \in \{1,\dots,p-1\}$.
\end{itemize}
\end{remark}

We conclude the paper by several corollaries of our description of fine associative spectra of linear quasigroups. 
First we prove that the associative spectrum of a linear quasigroup grows either exponentially, or it is constant $1$ (in the latter case, the quasigroup is a group, of course). 
Similar dichotomy holds for graph algebras \cite{LehWal-2022}, but if one considers arbitrary groupoids, then polynomial spectra of arbitrary degrees do exist \cite{LieWal-2009}.
\begin{corollary}
    If $\alg{A}$ is a linear quasigroup, then either $s_n(\alg{A}) \geq 2^{n-2}$ or $s_n(\alg{A}) = 1$ for all $n \in \IN_{+}$.
\end{corollary}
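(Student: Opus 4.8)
The plan is to read off the answer from Theorem~\ref{thm:spectra}, by which the fine associative spectrum of a linear quasigroup $\alg{A}$ equals ${\sim_\Lambda}$ for some subgroup $\Lambda \leq \ZZ\times\ZZ$, and then run a case analysis on the type of $\Lambda$ (trivial, one-dimensional, or a two-dimensional parallelogram grid $\ZZ(u,v)+\ZZ(w,0)$ with $0 \leq u < w$ and $v > 0$, as classified in Subsection~\ref{subsec:grids}). In each case I want either to recognise ${\sim_\Lambda}$ as the total relation, so that $s_n(\alg{A}) = 1$ for all $n$, or to exhibit a quotient of $\mathcal{T}_n$ that ${\sim_\Lambda}$ refines and that is already known to have at least $2^{n-2}$ classes. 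The case $\Lambda$ at most one-dimensional is immediate: as noted in the proof of Theorem~\ref{thm:spectra}, here ${\sim_\Lambda}$ is the equality relation, so $s_n(\alg{A}) = \card{\mathcal{T}_n}$, and since $\mathcal{T}_n$ surjects onto $\mathcal{T}_n / {\eqL{2}}$ with $\NeqL{2}{n} = 2^{n-2}$ for $n \geq 2$, we get $s_n(\alg{A}) \geq 2^{n-2}$ (the inequality being trivial for $n \leq 2$, where $2^{n-2} \leq 1 = s_n(\alg{A})$).

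For the two-dimensional case I would use Remark~\ref{rem:Lambda vs R lin}, which gives ${\sim_\Lambda} = {\eqR{v}} \cap {\eqabm{v}{-u}{vw}}$, and split on $v$. If $v \geq 2$, then ${\sim_\Lambda}$ refines $\eqR{v}$, so $s_n(\alg{A}) \geq \NeqR{v}{n} = C_{v,n-1} \geq C_{2,n-1} = \NeqR{2}{n} = 2^{n-2}$ for $n \geq 2$, where the middle inequality is the monotonicity $C_{1,m} \leq C_{2,m} \leq C_{3,m} \leq \cdots$ of modular Catalan numbers. If $v = 1$, then ${\sim_\Lambda} = {\eqabm{1}{-u}{w}}$; here $w = 1$ forces $u = 0$ and makes ${\sim_\Lambda}$ the total relation, so $s_n(\alg{A}) = 1$ for all $n$, whereas for $w \geq 2$ we have $\gcd(1,w) = 1$, so Lemma~\ref{lem:cirmi} yields $s_n(\alg{A}) = \Neqabm{1}{-u}{w}{n} \geq \NeqR{w}{n} = C_{w,n-1} \geq C_{2,n-1} = 2^{n-2}$ for $n \geq 2$. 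Assembling the cases, for every linear quasigroup $\alg{A}$ the spectrum is either constant $1$ or bounded below by $2^{n-2}$ at every $n \in \IN_{+}$.

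I do not expect a genuine obstacle: the representation of spectra by subgroups of $\ZZ\times\ZZ$, the classification of those subgroups, the values $\NeqL{2}{n} = \NeqR{2}{n} = 2^{n-2}$, the monotonicity of the modular Catalan numbers, and Lemma~\ref{lem:cirmi} are all already in hand, and the proof is essentially bookkeeping. The one point deserving a little care is to arrange the case split so that whenever ${\sim_\Lambda}$ is not the total relation one isolates a congruence — $\eqR{v}$ with $v \geq 2$, or $\eqabm{1}{-u}{w}$ with $w \geq 2$ handled through Lemma~\ref{lem:cirmi} — whose number of classes is at least $2^{n-2}$. It is worth noting that the bound $2^{n-2}$ is sharp: the quasigroup of integer subtraction $x \circ y = x - y$ has $\varphi_0 = \id$ and $\varphi_1$ of order $2$, so by Proposition~\ref{prop:linear}\ref{prop:linear:phi0=id} its associative spectrum equals $\NeqR{2}{n} = 2^{n-2}$ for $n \geq 2$.
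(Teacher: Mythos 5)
Your proof is correct and follows essentially the same route as the paper: reduce via Theorem~\ref{thm:spectra} to a relation ${\sim_\Lambda}$, coarsen it to a single congruence-type relation, and then invoke the monotonicity of the modular Catalan numbers together with Lemma~\ref{lem:cirmi} and $\NeqR{2}{n}=2^{n-2}$. The only difference is bookkeeping: the paper passes to the coatoms of $\FS$ (Remark~\ref{remark:coatoms}), i.e.\ to prime moduli, whereas you case-split directly on the canonical form $\ZZ(u,v)+\ZZ(w,0)$ via Remark~\ref{rem:Lambda vs R lin}; both yield the same three cases handled by the same estimates.
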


\begin{proof}
If $\alg{A}$ is a nonassociative quasigroup, then its fine spectrum is contained in one of the coatoms of $\FS$; therefore, by Remark~\ref{remark:coatoms}, at least one of the following inequalities holds:
\begin{itemize}
    \item $s_n(\alg{A}) \geq \NeqL{p}{n}$ (where $p$ is a prime number),
    \item $s_n(\alg{A}) \geq \NeqR{p}{n}$ (where $p$ is a prime number),
    \item $s_n(\alg{A}) \geq \Neqabm{1}{-u}{p}{n}$ (where $p$ is a prime number and $u \in \{1,\dots,p-1\}$).
\end{itemize}
It was proved in \cite{HeiHua-2017} that $\NeqL{p}{n} \geq \NeqL{2}{n}$ for all $p \geq 2$, and we also know that $\NeqL{2}{n}= 2^{n-2}$. This settles the first case, and then the second case is also done, as $\NeqR{p}{n} = \NeqL{p}{n}$.
The third case reduces to the second one, as Lemma~\ref{lem:cirmi} gives $\Neqabm{1}{-u}{p}{n} \geq \NeqR{p}{n}$.
\end{proof}

\begin{remark}
Let us outline an alternative proof of the corollary above that does not use Lemma~\ref{lem:cirmi} and Remark~\ref{remark:coatoms}.
Let us say that a binary tree $T \in \mathcal{T}_n$ is \emph{special} if $\ld_T(1),\dots,\ld_T(n-2) \in \{1,2\}$.
Right depth sequences of binary trees were described in \cite{CsaWal-2000,LehWal-2021} as so-called zag sequences or Sisyphus sequences, and dualising this result we see that we can arbitrarily choose $\ld_T(i)=1$ or $\ld_T(i)=2$ for $i=1,\dots,n-2$ in the left depth sequence of a special binary tree (of course, we must have $\ld_T(n-1)=1$ and $\ld_T(n)=0$).
Thus, there are exactly $2^{n-2}$ special binary trees in $\mathcal{T}_n$ (we invite the reader to imagine how these trees and the corresponding bracketings look like, although we will not need this for the proof).
Now if these special trees are pairwise inequivalent in the fine associative spectrum of $\alg{A}$, then we clearly have $s_n(\alg{A}) \geq 2^{n-2}$.

Assume then that $T \sim_\Lambda T'$ for two different special binary trees $T,T' \in \mathcal{T}_n$, where $\sim_\Lambda$ is the fine associative spectrum of our linear quasigroup $\alg{A}$.
For the least $i \in \nset{n}$ such that $\ld_T(i) \neq \ld_{T'}(i)$, we have $(\ld_T(i) - \ld_{T'}(i), \rd_T(i) - \rd_{T'}(i)) = (k,0)$ for some nonzero integer $k$ (see Lemma~\ref{lem:dr}).
Since the left depths in $T$ and in $T'$ are either $1$ or $2$ (except for the last two leaves), we have either $k=2-1=1$ or $k=1-2=-1$.
In both cases $(1,0) \in \Lambda_{T,T'} \subseteq \Lambda$, which implies that $\Lambda$ is of the form $\Lambda = \ZZ(1,0) + \ZZ(0,\ell)$ for some positive integer $\ell$.
If $\ell=1$ then $\Lambda = \ZZ\times\ZZ$, thus $s_n(\alg{A})=1$, while if $\ell \geq 2$, then ${\sim_\Lambda} = {\eqR{\ell}}$, thus $s_n(\alg{A}) = \NeqR{\ell}{n} \geq \NeqR{2}{n} = 2^{n-2}$ (the last inequality is again from \cite{HeiHua-2017}).
\end{remark}

The next corollary describes the two extremal cases: associative and antiassociative linear quasigroups.
The former is trivial, the latter follows from results of \cite{Belousov-1966}, yet we feel that it is worth including their proofs as illustrations of our main results.

\begin{corollary}
\label{cor:antiassociative}
Let $\alg{A} = (A, {\circ})$ be a linear quasigroup over a group $(A, {+})$ with arithmetic form $(A, {+}, \varphi_0, \varphi_1, 0)$.
\begin{enumerate}[label={\upshape (\roman*)}]
\item The operation $\circ$ is associative if and only if $\varphi_0=\varphi_1=\id_A$.
\item The operation $\circ$ is antiassociative if and only if at least one of $\varphi_0$ and $\varphi_1$ is of infinite order.
\end{enumerate}
\end{corollary}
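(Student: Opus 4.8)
The plan is to read off both statements from the identification of the fine associative spectrum of $\alg{A}$ obtained earlier: by Remark~\ref{remark:linear-gen} together with Proposition~\ref{prop:group vs lin} (see Remark~\ref{remark:G vs Lambda}\ref{remark:G vs Lambda:2}), we have $\sigma(\alg{A}) = {\eqabG{\varphi_0}{\varphi_1}{\Aut(A,{+})}} = {\sim_\Lambda}$, where $\Lambda := \pairs{\Aut(A,{+})}{\varphi_0}{\varphi_1}$ is a subgroup of $\ZZ\times\ZZ$ (the neutral element of $\Aut(A,{+})$ being $\id_A$). It then suffices to translate ``associative'' and ``antiassociative'' into conditions on $\Lambda$, and these in turn into conditions on $\varphi_0,\varphi_1$ by unwinding Definition~\ref{def:Lambda_G}.

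For (i): $\circ$ is associative if and only if $\alg{A}$ satisfies $x_1(x_2x_3) \approx (x_1x_2)x_3$. Let $T$ and $T'$ be the two binary trees with three leaves. Computing leaf addresses, the difference vectors $(\ld_T(i)-\ld_{T'}(i),\rd_T(i)-\rd_{T'}(i))$ for $i = 1,2,3$ come out as $(-1,0)$, $(0,0)$, $(0,1)$, so $\Lambda_{T,T'} = \ZZ\times\ZZ$. By Remark~\ref{remark:G vs Lambda}\ref{remark:G vs Lambda:1}, $T \sim_\Lambda T'$ holds if and only if $\ZZ\times\ZZ \subseteq \Lambda$, i.e.\ $\Lambda = \ZZ\times\ZZ$. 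Now Definition~\ref{def:Lambda_G} gives: $(1,0)\in\Lambda$ iff $\varphi_1\varphi_0\varphi_1^{-1} = \id_A$ iff $\varphi_0 = \id_A$; and $(0,1)\in\Lambda$ iff $\varphi_0\varphi_1^{-1}\varphi_0^{-1} = \id_A$ iff $\varphi_1 = \id_A$. Since $(1,0)$ and $(0,1)$ generate $\ZZ\times\ZZ$, we conclude $\Lambda = \ZZ\times\ZZ$ iff $\varphi_0 = \varphi_1 = \id_A$, which is (i). (That $\varphi_0 = \varphi_1 = \id_A$ yields an associative $\circ$ is of course also immediate, since then $x \circ y = x+y$.)

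For (ii): $\circ$ is antiassociative if and only if $s_n(\alg{A}) = C_{n-1}$ for all $n$, i.e.\ if and only if $\sigma(\alg{A}) = {\sim_\Lambda}$ is the equality relation on $\mathcal{T}$. By Theorem~\ref{thm:treealisable} (built on Lemmata~\ref{lem:treealisable-2} and~\ref{lem:treealisable-1}), ${\sim_\Lambda}$ equals the equality relation exactly when $\Lambda$ is at most one-dimensional: if $\Lambda$ is two-dimensional it is treealisable, so $\Lambda = \Lambda_{T,T'}$ for some $T \neq T'$ and then $T \sim_\Lambda T'$; conversely, if $\Lambda$ is at most one-dimensional, then for every $T \neq T'$ the subgroup $\Lambda_{T,T'}$ is two-dimensional (Lemma~\ref{lem:treealisable-1}) and hence not contained in $\Lambda$, so $T \not\sim_\Lambda T'$. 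It thus remains to show that $\Lambda$ is at most one-dimensional if and only if at least one of $\varphi_0,\varphi_1$ has infinite order. If $\varphi_0,\varphi_1$ have finite orders $k,\ell$, then $\varphi_1\varphi_0^k\varphi_1^{-1} = \id_A$ and $\varphi_0\varphi_1^{-\ell}\varphi_0^{-1} = \id_A$, so $(k,0),(0,\ell)\in\Lambda$ and $\Lambda\supseteq\ZZ(k,0)+\ZZ(0,\ell)$ is two-dimensional; conversely, if $\Lambda$ is two-dimensional, then by Remark~\ref{remark:Sub2} it contains a rectangular grid $\ZZ(k,0)+\ZZ(0,\ell)$ with $k,\ell>0$, and then $(k,0)\in\Lambda$ forces $\varphi_0^k = \id_A$ while $(0,\ell)\in\Lambda$ forces $\varphi_1^\ell = \id_A$, so both are of finite order. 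This gives (ii).

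I do not anticipate a genuine obstacle here: the work is confined to unwinding the conjugation identity of Definition~\ref{def:Lambda_G} in the special cases $(1,0)$, $(0,1)$, $(k,0)$, $(0,\ell)$, and to recalling from Remark~\ref{remark:Sub2} that a two-dimensional subgroup of $\ZZ\times\ZZ$ always contains a rectangular subgrid. All of the substantive content is supplied by Proposition~\ref{prop:group vs lin} and Theorem~\ref{thm:treealisable}.
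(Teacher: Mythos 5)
Your proposal is correct and follows essentially the same route as the paper: identify the fine spectrum with $\sim_\Lambda$ for $\Lambda = \pairs{\Aut(A,{+})}{\varphi_0}{\varphi_1}$, translate associativity into $\Lambda=\ZZ\times\ZZ$ and antiassociativity into $\Lambda\notin\Sub_2(\ZZ\times\ZZ)$, and unwind Definition~\ref{def:Lambda_G} at the vectors $(1,0)$, $(0,1)$, $(k,0)$, $(0,\ell)$. The only difference is cosmetic: where the paper cites Theorem~\ref{thm:FS} for the lattice-theoretic facts, you inline them by computing $\Lambda_{T,T'}=\ZZ\times\ZZ$ for the two three-leaf trees and by re-deriving the ``equality relation iff $\Lambda$ is at most one-dimensional'' equivalence from Lemmata~\ref{lem:treealisable-2} and~\ref{lem:treealisable-1}, which is exactly the content those results supply.
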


\begin{proof}
Let us put $\Lambda = \pairs{\Aut(A,{+})}{\varphi_0}{\varphi_1}$; then, by propositions~\ref{prop:linear-gen}, \ref{prop:group vs lin} and Remark~\ref{remark:G vs Lambda}\ref{remark:G vs Lambda:2}, the fine associative spectrum of $(A, {\circ})$ is $\sim_\Lambda$
(as explained in the proof of Theorem~\ref{thm:spectra}).
\begin{enumerate}[label={\upshape (\roman*)}]
\item This can be easily proved directly from the definition of associativity, but let us use a sledgehammer to crack a nut, just for fun: according to Theorem~\ref{thm:FS}, $\sim_\Lambda$ is the top element of the lattice $\FS$ if and only if $\Lambda=\ZZ\times\ZZ$.
The latter holds only if $(1,0), (0,1) \in \Lambda$, and these mean exactly that $\varphi_0=\varphi_1=\id_A$,
according to Definition~\ref{def:Lambda_G}.

\item This was proved by Belousov \cite{Belousov-1966}, but we can also easily derive it from our results as follows. By (the proof of) Theorem~\ref{thm:FS}, $\sim_\Lambda$ is the bottom element of the lattice $\FS$ if and only if $\Lambda \notin \Sub_2(\ZZ\times\ZZ)$.
If $\varphi_0$ and $\varphi_1$ have finite orders $k$ and $\ell$, respectively, then $(k,0), (0,\ell) \in \Lambda$, hence $\Lambda \in \Sub_2(\ZZ\times\ZZ)$.
If, say, $\varphi_0$ is of infinite order, then $\Lambda$ contains no element of the form $(k,0)$ other than $(0,0)$, thus $\Lambda \notin \Sub_2(\ZZ\times\ZZ)$. \qedhere
\end{enumerate}
\end{proof}

\begin{example}
\label{example:antiassociative}
Consider a linear operation $x \circ y = ax+by$ over a unital ring $\alg{R}$ for units $a,b \in \alg{R}^\ast$, as in Example~\ref{ex:rings}.
Corollary~\ref{cor:antiassociative} gives that this operation is not antiassociative if and only if both $a$ and $b$ are roots of unity.
\end{example}

The last corollary describes implications between bracketing identities within the class of linear quasigroups.
Note that this is different from the usual deduction rules in equational theories (cf. Remark~\ref{remark:not a sublattice}).

\begin{corollary}
\label{cor:consequence}
Let $t_1, t'_1, t_2, t'_2$ be bracketings of size $n$, and let $T_1, T'_1, T_2, T'_2$ be the corresponding binary trees.
Then the following two conditions are equivalent:
\begin{enumerate}[label={\upshape (\roman*)}]
\item\label{cor:consequence:1} $t_2 \approx t'_2$ is a consequence of $t_1 \approx t'_1$ in the class of linear quasigroups, i.e., for every linear quasigroup $\alg{A}$, we have $\alg{A} \satisfies t_1 \approx t'_1 \implies \alg{A} \satisfies t_2 \approx t'_2$;
\item\label{cor:consequence:2} $\Lambda_{T_2,T'_2} \subseteq \Lambda_{T_1,T'_1}$.
\end{enumerate}
\end{corollary}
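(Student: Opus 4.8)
The plan is to reduce everything to the dictionary between bracketing identities over linear quasigroups and subgroups of $\ZZ\times\ZZ$ that is already in place. For a linear quasigroup $\alg{A}$ with arithmetic form $(A,{+},\varphi_0,\varphi_1,0)$, abbreviate $\Lambda_\alg{A} := \pairs{\Aut(A,{+})}{\varphi_0}{\varphi_1}$. By Remark~\ref{remark:G vs Lambda}\ref{remark:G vs Lambda:3}, for any bracketings $t,t' \in B_n$ with corresponding trees $T,T'$ one has $\alg{A}\satisfies t \approx t'$ if and only if $\Lambda_{T,T'} \subseteq \Lambda_\alg{A}$. With this, the implication \ref{cor:consequence:2}~$\Rightarrow$~\ref{cor:consequence:1} is immediate: if $\Lambda_{T_2,T'_2} \subseteq \Lambda_{T_1,T'_1}$ and $\alg{A}$ is a linear quasigroup satisfying $t_1 \approx t'_1$, then $\Lambda_{T_1,T'_1} \subseteq \Lambda_\alg{A}$, whence $\Lambda_{T_2,T'_2} \subseteq \Lambda_\alg{A}$ and therefore $\alg{A}\satisfies t_2 \approx t'_2$.

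For the converse \ref{cor:consequence:1}~$\Rightarrow$~\ref{cor:consequence:2}, the idea is to produce one ``optimal'' linear quasigroup that forces the inclusion. Put $\Lambda := \Lambda_{T_1,T'_1} \in \Sub(\ZZ\times\ZZ)$. By the equivalence of conditions \ref{thm:spectra:Lambda} and \ref{thm:spectra:sigma} in Theorem~\ref{thm:spectra}, there is a linear quasigroup $\alg{A}$ with $\sigma(\alg{A}) = {\sim_\Lambda}$; by Remark~\ref{remark:linear-gen} this says exactly that ${\sim_{\Lambda_\alg{A}}} = {\sim_\Lambda}$ as equivalence relations on binary trees. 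Combining this with Remark~\ref{remark:G vs Lambda}\ref{remark:G vs Lambda:1}, for every pair $S,S'$ of $n$\hyp{}leaf trees we obtain $\Lambda_{S,S'} \subseteq \Lambda_\alg{A} \iff \Lambda_{S,S'} \subseteq \Lambda$. Taking $S = T_1$, $S' = T'_1$ and using $\Lambda_{T_1,T'_1} = \Lambda$, we see that $\alg{A}\satisfies t_1 \approx t'_1$; hypothesis \ref{cor:consequence:1} then gives $\alg{A}\satisfies t_2 \approx t'_2$, i.e.\ $\Lambda_{T_2,T'_2} \subseteq \Lambda_\alg{A}$, and hence $\Lambda_{T_2,T'_2} \subseteq \Lambda = \Lambda_{T_1,T'_1}$, which is \ref{cor:consequence:2}.

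I do not expect a genuine obstacle: the statement is essentially a repackaging of Theorem~\ref{thm:spectra} together with the grid reformulation of satisfaction of bracketing identities. The one point needing care is the passage from ``$\alg{A}$ realises the \emph{relation} $\sim_\Lambda$'' to ``$\Lambda_{S,S'}\subseteq\Lambda_\alg{A}$ behaves exactly like $\Lambda_{S,S'}\subseteq\Lambda$'', which is exactly what Remark~\ref{remark:G vs Lambda}\ref{remark:G vs Lambda:1} supplies; note that we do \emph{not} need $\Lambda_\alg{A} = \Lambda$ on the nose. If one prefers an explicit witness instead of invoking Theorem~\ref{thm:spectra}, one can split via Theorem~\ref{thm:treealisable} into the case $\Lambda = \{(0,0)\}$ (take, e.g., $x \circ y = 2x+3y$ on $\CC$, where multiplicative independence of $2$ and $3$ yields $\Lambda_\alg{A} = \{(0,0)\}$) and the case $\Lambda = \ZZ(u,v)+\ZZ(w,0)$ two\hyp{}dimensional (take a product of complex linear quasigroups built from roots of unity as in Example~\ref{example:roots of unity}, and check $\Lambda_\alg{A} = \Lambda$ using Remark~\ref{rem:Lambda vs R lin} and \eqref{eq:grid condition}). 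Finally, I would close with a one\hyp{}line reminder that \ref{cor:consequence:1} is a semantic consequence \emph{within the class of linear quasigroups} and is in general strictly weaker than equational deducibility, as already illustrated in Remark~\ref{remark:not a sublattice}.
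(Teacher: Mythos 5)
Your proposal is correct and takes essentially the same route as the paper: both directions rest on the dictionary $\alg{A}\satisfies t\approx t'\iff\Lambda_{T,T'}\subseteq\pairs{\Aut(A,{+})}{\varphi_0}{\varphi_1}$ from Remark~\ref{remark:G vs Lambda}, and your converse step of specialising hypothesis \ref{cor:consequence:1} to a linear quasigroup whose fine spectrum is $\sim_{\Lambda_{T_1,T'_1}}$ is exactly the paper's move of putting $\Lambda=\Lambda_{T_1,T'_1}$ in its reformulation of \ref{cor:consequence:1}. The only cosmetic difference is that you invoke Theorem~\ref{thm:spectra} directly (so you can even sidestep Theorem~\ref{thm:treealisable}), whereas the paper quantifies over $\Sub_2(\ZZ\times\ZZ)\cup\{(0,0)\}$ via Theorem~\ref{thm:FS}, and your observation that $\Lambda_{\alg{A}}$ need not equal $\Lambda$ on the nose is handled correctly by Remark~\ref{remark:G vs Lambda}\ref{remark:G vs Lambda:1}.
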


\begin{proof}
According to Theorem~\ref{thm:FS}, fine spectra of linear quasigroups are exactly the equivalence relations of the form $\sim_\Lambda$ with $\Lambda \in \Sub_2(\ZZ\times\ZZ) \cup \{(0,0)\}$.
Thus, using Remark~\ref{remark:G vs Lambda}\ref{remark:G vs Lambda:3}, we can reformulate condition \ref{cor:consequence:1} of the corollary as follows:
\begin{equation}\label{eq:cor:conseqence:1}
\forall \Lambda \in \Sub_2(\ZZ\times\ZZ) \cup \{(0,0)\}\colon \Lambda_{T_1,T'_1} \subseteq \Lambda \implies \Lambda_{T_2,T'_2} \subseteq \Lambda.   
\end{equation}
It is clear that condition \ref{cor:consequence:2} implies \eqref{eq:cor:conseqence:1}.
Conversely, assume that \eqref{eq:cor:conseqence:1} holds, and let us put $\Lambda = \Lambda_{T_1,T'_1}$ (note that in this case $\Lambda$ indeed belongs to $\Sub_2(\ZZ\times\ZZ) \cup \{(0,0)\}$, by Theorem~\ref{thm:treealisable}).
Now \eqref{eq:cor:conseqence:1} yields $\Lambda_{T_2,T'_2} \subseteq \Lambda_{T_1,T'_1}$; hence condition \ref{cor:consequence:2} is satisfied.
\end{proof}

\begin{example}
As an illustration of Corollary~\ref{cor:consequence}, let us show that if a linear quasigroup $\alg{A}$ satisfies the identity
\begin{equation}
x_1 ( x_2 ( x_3 ( \dots ( x_{m-1} x_m ) \dots ) ) ) \approx ( ( \dots ( ( x_1 x_2 ) x_3 ) \dots ) x_{m-1} ) x_m
\label{eq:ex:NieKep}
\end{equation}
for some $m \geq 3$, then $\alg{A}$ is associative and hence a group.
(This was proved in a slightly more general form for division groupoids by Niemenmaa and Kepka \cite[Theorem~4.1]{NieKep-1992}.)

The case $m = 3$ is trivial, so we assume that $m \geq 4$.
By Corollary~\ref{cor:consequence}, it suffices to show that $\Lambda_{T_2,T'_2} \subseteq \Lambda_{T_1,T'_1}$, where $T_1$ and $T'_1$ are the binary trees corresponding to the two terms in identity \eqref{eq:ex:NieKep} and $T_2$ and $T'_2$ correspond to the associative law, i.e., $\Lambda_{T_2,T'_2} = \ZZ \times \ZZ$.
The left and right depths of the leaves in $T_1$ and $T'_1$ and the differences thereof are shown in the following table.

\medskip
\begin{center}
\begin{tabular}{@{\quad\,\,}ccccccc}
\toprule
$i$                 & $\ld_{T_1}(i)$ & $\rd_{T_1}(i)$ & $\ld_{T'_1}(i)$ & $\rd_{T'_1}(i)$ & $\ld_{T_1}(i)-\ld_{T'_1}(i)$ & $\rd_{T_1}(i)-\rd_{T'_1}(i)$ \\
\midrule 
$1$                 & $1$           & $0$         & $m-1$            & $0$            & $2-m$     & $0$          \\
$2$                 & $1$           & $1$         & $m-2$            & $1$            & $3-m$     & $0$          \\
$\vdots$            & $\vdots$      & $\vdots$    & $\vdots$         & $\vdots$       & $\vdots$  & $\vdots$     \\
$m-1$               & $1$           & $m-2$       & $1$              & $1$            & $0$       & $m-3$        \\
$m$                 & $0$           & $m-1$       & $0$              & $1$            & $0$       & $m-2$        \\
\bottomrule
\end{tabular}
\end{center}

\medskip\noindent
Therefore,
$\{(2-m,0), (3-m,0), (0,m-3), (0,m-2)\} \subseteq \Lambda_{T_1,T'_1}$.
Consequently, $\Lambda_{T_1,T'_1}$ contains the pairs
$(3-m,0) - (2-m,0) = (1,0)$ and $(0,m-2) - (0,m-3) = (0,1)$, and it follows that
$\Lambda_{T_1,T'_1} = \ZZ \times \ZZ$, which concludes the proof.
\end{example}

%%%%%%%%%%%%%%%%%%%%%%%%%%%%%%%%%%%%%%%
%%%%%%%%%%%%%%%%%%%%%%%%%%%%%%%%%%%%%%%
\section{Open problems}

\begin{itemize}
\item Find closed formulas for the number of equivalence classes of $n$\hyp{}leaf binary trees under the various equivalence relations considered in this paper: $\eqD{k}$ (for $k \geq 3$), $\eqLR{k}{\ell}$, $\eqabm{a}{b}{m}$, $\sim_\Lambda$.
\item Catalan numbers and their modular variants are known to count many kinds of combinatorial objects.
In a similar way, find new interpretations for the numbers $\NeqD{k}{n}$, $\eqLR{k}{\ell}$, $\Neqabm{a}{b}{m}{n}$, $T_{\Lambda,n}$.
\item Extend the results of the current paper to affine quasigroups.
\end{itemize}

\appendix
\section{Numerical data}
\label{app:Tlin}

Table~\ref{tab:Nabmn} shows the number of $\eqabm{a}{b}{m}$\hyp{}equivalence classes of $\mathcal{T}_n$, for $a, b, m \leq 14$.
Note that, by Lemma~\ref{lem:abm}, the triples $(a,b,m)$ and $(a',b',m')$ yield the same sequence if
\begin{itemize}
\item $a' = \ell a$, $b' = \ell b$, $m' = \ell m$ for some $\ell \in \IN_+$;
\item $a' = b$, $b' = a$, $m' = m$; or
\item $a' = \ell a$, $b' = \ell b$, $m' = m$ for some unit $\ell$ modulo $m$.
\end{itemize}
Therefore we list in the table only those triples $(a, b, m)$ for which $\gcd(a, b, m) = 1$, $a \leq b$, and $a$ and $b$ are the smallest possible with respect to multiplication by units.
The table entries are arranged in the lexicographical order of the first 14 terms of the sequence $(\Neqabm{a}{b}{m}{n})_{n \in \IN_+}$.
The last column indicates cases where the sequence coincides with that of another equivalence relation based on (left, right) depth sequences, according to Lemma~\ref{lem:abm}.

Table~\ref{tab:grid-eq} shows the number $T_{\Lambda,n}$ of $\sim_\Lambda$\hyp{}equivalence classes of $\mathcal{T}_n$, where $\Lambda$ is the parallelogram grid $\ZZ(u,v) + \ZZ(w,0)$ with $0 \leq u < w \leq 5$ and $0 < v \leq 5$.

\newenvironment{outdent}
{\begin{list}{}{\leftmargin-2cm\rightmargin\leftmargin}\centering\item\relax}
{\end{list}\ignorespacesafterend}

\tablefirsthead{%
\toprule
$a$ & $b$ & $m$ & $n = {}$1 & 2 & 3 & 4 & 5 & 6 & 7 & 8 & 9 & 10 & 11 & 12 & 13 & 14 \\
\midrule
}
\tablehead{%
\toprule
\multicolumn{17}{l}{\textit{Continued from the previous page.}} \\
\midrule
$a$ & $b$ & $m$ & $n = {}$1 & 2 & 3 & 4 & 5 & 6 & 7 & 8 & 9 & 10 & 11 & 12 & 13 & 14 \\
\midrule
}
\tabletail{%
\midrule
\multicolumn{17}{r}{\textit{Continued on the next page.}} \\
\bottomrule
}
\tablelasttail{\bottomrule}
\tablecaption{$\Neqabm{a}{b}{m}{n}$}
\label{tab:Nabmn}
\begin{outdent}
\begin{footnotesize}
\begin{supertabular}{rrr*{14}{r}l}
1 & 1 & 1 & 1 & 1 & 1 & 1 & 1 & 1 & 1 & 1 & 1 & 1 & 1 & 1 & 1 & 1 & \\
1 & 2 & 2 & 1 & 1 & 2 & 4 & 8 & 16 & 32 & 64 & 128 & 256 & 512 & 1024 & 2048 & 4096 & $\NeqL{2}{n}$ \\
1 & 1 & 2 & 1 & 1 & 2 & 5 & 10 & 21 & 42 & 85 & 170 & 341 & 682 & 1365 & 2730 & 5461 & $\NeqD{2}{n}$ \\
1 & 3 & 3 & 1 & 1 & 2 & 5 & 13 & 35 & 96 & 267 & 750 & 2123 & 6046 & 17303 & 49721 & 143365 & $\NeqL{3}{n}$ \\
1 & 4 & 4 & 1 & 1 & 2 & 5 & 14 & 41 & 124 & 384 & 1210 & 3865 & 12482 & 40677 & 133572 & 441468 & $\NeqL{4}{n}$ \\
2 & 3 & 6 & 1 & 1 & 2 & 5 & 14 & 41 & 124 & 384 & 1210 & 3865 & 12482 & 40677 & 133572 & 441468 & ? \\
1 & 2 & 3 & 1 & 1 & 2 & 5 & 14 & 42 & 128 & 390 & 1185 & 3586 & 10862 & 32929 & 99883 & 303000 & ? \\
1 & 1 & 3 & 1 & 1 & 2 & 5 & 14 & 42 & 129 & 398 & 1223 & 3752 & 11510 & 35305 & 108217 & 331434 & $\NeqD{3}{n}$ \\
1 & 2 & 4 & 1 & 1 & 2 & 5 & 14 & 42 & 131 & 420 & 1374 & 4561 & 15306 & 51793 & 176404 & 603990 & ? \\
1 & 5 & 5 & 1 & 1 & 2 & 5 & 14 & 42 & 131 & 420 & 1375 & 4576 & 15431 & 52603 & 180957 & 627340 & $\NeqL{5}{n}$ \\
1 & 6 & 6 & 1 & 1 & 2 & 5 & 14 & 42 & 132 & 428 & 1420 & 4796 & 16432 & 56966 & 199444 & 704146 & $\NeqL{6}{n}$ \\
2 & 5 & 10 & 1 & 1 & 2 & 5 & 14 & 42 & 132 & 428 & 1420 & 4796 & 16432 & 56966 & 199444 & 704146 & ? \\
3 & 4 & 12 & 1 & 1 & 2 & 5 & 14 & 42 & 132 & 428 & 1420 & 4796 & 16432 & 56966 & 199444 & 704146 & ? \\
1 & 3 & 4 & 1 & 1 & 2 & 5 & 14 & 42 & 132 & 429 & 1425 & 4807 & 16402 & 56472 & 195860 & 683420 & ? \\
1 & 1 & 4 & 1 & 1 & 2 & 5 & 14 & 42 & 132 & 429 & 1429 & 4849 & 16689 & 58074 & 203839 & 720429 & $\NeqD{4}{n}$ \\
1 & 3 & 6 & 1 & 1 & 2 & 5 & 14 & 42 & 132 & 429 & 1429 & 4851 & 16718 & 58331 & 205631 & 731257 & ? \\
1 & 7 & 7 & 1 & 1 & 2 & 5 & 14 & 42 & 132 & 429 & 1429 & 4851 & 16718 & 58331 & 205632 & 731272 & $\NeqL{7}{n}$ \\
1 & 2 & 6 & 1 & 1 & 2 & 5 & 14 & 42 & 132 & 429 & 1430 & 4861 & 16784 & 58695 & 207450 & 739810 & ? \\
1 & 4 & 6 & 1 & 1 & 2 & 5 & 14 & 42 & 132 & 429 & 1430 & 4861 & 16784 & 58695 & 207452 & 739839 & ? \\
1 & 8 & 8 & 1 & 1 & 2 & 5 & 14 & 42 & 132 & 429 & 1430 & 4861 & 16784 & 58695 & 207452 & 739840 & $\NeqL{8}{n}$ \\
2 & 7 & 14 & 1 & 1 & 2 & 5 & 14 & 42 & 132 & 429 & 1430 & 4861 & 16784 & 58695 & 207452 & 739840 & ? \\
1 & 4 & 5 & 1 & 1 & 2 & 5 & 14 & 42 & 132 & 429 & 1430 & 4862 & 16790 & 58708 & 207382 & 738815 & ? \\
1 & 1 & 5 & 1 & 1 & 2 & 5 & 14 & 42 & 132 & 429 & 1430 & 4862 & 16795 & 58773 & 207906 & 742203 & $\NeqD{5}{n}$ \\
1 & 2 & 5 & 1 & 1 & 2 & 5 & 14 & 42 & 132 & 429 & 1430 & 4862 & 16795 & 58773 & 207907 & 742219 & ? \\
1 & 4 & 8 & 1 & 1 & 2 & 5 & 14 & 42 & 132 & 429 & 1430 & 4862 & 16795 & 58773 & 207907 & 742220 & ? \\
1 & 9 & 9 & 1 & 1 & 2 & 5 & 14 & 42 & 132 & 429 & 1430 & 4862 & 16795 & 58773 & 207907 & 742220 & $\NeqL{9}{n}$ \\
2 & 3 & 12 & 1 & 1 & 2 & 5 & 14 & 42 & 132 & 429 & 1430 & 4862 & 16795 & 58773 & 207907 & 742220 & ? \\
1 & 10 & 10 & 1 & 1 & 2 & 5 & 14 & 42 & 132 & 429 & 1430 & 4862 & 16796 & 58785 & 207998 & 742780 & $\NeqL{10}{n}$ \\
1 & 5 & 6 & 1 & 1 & 2 & 5 & 14 & 42 & 132 & 429 & 1430 & 4862 & 16796 & 58786 & 208005 & 742795 & ? \\
1 & 1 & 6 & 1 & 1 & 2 & 5 & 14 & 42 & 132 & 429 & 1430 & 4862 & 16796 & 58786 & 208011 & 742885 & $\NeqD{6}{n}$ \\
1 & 2 & 8 & 1 & 1 & 2 & 5 & 14 & 42 & 132 & 429 & 1430 & 4862 & 16796 & 58786 & 208011 & 742885 & ? \\
1 & 3 & 9 & 1 & 1 & 2 & 5 & 14 & 42 & 132 & 429 & 1430 & 4862 & 16796 & 58786 & 208011 & 742885 & ? \\
1 & 5 & 10 & 1 & 1 & 2 & 5 & 14 & 42 & 132 & 429 & 1430 & 4862 & 16796 & 58786 & 208011 & 742885 & ? \\
1 & 6 & 8 & 1 & 1 & 2 & 5 & 14 & 42 & 132 & 429 & 1430 & 4862 & 16796 & 58786 & 208011 & 742885 & ? \\
1 & 6 & 9 & 1 & 1 & 2 & 5 & 14 & 42 & 132 & 429 & 1430 & 4862 & 16796 & 58786 & 208011 & 742885 & ? \\
1 & 11 & 11 & 1 & 1 & 2 & 5 & 14 & 42 & 132 & 429 & 1430 & 4862 & 16796 & 58786 & 208011 & 742885 & $\NeqL{11}{n}$ \\
1 & 12 & 12 & 1 & 1 & 2 & 5 & 14 & 42 & 132 & 429 & 1430 & 4862 & 16796 & 58786 & 208012 & 742899 & $\NeqL{12}{n}$ \\
1 & 1 & 7 & 1 & 1 & 2 & 5 & 14 & 42 & 132 & 429 & 1430 & 4862 & 16796 & 58786 & 208012 & 742900 & $\NeqD{7}{n}$ \\
1 & 1 & 8 & 1 & 1 & 2 & 5 & 14 & 42 & 132 & 429 & 1430 & 4862 & 16796 & 58786 & 208012 & 742900 & $\NeqD{8}{n}$ \\
1 & 1 & 9 & 1 & 1 & 2 & 5 & 14 & 42 & 132 & 429 & 1430 & 4862 & 16796 & 58786 & 208012 & 742900 & $\NeqD{9}{n}$ \\
1 & 1 & 10 & 1 & 1 & 2 & 5 & 14 & 42 & 132 & 429 & 1430 & 4862 & 16796 & 58786 & 208012 & 742900 & $\NeqD{10}{n}$ \\
1 & 1 & 11 & 1 & 1 & 2 & 5 & 14 & 42 & 132 & 429 & 1430 & 4862 & 16796 & 58786 & 208012 & 742900 & $\NeqD{11}{n}$ \\
1 & 1 & 12 & 1 & 1 & 2 & 5 & 14 & 42 & 132 & 429 & 1430 & 4862 & 16796 & 58786 & 208012 & 742900 & $\NeqD{12}{n}$ \\
1 & 1 & 13 & 1 & 1 & 2 & 5 & 14 & 42 & 132 & 429 & 1430 & 4862 & 16796 & 58786 & 208012 & 742900 & $\NeqD{13}{n}$ \\
1 & 1 & 14 & 1 & 1 & 2 & 5 & 14 & 42 & 132 & 429 & 1430 & 4862 & 16796 & 58786 & 208012 & 742900 & $\NeqD{14}{n}$ \\
1 & 2 & 7 & 1 & 1 & 2 & 5 & 14 & 42 & 132 & 429 & 1430 & 4862 & 16796 & 58786 & 208012 & 742900 & ? \\
1 & 2 & 9 & 1 & 1 & 2 & 5 & 14 & 42 & 132 & 429 & 1430 & 4862 & 16796 & 58786 & 208012 & 742900 & ? \\
1 & 2 & 10 & 1 & 1 & 2 & 5 & 14 & 42 & 132 & 429 & 1430 & 4862 & 16796 & 58786 & 208012 & 742900 & ? \\
1 & 2 & 11 & 1 & 1 & 2 & 5 & 14 & 42 & 132 & 429 & 1430 & 4862 & 16796 & 58786 & 208012 & 742900 & ? \\
1 & 2 & 12 & 1 & 1 & 2 & 5 & 14 & 42 & 132 & 429 & 1430 & 4862 & 16796 & 58786 & 208012 & 742900 & ? \\
1 & 2 & 13 & 1 & 1 & 2 & 5 & 14 & 42 & 132 & 429 & 1430 & 4862 & 16796 & 58786 & 208012 & 742900 & ? \\
1 & 2 & 14 & 1 & 1 & 2 & 5 & 14 & 42 & 132 & 429 & 1430 & 4862 & 16796 & 58786 & 208012 & 742900 & ? \\
1 & 3 & 7 & 1 & 1 & 2 & 5 & 14 & 42 & 132 & 429 & 1430 & 4862 & 16796 & 58786 & 208012 & 742900 & ? \\
1 & 3 & 8 & 1 & 1 & 2 & 5 & 14 & 42 & 132 & 429 & 1430 & 4862 & 16796 & 58786 & 208012 & 742900 & ? \\
1 & 3 & 10 & 1 & 1 & 2 & 5 & 14 & 42 & 132 & 429 & 1430 & 4862 & 16796 & 58786 & 208012 & 742900 & ? \\
1 & 3 & 11 & 1 & 1 & 2 & 5 & 14 & 42 & 132 & 429 & 1430 & 4862 & 16796 & 58786 & 208012 & 742900 & ? \\
1 & 3 & 12 & 1 & 1 & 2 & 5 & 14 & 42 & 132 & 429 & 1430 & 4862 & 16796 & 58786 & 208012 & 742900 & ? \\
1 & 3 & 13 & 1 & 1 & 2 & 5 & 14 & 42 & 132 & 429 & 1430 & 4862 & 16796 & 58786 & 208012 & 742900 & ? \\
1 & 3 & 14 & 1 & 1 & 2 & 5 & 14 & 42 & 132 & 429 & 1430 & 4862 & 16796 & 58786 & 208012 & 742900 & ? \\
1 & 4 & 9 & 1 & 1 & 2 & 5 & 14 & 42 & 132 & 429 & 1430 & 4862 & 16796 & 58786 & 208012 & 742900 & ? \\
1 & 4 & 10 & 1 & 1 & 2 & 5 & 14 & 42 & 132 & 429 & 1430 & 4862 & 16796 & 58786 & 208012 & 742900 & ? \\
1 & 4 & 12 & 1 & 1 & 2 & 5 & 14 & 42 & 132 & 429 & 1430 & 4862 & 16796 & 58786 & 208012 & 742900 & ? \\
1 & 4 & 13 & 1 & 1 & 2 & 5 & 14 & 42 & 132 & 429 & 1430 & 4862 & 16796 & 58786 & 208012 & 742900 & ? \\
1 & 4 & 14 & 1 & 1 & 2 & 5 & 14 & 42 & 132 & 429 & 1430 & 4862 & 16796 & 58786 & 208012 & 742900 & ? \\
1 & 5 & 8 & 1 & 1 & 2 & 5 & 14 & 42 & 132 & 429 & 1430 & 4862 & 16796 & 58786 & 208012 & 742900 & ? \\
1 & 5 & 11 & 1 & 1 & 2 & 5 & 14 & 42 & 132 & 429 & 1430 & 4862 & 16796 & 58786 & 208012 & 742900 & ? \\
1 & 5 & 12 & 1 & 1 & 2 & 5 & 14 & 42 & 132 & 429 & 1430 & 4862 & 16796 & 58786 & 208012 & 742900 & ? \\
1 & 5 & 13 & 1 & 1 & 2 & 5 & 14 & 42 & 132 & 429 & 1430 & 4862 & 16796 & 58786 & 208012 & 742900 & ? \\
1 & 6 & 7 & 1 & 1 & 2 & 5 & 14 & 42 & 132 & 429 & 1430 & 4862 & 16796 & 58786 & 208012 & 742900 & ? \\
1 & 6 & 10 & 1 & 1 & 2 & 5 & 14 & 42 & 132 & 429 & 1430 & 4862 & 16796 & 58786 & 208012 & 742900 & ? \\
1 & 6 & 12 & 1 & 1 & 2 & 5 & 14 & 42 & 132 & 429 & 1430 & 4862 & 16796 & 58786 & 208012 & 742900 & ? \\
1 & 6 & 13 & 1 & 1 & 2 & 5 & 14 & 42 & 132 & 429 & 1430 & 4862 & 16796 & 58786 & 208012 & 742900 & ? \\
1 & 6 & 14 & 1 & 1 & 2 & 5 & 14 & 42 & 132 & 429 & 1430 & 4862 & 16796 & 58786 & 208012 & 742900 & ? \\
1 & 7 & 8 & 1 & 1 & 2 & 5 & 14 & 42 & 132 & 429 & 1430 & 4862 & 16796 & 58786 & 208012 & 742900 & ? \\
1 & 7 & 11 & 1 & 1 & 2 & 5 & 14 & 42 & 132 & 429 & 1430 & 4862 & 16796 & 58786 & 208012 & 742900 & ? \\
1 & 7 & 12 & 1 & 1 & 2 & 5 & 14 & 42 & 132 & 429 & 1430 & 4862 & 16796 & 58786 & 208012 & 742900 & ? \\
1 & 7 & 14 & 1 & 1 & 2 & 5 & 14 & 42 & 132 & 429 & 1430 & 4862 & 16796 & 58786 & 208012 & 742900 & ? \\
1 & 8 & 9 & 1 & 1 & 2 & 5 & 14 & 42 & 132 & 429 & 1430 & 4862 & 16796 & 58786 & 208012 & 742900 & ? \\
1 & 8 & 10 & 1 & 1 & 2 & 5 & 14 & 42 & 132 & 429 & 1430 & 4862 & 16796 & 58786 & 208012 & 742900 & ? \\
1 & 8 & 12 & 1 & 1 & 2 & 5 & 14 & 42 & 132 & 429 & 1430 & 4862 & 16796 & 58786 & 208012 & 742900 & ? \\
1 & 8 & 14 & 1 & 1 & 2 & 5 & 14 & 42 & 132 & 429 & 1430 & 4862 & 16796 & 58786 & 208012 & 742900 & ? \\
1 & 9 & 10 & 1 & 1 & 2 & 5 & 14 & 42 & 132 & 429 & 1430 & 4862 & 16796 & 58786 & 208012 & 742900 & ? \\
1 & 9 & 12 & 1 & 1 & 2 & 5 & 14 & 42 & 132 & 429 & 1430 & 4862 & 16796 & 58786 & 208012 & 742900 & ? \\
1 & 9 & 14 & 1 & 1 & 2 & 5 & 14 & 42 & 132 & 429 & 1430 & 4862 & 16796 & 58786 & 208012 & 742900 & ? \\
1 & 10 & 11 & 1 & 1 & 2 & 5 & 14 & 42 & 132 & 429 & 1430 & 4862 & 16796 & 58786 & 208012 & 742900 & ? \\
1 & 10 & 12 & 1 & 1 & 2 & 5 & 14 & 42 & 132 & 429 & 1430 & 4862 & 16796 & 58786 & 208012 & 742900 & ? \\
1 & 10 & 14 & 1 & 1 & 2 & 5 & 14 & 42 & 132 & 429 & 1430 & 4862 & 16796 & 58786 & 208012 & 742900 & ? \\
1 & 11 & 12 & 1 & 1 & 2 & 5 & 14 & 42 & 132 & 429 & 1430 & 4862 & 16796 & 58786 & 208012 & 742900 & ? \\
1 & 12 & 13 & 1 & 1 & 2 & 5 & 14 & 42 & 132 & 429 & 1430 & 4862 & 16796 & 58786 & 208012 & 742900 & ? \\
1 & 12 & 14 & 1 & 1 & 2 & 5 & 14 & 42 & 132 & 429 & 1430 & 4862 & 16796 & 58786 & 208012 & 742900 & ? \\
1 & 13 & 13 & 1 & 1 & 2 & 5 & 14 & 42 & 132 & 429 & 1430 & 4862 & 16796 & 58786 & 208012 & 742900 & $\NeqL{13}{n}$ \\
1 & 13 & 14 & 1 & 1 & 2 & 5 & 14 & 42 & 132 & 429 & 1430 & 4862 & 16796 & 58786 & 208012 & 742900 & ? \\
1 & 14 & 14 & 1 & 1 & 2 & 5 & 14 & 42 & 132 & 429 & 1430 & 4862 & 16796 & 58786 & 208012 & 742900 & $\NeqL{14}{n}$ \\
\end{supertabular}
\end{footnotesize}
\end{outdent}

\newpage
\tablefirsthead{%
\toprule
$u$ & $v$ & $w$ & $n = {}$1 & 2 & 3 & 4 & 5 & 6 & 7 & 8 & 9 & 10 & 11 & 12 & 13 & 14 \\
\midrule
}
\tablehead{%
\toprule
\multicolumn{17}{l}{\textit{Continued from the previous page.}} \\
\midrule
$u$ & $v$ & $w$ & $n = {}$1 & 2 & 3 & 4 & 5 & 6 & 7 & 8 & 9 & 10 & 11 & 12 & 13 & 14 \\
\midrule
}
\tabletail{%
\midrule
\multicolumn{17}{r}{\textit{Continued on the next page.}} \\
\bottomrule
}
\tablelasttail{\bottomrule}
\tablecaption{$T_{\Lambda,n}$ with $\Lambda = \ZZ(u,v) + \ZZ(w,0)$.}
\label{tab:grid-eq}
\begin{outdent}
\begin{footnotesize}
\begin{supertabular}{rrr*{14}{r}l}
0 & 1 & 1 & 1 & 1 & 1 & 1 & 1 & 1 & 1 & 1 & 1 & 1 & 1 & 1 & 1 & 1 \\
0 & 2 & 1 & 1 & 1 & 2 & 4 & 8 & 16 & 32 & 64 & 128 & 256 & 512 & 1024 & 2048 & 4096 \\
0 & 3 & 1 & 1 & 1 & 2 & 5 & 13 & 35 & 96 & 267 & 750 & 2123 & 6046 & 17303 & 49721 & 143365 \\
0 & 4 & 1 & 1 & 1 & 2 & 5 & 14 & 41 & 124 & 384 & 1210 & 3865 & 12482 & 40677 & 133572 & 441468 \\
0 & 5 & 1 &  1 & 1 & 2 & 5 & 14 & 42 & 131 & 420 & 1375 & 4576 & 15431 & 52603 & 180957 & 627340 \\
0 & 1 & 2 & 1 & 1 & 2 & 4 & 8 & 16 & 32 & 64 & 128 & 256 & 512 & 1024 & 2048 & 4096 \\
1 & 1 & 2 & 1 & 1 & 2 & 5 & 10 & 21 & 42 & 85 & 170 & 341 & 682 & 1365 & 2730 & 5461 \\
0 & 2 & 2 & 1 & 1 & 2 & 5 & 13 & 35 & 96 & 267 & 750 & 2123 & 6046 & 17303 & 49721 & 143365 \\
1 & 2 & 2 & 1 & 1 & 2 & 5 & 14 & 42 & 131 & 420 & 1374 & 4561 & 15306 & 51793 & 176404 & 603990 \\
0 & 3 & 2 & 1 & 1 & 2 & 5 & 14 & 41 & 124 & 384 & 1210 & 3865 & 12482 & 40677 & 133572 & 441468 \\
1 & 3 & 2 & 1 & 1 & 2 & 5 & 14 & 42 & 132 & 429 & 1429 & 4851 & 16718 & 58331 & 205631 & 731257 \\
0 & 4 & 2 & 1 & 1 & 2 & 5 & 14 & 42 & 131 & 420 & 1375 & 4576 & 15431 & 52603 & 180957 & 627340 \\
1 & 4 & 2 & 1 & 1 & 2 & 5 & 14 & 42 & 132 & 429 & 1430 & 4862 & 16795 & 58773 & 207907 & 742220 \\
0 & 5 & 2 & 1 & 1 & 2 & 5 & 14 & 42 & 132 & 428 & 1420 & 4796 & 16432 & 56966 & 199444 & 704146 \\
1 & 5 & 2 & 1 & 1 & 2 & 5 & 14 & 42 & 132 & 429 & 1430 & 4862 & 16796 & 58786 & 208011 & 742885 \\
0 & 1 & 3 & 1 & 1 & 2 & 5 & 13 & 35 & 96 & 267 & 750 & 2123 & 6046 & 17303 & 49721 & 143365 \\
1 & 1 & 3 & 1 & 1 & 2 & 5 & 14 & 42 & 128 & 390 & 1185 & 3586 & 10862 & 32929 & 99883 & 303000 \\
2 & 1 & 3 & 1 & 1 & 2 & 5 & 14 & 42 & 129 & 398 & 1223 & 3752 & 11510 & 35305 & 108217 & 331434 \\
0 & 2 & 3 & 1 & 1 & 2 & 5 & 14 & 41 & 124 & 384 & 1210 & 3865 & 12482 & 40677 & 133572 & 441468 \\
1 & 2 & 3 & 1 & 1 & 2 & 5 & 14 & 42 & 132 & 429 & 1430 & 4861 & 16784 & 58695 & 207452 & 739839 \\
2 & 2 & 3 & 1 & 1 & 2 & 5 & 14 & 42 & 132 & 429 & 1430 & 4861 & 16784 & 58695 & 207450 & 739810 \\
0 & 3 & 3 & 1 & 1 & 2 & 5 & 14 & 42 & 131 & 420 & 1375 & 4576 & 15431 & 52603 & 180957 & 627340 \\
1 & 3 & 3 & 1 & 1 & 2 & 5 & 14 & 42 & 132 & 429 & 1430 & 4862 & 16796 & 58786 & 208011 & 742885 \\
2 & 3 & 3 & 1 & 1 & 2 & 5 & 14 & 42 & 132 & 429 & 1430 & 4862 & 16796 & 58786 & 208011 & 742885 \\
0 & 4 & 3 & 1 & 1 & 2 & 5 & 14 & 42 & 132 & 428 & 1420 & 4796 & 16432 & 56966 & 199444 & 704146 \\
1 & 4 & 3 & 1 & 1 & 2 & 5 & 14 & 42 & 132 & 429 & 1430 & 4862 & 16796 & 58786 & 208012 & 742900 \\
2 & 4 & 3 & 1 & 1 & 2 & 5 & 14 & 42 & 132 & 429 & 1430 & 4862 & 16796 & 58786 & 208012 & 742900 \\
0 & 5 & 3 & 1 & 1 & 2 & 5 & 14 & 42 & 132 & 429 & 1429 & 4851 & 16718 & 58331 & 205632 & 731272 \\
1 & 5 & 3 & 1 & 1 & 2 & 5 & 14 & 42 & 132 & 429 & 1430 & 4862 & 16796 & 58786 & 208012 & 742900 \\
2 & 5 & 3 & 1 & 1 & 2 & 5 & 14 & 42 & 132 & 429 & 1430 & 4862 & 16796 & 58786 & 208012 & 742900 \\
0 & 1 & 4 & 1 & 1 & 2 & 5 & 14 & 41 & 124 & 384 & 1210 & 3865 & 12482 & 40677 & 133572 & 441468 \\
1 & 1 & 4 & 1 & 1 & 2 & 5 & 14 & 42 & 132 & 429 & 1425 & 4807 & 16402 & 56472 & 195860 & 683420 \\
2 & 1 & 4 & 1 & 1 & 2 & 5 & 14 & 42 & 131 & 420 & 1374 & 4561 & 15306 & 51793 & 176404 & 603990 \\
3 & 1 & 4 & 1 & 1 & 2 & 5 & 14 & 42 & 132 & 429 & 1429 & 4849 & 16689 & 58074 & 203839 & 720429 \\
0 & 2 & 4 & 1 & 1 & 2 & 5 & 14 & 42 & 131 & 420 & 1375 & 4576 & 15431 & 52603 & 180957 & 627340 \\
1 & 2 & 4 & 1 & 1 & 2 & 5 & 14 & 42 & 132 & 429 & 1430 & 4862 & 16796 & 58786 & 208011 & 742885 \\
2 & 2 & 4 & 1 & 1 & 2 & 5 & 14 & 42 & 132 & 429 & 1429 & 4851 & 16718 & 58331 & 205631 & 731257 \\
3 & 2 & 4 & 1 & 1 & 2 & 5 & 14 & 42 & 132 & 429 & 1430 & 4862 & 16796 & 58786 & 208011 & 742885 \\
0 & 3 & 4 & 1 & 1 & 2 & 5 & 14 & 42 & 132 & 428 & 1420 & 4796 & 16432 & 56966 & 199444 & 704146 \\
1 & 3 & 4 & 1 & 1 & 2 & 5 & 14 & 42 & 132 & 429 & 1430 & 4862 & 16796 & 58786 & 208012 & 742900 \\
2 & 3 & 4 & 1 & 1 & 2 & 5 & 14 & 42 & 132 & 429 & 1430 & 4862 & 16795 & 58773 & 207907 & 742220 \\
3 & 3 & 4 & 1 & 1 & 2 & 5 & 14 & 42 & 132 & 429 & 1430 & 4862 & 16796 & 58786 & 208012 & 742900 \\
0 & 4 & 4 & 1 & 1 & 2 & 5 & 14 & 42 & 132 & 429 & 1429 & 4851 & 16718 & 58331 & 205632 & 731272 \\
1 & 4 & 4 & 1 & 1 & 2 & 5 & 14 & 42 & 132 & 429 & 1430 & 4862 & 16796 & 58786 & 208012 & 742900 \\
2 & 4 & 4 & 1 & 1 & 2 & 5 & 14 & 42 & 132 & 429 & 1430 & 4862 & 16796 & 58786 & 208011 & 742885 \\
3 & 4 & 4 & 1 & 1 & 2 & 5 & 14 & 42 & 132 & 429 & 1430 & 4862 & 16796 & 58786 & 208012 & 742900 \\
0 & 5 & 4 & 1 & 1 & 2 & 5 & 14 & 42 & 132 & 429 & 1430 & 4861 & 16784 & 58695 & 207452 & 739840 \\
1 & 5 & 4 & 1 & 1 & 2 & 5 & 14 & 42 & 132 & 429 & 1430 & 4862 & 16796 & 58786 & 208012 & 742900 \\
2 & 5 & 4 & 1 & 1 & 2 & 5 & 14 & 42 & 132 & 429 & 1430 & 4862 & 16796 & 58786 & 208012 & 742900 \\
3 & 5 & 4 & 1 & 1 & 2 & 5 & 14 & 42 & 132 & 429 & 1430 & 4862 & 16796 & 58786 & 208012 & 742900 \\
0 & 1 & 5 & 1 & 1 & 2 & 5 & 14 & 42 & 131 & 420 & 1375 & 4576 & 15431 & 52603 & 180957 & 627340 \\
1 & 1 & 5 & 1 & 1 & 2 & 5 & 14 & 42 & 132 & 429 & 1430 & 4862 & 16790 & 58708 & 207382 & 738815 \\
2 & 1 & 5 & 1 & 1 & 2 & 5 & 14 & 42 & 132 & 429 & 1430 & 4862 & 16795 & 58773 & 207907 & 742219 \\
3 & 1 & 5 & 1 & 1 & 2 & 5 & 14 & 42 & 132 & 429 & 1430 & 4862 & 16795 & 58773 & 207907 & 742219 \\
4 & 1 & 5 & 1 & 1 & 2 & 5 & 14 & 42 & 132 & 429 & 1430 & 4862 & 16795 & 58773 & 207906 & 742203 \\
0 & 2 & 5 & 1 & 1 & 2 & 5 & 14 & 42 & 132 & 428 & 1420 & 4796 & 16432 & 56966 & 199444 & 704146 \\
1 & 2 & 5 & 1 & 1 & 2 & 5 & 14 & 42 & 132 & 429 & 1430 & 4862 & 16796 & 58786 & 208012 & 742900 \\
2 & 2 & 5 & 1 & 1 & 2 & 5 & 14 & 42 & 132 & 429 & 1430 & 4862 & 16796 & 58786 & 208012 & 742900 \\
3 & 2 & 5 & 1 & 1 & 2 & 5 & 14 & 42 & 132 & 429 & 1430 & 4862 & 16796 & 58786 & 208012 & 742900 \\
4 & 2 & 5 & 1 & 1 & 2 & 5 & 14 & 42 & 132 & 429 & 1430 & 4862 & 16796 & 58786 & 208012 & 742900 \\
0 & 3 & 5 & 1 & 1 & 2 & 5 & 14 & 42 & 132 & 429 & 1429 & 4851 & 16718 & 58331 & 205632 & 731272 \\
1 & 3 & 5 & 1 & 1 & 2 & 5 & 14 & 42 & 132 & 429 & 1430 & 4862 & 16796 & 58786 & 208012 & 742900 \\
2 & 3 & 5 & 1 & 1 & 2 & 5 & 14 & 42 & 132 & 429 & 1430 & 4862 & 16796 & 58786 & 208012 & 742900 \\
3 & 3 & 5 & 1 & 1 & 2 & 5 & 14 & 42 & 132 & 429 & 1430 & 4862 & 16796 & 58786 & 208012 & 742900 \\
4 & 3 & 5 & 1 & 1 & 2 & 5 & 14 & 42 & 132 & 429 & 1430 & 4862 & 16796 & 58786 & 208012 & 742900 \\
0 & 4 & 5 & 1 & 1 & 2 & 5 & 14 & 42 & 132 & 429 & 1430 & 4861 & 16784 & 58695 & 207452 & 739840 \\
1 & 4 & 5 & 1 & 1 & 2 & 5 & 14 & 42 & 132 & 429 & 1430 & 4862 & 16796 & 58786 & 208012 & 742900 \\
2 & 4 & 5 & 1 & 1 & 2 & 5 & 14 & 42 & 132 & 429 & 1430 & 4862 & 16796 & 58786 & 208012 & 742900 \\
3 & 4 & 5 & 1 & 1 & 2 & 5 & 14 & 42 & 132 & 429 & 1430 & 4862 & 16796 & 58786 & 208012 & 742900 \\
4 & 4 & 5 & 1 & 1 & 2 & 5 & 14 & 42 & 132 & 429 & 1430 & 4862 & 16796 & 58786 & 208012 & 742900 \\
0 & 5 & 5 & 1 & 1 & 2 & 5 & 14 & 42 & 132 & 429 & 1430 & 4862 & 16795 & 58773 & 207907 & 742220 \\
1 & 5 & 5 & 1 & 1 & 2 & 5 & 14 & 42 & 132 & 429 & 1430 & 4862 & 16796 & 58786 & 208012 & 742900 \\
2 & 5 & 5 & 1 & 1 & 2 & 5 & 14 & 42 & 132 & 429 & 1430 & 4862 & 16796 & 58786 & 208012 & 742900 \\
3 & 5 & 5 & 1 & 1 & 2 & 5 & 14 & 42 & 132 & 429 & 1430 & 4862 & 16796 & 58786 & 208012 & 742900 \\
4 & 5 & 5 & 1 & 1 & 2 & 5 & 14 & 42 & 132 & 429 & 1430 & 4862 & 16796 & 58786 & 208012 & 742900 \\
\end{supertabular}
\end{footnotesize}
\end{outdent}

%%%%%%%%%%%%%%%%%%%%%%%%%%%%%%%%%%%%%%%%%%%%%%%%%%

\end{document}